\documentclass[a4paper,10pt]{elsarticle}

\newif\ifusetikzexternalize
\usetikzexternalizetrue

\usepackage{amsmath,amsthm,amssymb,scalerel}
\usepackage{bm}
\usepackage{mathtools}

\usepackage{thmtools, thm-restate}

\declaretheorem{corollary}
\declaretheorem{remark}

\newcommand\mtext[1]{\text{#1}}
\newcommand{\+}[1]{\boldsymbol{\ensuremath{\mathbf{#1}}}}

\newcommand\roundpar[1]{\left( #1 \right)}
\newcommand\squarepar[1]{\left[ #1 \right]}

\newcommand\oneDIntegral[4]{\int_{#1}^{#2} #3 \hspace{1ex} d#4}
\newcommand\integral[3]{\int_{#1} #2 \hspace{1ex} d#3}

\newcommand\setextrusion[1]{{\omega(\mathcal{#1})}}

\newcommand\defeq{\mathrel{\mathop:}=}

\newcommand\half{\frac{1}{2}}

\usepackage{stackengine}
\DeclareFontFamily{U}{mathx}{\hyphenchar\font45}
\DeclareFontShape{U}{mathx}{m}{n}{
      <5> <6> <7> <8> <9> <10>
      <10.95> <12> <14.4> <17.28> <20.74> <24.88>
      mathx10
      }{}
\DeclareSymbolFont{mathx}{U}{mathx}{m}{n}
\DeclareMathAccent{\widecheck}    {0}{mathx}{"71}
\newcommand\approximate[1]{\widecheck{#1}}

\newcommand\stackrelwidth[2]{\stackrel{\mathmakebox[\widthof{#2}]{#1}}{#2}}

\def\connectionc{c{(f,h)}}

\def\onefluid{one-velocity }
\def\twofluid{two-velocity }
\def\Onefluid{One-velocity }
\def\Twofluid{Two-velocity }

\def\stagger{\widetilde}
\def\volfrac{\alpha}
\def\volfracstag{\stagger\alpha}
\def\orientation{o}

\def\tvar{T}
\def\avar{\varphi}
\def\avarstag{\stagger\varphi}

\def\onevelo{\bar{u}}
\def\Onevelo{\bar{\+u}}
\def\volumeflux{\upsilon}
\def\volumefluxOne{\bar\volumeflux}
\def\volumefluxOneStag{\stagger{\bar\volumeflux}}

\def\volumefluxTwo{\widehat\volumeflux}

\def\totalfluxctu{V}

\def\massflux{m}
\def\massfluxOne{\bar{m}}
\def\massfluxOneStag{\stagger{\bar{m}}}
\def\massfluxTwo{\widehat{m}}
\def\dt{\delta t}

\newcommand\dr{{\Delta}}
\newcommand\drOne{{\bar\dr}}
\newcommand\drOneStag{\stagger{\bar\dr}}
\newcommand\drTwo{{\widehat\dr}}

\newcommand\interpolanthalf{\mathfrak{E}}
\newcommand\interpolanthalfstag{\stagger{\interpolanthalf}}
\newcommand\interpolantsbp{\mathfrak{I}}
\newcommand\interpolantflux{\stagger{\interpolantsbp}}
\newcommand\fluxinterp{{\mathfrak{F}}}
\newcommand\fluxinterpstag{\stagger{\mathfrak{F}}}
\newcommand\tanginterp{\stagger{\mathfrak{U}}}

\newcommand\flowmap[2]{\Phi^{#1}_{#2}}

\newcommand\gradient{\nabla}
\newcommand\divergence{\gradient \cdot}

\newcommand\advection[2]{\mathfrak{A}[#1]#2}
\newcommand\advectionstag[2]{\stagger{\mathfrak{A}}[#1]#2}
\newcommand\divh{\mathfrak{D}}
\newcommand\gradh{\mathfrak{S}}

\newcommand\abs[1]{\left\lvert #1 \right\rvert}


\newcommand\ratio[1]{#1^g/#1^l}

\newcommand\mathcfl{\nu}

\newcommand\posflux[1]{\squarepar{#1}^+}
\newcommand\negflux[1]{\squarepar{#1}^-}
\newcommand\minfracval{\volfrac^\dagger}
\newcommand\wycflval{\nu^\dagger}

\usepackage{stmaryrd}

\newcommand\jump[1]{\left\llbracket#1\right\rrbracket}


\newcommand\mean[1]{\dgal*{#1}}
\usepackage{xparse}
\NewDocumentCommand{\dgal}{sO{}m}{%
  \IfBooleanTF{#1}
    {\dgalext{#3}}
    {\dgalx[#2]{#3}}%
}

\NewDocumentCommand{\dgalext}{m}{%
  \sbox0{%
    \mathsurround=0pt 
    $\left\{\vphantom{#1}\right.\kern-\nulldelimiterspace$%
  }%
  \sbox2{\{}%
  \ifdim\ht0=\ht2
    \{\kern-.625\wd2 \{#1\}\kern-.625\wd2 \}%
  \else
    \left\{\kern-.7\wd0\left\{#1\right\}\kern-.7\wd0\right\}%
  \fi
}

\NewDocumentCommand{\dgalx}{om}{%
  \sbox0{\mathsurround=0pt$#1\{$}%
  \sbox2{\{}%
  \ifdim\ht0=\ht2
    \{\kern-.625\wd2 \{#2\}\kern-.625\wd2 \}%
  \else
    \mathopen{#1\{\kern-.7\wd0 #1\{}
    #2
    \mathclose{#1\}\kern-.7\wd0 #1\}}
  \fi
}

\newcommand\set[2]{
  \{\,#1 \mid #2\,\}
}

\usepackage{geometry}
\usepackage{graphicx}

\usepackage{tikz}
\usetikzlibrary{
  arrows.meta,
  positioning}

\ifusetikzexternalize
  \usetikzlibrary{
    arrows.meta,
    external}
  \tikzexternalize[prefix=pdf/]
  \tikzset{external/system call={pdflatex \tikzexternalcheckshellescape -halt-on-error
    -interaction=batchmode -jobname "\image" "\texsource" && 
    pdfseparate -f 1 -l 1 "\image".pdf "\image"_tmp.pdf &&
    mv "\image"_tmp.pdf "\image".pdf}}
  \tikzexternalize

  
\else

  
  \newcommand\tikzsetnextfilename[1]{}
  \newcommand\tikzexternalenable{}
  \newcommand\tikzexternaldisable{}
\fi

\usepackage[utf8]{inputenc}
\usepackage{pgfplots}
\usepackage{pgfgantt}
\usepackage{pdflscape}
\pgfplotsset{compat=newest}
\pgfplotsset{plot coordinates/math parser=false}
\pgfplotsset{
    legend image with text/.style={
        legend image code/.code={%
            \node[anchor=center] at (0.3cm,0cm) {#1};
        }
    },
}
\usetikzlibrary{
    matrix,
}
\pgfplotsset{
    compat=1.3,
}

\usepgfplotslibrary{fillbetween}

\newcommand\gls[1]{\MakeUppercase{#1}}
\newcommand\glsplural[1]{\MakeUppercase{#1}s}

\usepackage{subcaption}

\def\twofigwidth{0.475\textwidth}
\def\threefigwidth{0.3\textwidth}


\setcounter{tocdepth}{3}

\usepackage[]{natbib}
\bibliographystyle{apalike}

\usepackage[hidelinks]{hyperref}
\usepackage{cleveref}
\usepackage{autonum} 
\newcommand\caref[1]{eq.~\eqref{#1}} 
\usepackage{import}
\usepackage{stackengine}

\usepackage{booktabs}
\usepackage{array}
\usepackage{varwidth} 
\usepackage{multirow}

\usepackage{enumitem}
\newtheorem{challenge}{Challenges}
\crefname{challenge}{challenge}{challenges}
\newlist{chalenum}{enumerate}{1} 
\setlist[chalenum]{label=\arabic*), ref=\thechallenge.\arabic*}
\crefalias{chalenumi}{challenge}
\crefformat{appendix}{#2#1#3}

\begin{document}
  \begin{frontmatter}

    \author{
      Ronald A. Remmerswaal
    }
    \author{
      Arthur E.P. Veldman
    }

    \address{
      Bernoulli Institute, University of Groningen\\
      PO Box 407, 9700 AK Groningen, The Netherlands
    }

    \begin{abstract}
      The numerical modelling of convection dominated high density ratio two-phase flow poses several challenges, amongst which is resolving the relatively thin shear layer at the interface.
To this end we propose a sharp discretisation of the \twofluid model of the two-phase Navier--Stokes equations. 
This results in the ability to model the shear layer, rather than resolving it, by allowing for a velocity discontinuity in the direction(s) tangential to the interface.

In this paper we focus our attention on the transport of mass and momentum in the presence of such a velocity discontinuity.
We propose a generalisation of the dimensionally unsplit geometric volume of fluid (VOF) method for the advection of the interface in the \twofluid formulation.
Sufficient conditions on the construction of donating regions are derived that ensure boundedness of the volume fraction for dimensionally unsplit advection methods.
We propose to interpolate the mass fluxes resulting from the dimensionally unsplit geometric VOF method for the advection of the staggered momentum field, resulting in semi-discrete energy conservation.
Division of the momentum by the respective mass, to obtain the velocity, is not always well-defined for nearly empty control volumes and therefore care is taken in the construction of the momentum flux interpolant: our proposed flux interpolant guarantees that this division is always well-defined without being unnecessarily dissipative.

Besides the newly proposed \twofluid model we also detail our exactly conservative (mass per phase and total linear momentum) implementation of the \onefluid formulation of the two-phase Navier--Stokes equations, which will be used for comparison.

The discretisation methods are validated using classical time-reversible flow fields, where in this paper the advection is uncoupled from the Navier--Stokes solver, which will be developed in a later paper.

    \end{abstract}

    \title{Towards a sharp, structure preserving \twofluid model for two-phase flow: transport of mass and momentum}

    \begin{keyword}
      two-phase flow \sep volume of fluid \sep unresolved shear layer \sep velocity discontinuity
    \end{keyword} 
  \end{frontmatter}

  \section{Introduction}\label{sec:introduction}
The numerical simulation of two-phase flow is of great interest to engineering problems (e.g. liquid sloshing), as well as to fundamental research into fluid flow (e.g. to study the physics of liquid impacts).
Discretisation methods used for the transport of mass and momentum play a particularly crucial role, as an erroneous approximation quickly leads to an unphysical exchange of momentum between the phases.
For convection dominated high density ratio two-phase flow in particular, it is known that momentum should be conserved~\citep{Arrufat2021,Desmons2021,Owkes2017,Rudman1998} while it is transported, for obtaining a robust and accurate model.

The aforementioned references use methods based on the \onefluid formulation of the two-phase NSE, wherein the velocity field is assumed to be continuous at the interface
\begin{equation}\label{eqn:intro:full_smoothness}
  \jump{\+u} = 0.
\end{equation}
Here the jump in a quantity is defined as the difference between the gas and liquid value
\begin{equation}
  \jump{\avar} \defeq \avar^g - \avar^l, \quad \mean{\avar} \defeq \avar^g + \avar^l,
\end{equation}
where we have introduced a complimentary notation for the sum over the phases.
This \onefluid formulation results in a single set of equations defined in the entire computational domain $\Omega$, where the fluid properties (i.e. the density and dynamic viscosity) change when going from the liquid to the gas domain.
\begin{figure}
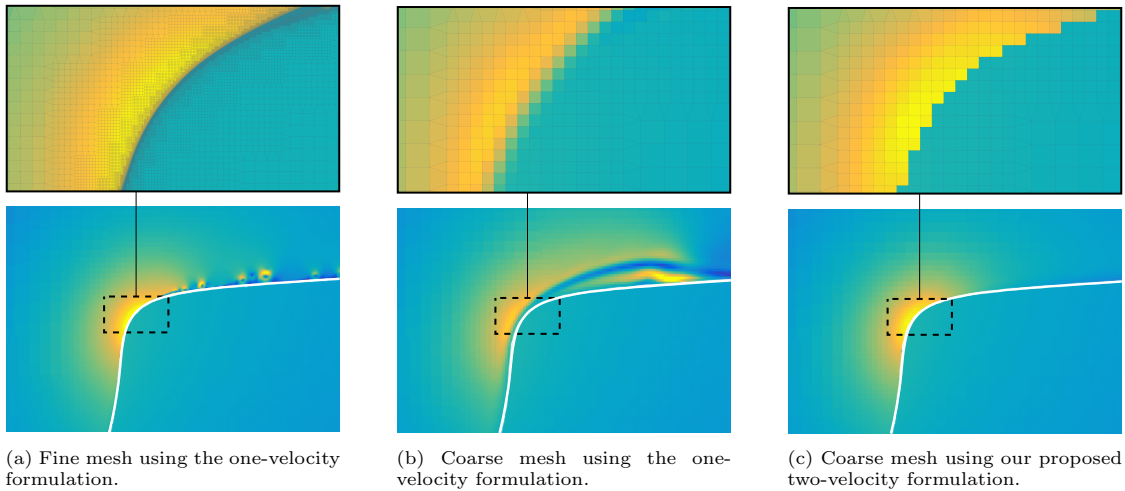

  \centering

  \subcaptionbox{Fine mesh using the \onefluid formulation.\label{fig:intro:lgpi:1fluid_fine}}
  [\threefigwidth]{
    \import{inkscape/}{intro_lgpi_left.pdf_tex}
  }
  \hfill
  \subcaptionbox{Coarse mesh using the \onefluid formulation.\label{fig:intro:lgpi:1fluid_coarse}}
  [\threefigwidth]{
    \import{inkscape/}{intro_lgpi_mid.pdf_tex}
  }
  \hfill
  \subcaptionbox{Coarse mesh using our proposed \twofluid formulation.\label{fig:intro:lgpi:2fluid_coarse}}
  [\threefigwidth]{
    \import{inkscape/}{intro_lgpi_right.pdf_tex}
  }
  \caption{Example simulation of the large gas-pocket impact as defined by~\citet{Etienne2018} at scale $1 : 5$ (thus resulting in a $\lambda=4$m long wave) and $t /T_g \approx 0.45$ (where $T_g = 2\pi / \sqrt{gk}$). 
  We show the velocity magnitude, clipped to $\abs{\+u}_2 \in [0, 8]$, at the wave crest and compare three different approximations. 
  The `fine' mesh yields a resolved shear layer with an interface resolution of $h \approx 0.1$mm whereas the `coarse' mesh refers to $h \approx 3.1$mm ($32$ times larger).
  Note that the colouring is determined per control volume (depending on whether the control volume centroid lies in the liquid or gas phase), and therefore the visible `staircase' approximation of the interface is entirely due to postprocessing.}
  \label{fig:intro:lgpi}
\end{figure}
The simulation of convection dominated high density ratio two-phase flow, however, can be quite challenging due to the formation of a viscous boundary layer at the interface, which we will refer to as a `shear layer'.
In~\cref{fig:intro:lgpi:1fluid_fine,fig:intro:lgpi:1fluid_coarse} we show two approximate solutions according to the \onefluid formulation on a fine as well as relatively coarse (but still fine enough to resolve the interface) mesh.
Perhaps unsurprisingly we find, due to the assumption of velocity continuity at the interface, that the thickness of the shear layer is determined by the mesh resolution until it is entirely resolved.
This implies that the dynamics in the lighter gas phase are incorrectly approximated until the shear layer is resolved, which raises the question of whether such a shear layer can successfully be modelled. 
Thus resulting in a numerical model for two-phase flow in which the shear layer need not be resolved, from which it follows that the resolution requirements become independent of viscous effects.

We will propose a numerical model for incompressible two-phase flow that is based on a \twofluid formulation of the two-phase NSE~\citep{Favrie2014,Preisig2010,Saurel1999}. 
This means that each phase gets its own momentum (and thus velocity) field, as well as the corresponding equation which governs its conservation.
Rather than imposing continuity of the velocity field, we now merely impose that the interface normal component of the velocity is continuous (i.e., there is no phase change)
\begin{equation}\label{eqn:intro:normal_smoothness}
  \jump{u_\eta} = 0, 
\end{equation}
where $\+\eta$ denotes the interface normal (pointing into the gas phase), and the interface normal component of the velocity is denoted by $u_\eta = \+\eta \cdot \+u$.
The resulting numerical model therefore aims at replacing the unresolved shear layer by a velocity discontinuity in the direction(s) tangential to the interface, as shown in~\cref{fig:intro:lgpi:2fluid_coarse}.

Several multi-equation models for two-phase flow can be found in the literature, resulting in at most two mass conservation equations, two momentum conservation equations, two energy equations and an equation that governs the advection of the interface.
In~\citet{Saurel1999} a hyperbolic seven equation model (2-2-2-1) is considered for the simulation of two-phase compressible flow.
The energy equations are omitted in~\citet{Preisig2010}, who therefore consider a five equation model (2-2-0-1), and in~\citet{Favrie2014} a six equation model (2-1-2-1) is considered where a momentum equation for the mixture is considered.

For incompressible two-phase flow not much can be found in the literature regarding multi-equation models (in particular two-velocity models).
Note that for incompressible flow the mass conservation equations are implied by the equation that governs the advection of the interface, together with a suitable incompressibility constraint (which we will count as a mass conservation equation).
In~\citet{Desjardins2010} and \citet{Vukcevic2017} two separate equations are used for the transport of momentum, but after each time step the velocity jump is removed in the pressure Poisson problem by imposing~\cref{eqn:intro:full_smoothness}. 
In our terminology this constitutes as a \onefluid model (1-1-0-1).
Our proposed \twofluid model is a sharp four equation model (1-2-0-1) wherein the velocity is permitted to be discontinuous according to~\cref{eqn:intro:normal_smoothness}.

The numerical modelling of such a \twofluid formulation poses many challenges, but we will focus our attention on the following three.
\begin{challenge}[\Twofluid formulation]\leavevmode
  In the presence of a velocity discontinuity, how to\ldots
  \begin{chalenum}
    \item transport mass and momentum\label{chal:mass_mom}
    \item impose continuity of the interface normal component of velocity\label{chal:continuity}
    \item ensure that the viscous \onefluid solution is obtained under mesh refinement\label{chal:refinement}
  \end{chalenum}
\end{challenge}

For now we will consider~\cref{chal:mass_mom}, which means that the two fluids are not yet coupled.
The coupling of the fluids, via the interface normal component of velocity, as well as viscous diffusion, is considered in a future paper where the entire Navier--Stokes solver will be detailed.

\begin{figure}
  \centering
  \import{inkscape/}{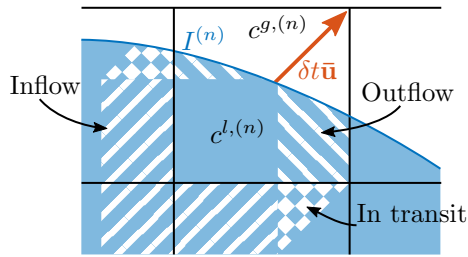}
  \caption{Illustration of~\cref{eqn:intro:conservation_eqn}.
  A centred control volume $c$ is shown, separated by the interface $I^{(n)}$, resulting in two parts $c^{\pi,(n)}$.
  The hatched regions correspond to liquid that flows into, through (referred to as `in transit') and out of the control volume $c$ during the time interval $[t^{(n)}, t^{(n+1)}]$.
  Here the advecting velocity is proportional to $[1, 1]^T$ and shown by the arrow.}
  \label{fig:interface:mass_transport}
\end{figure}
The transport of mass ($\avar^\pi = 1$) and momentum ($\+\avar^\pi = \+u^\pi$) is modelled, in conservative integral form, by
\begin{equation}\label{eqn:intro:conservation_eqn}
  \integral{\omega^{\pi,(n+1)}}{\rho^\pi \avar^{\pi}}{V} - \integral{\omega^{\pi,(n)}}{\rho^\pi \avar^{\pi}}{V} + \oneDIntegral{t^{(n)}}{t^{(n+1)}}{\integral{\partial \omega^\pi(t) \setminus I(t)}{\rho^\pi u_n^\pi \avar^\pi }{S}}{t} = 0,
\end{equation}
which is illustrated in~\cref{fig:interface:mass_transport}, with $\omega = c$.
Here $\omega$ denotes some fixed in time control volume, $\omega^{\pi,(n)}$ denotes the part of this control volume which is occupied by the $\pi$-phase at $t = t^{(n)}$ and $I(t)$ denotes the interface, where $\pi \in \{l, g\}$ denotes the respective phase.
Furthermore, $u^\pi$ denotes the velocity of the $\pi$-phase and $\rho^\pi$ the corresponding (constant) density.

The advection of the phase interface is closely related to the conservation of mass, and more specifically, the conservation of mass is implied (our fluids are assumed incompressible) by the conservation of phase volume.
Therefore we focus our attention on geometric volume of fluid (VOF) methods, see e.g. the works of~\citet{Hirt1981,Youngs1982,Rider1997,Puckett1997,Rudman1998,Harvie2001,Lopez2004,Weymouth2010} and~\citet{Owkes2014}.
Amongst those references, we find that only~\citep{Owkes2014} results in a sharp dimensionally unsplit interface advection method which exactly conserves volume, and therefore mass, without the need of using unphysical mass redistribution algorithms, and moreover does not produce any wisps, flotsam and/or jetsam.

Several approaches can be taken in the discretisation of~\cref{eqn:intro:conservation_eqn} for $\avar^\pi \neq 1$ (not constant).
In~\citet{Desjardins2010} and~\citet{Desmons2021} the conservative integral form is first converted into a strong form
\begin{equation}\label{eqn:intro:scalar_advec_strong}
  \partial_t (\rho\avar)^\pi + \divergence\roundpar{\rho \+u \avar}^\pi = 0,
\end{equation}
where for simplicity in notation we write $(\rho\avar)^\pi = \rho^\pi\avar^\pi$.
This strong formulation is then discretised using one-sided extrapolation~\citep{Desjardins2010} (reminiscent of the ghost fluid method~\citep{Liu2000}) or using standard methods for hyperbolic problems~\citep{Desmons2021}.
Neither of such approaches results in exact conservation of mass and linear momentum combined with a sharp interface representation.

Instead, we follow the approach which was first introduced in~\citet{Rudman1998}, and later adopted by~\citet{Chenadec2013,Owkes2017,Zuzio2020} and~\citet{Arrufat2021} for \onefluid formulations.
This approach is based on an approximate space-time integration of the space-time integral in~\cref{eqn:intro:conservation_eqn}, and directly results in conservative transport on a collocated variable layout.
Our proposed advection method differs in the way this approach is generalised for a staggered momentum field.

\subsection*{Overview}
We will first introduce some frequently occurring notation in~\cref{sec:notation}.
In~\cref{sec:mass} we will discuss the dimensionally unsplit discretisation of the interface advection equation, for both the one- and \twofluid formulation, and derive sufficient conditions that guarantee boundedness of the volume fraction.
As the fluids are considered incompressible, the advection equation of the interface directly yields the centred mass conservation equations.
A dimensionally unsplit advection method for the staggered momentum field is proposed and analysed in~\cref{sec:momentum}, and is furthermore compared with methods found in the literature.
Results are shown and discussed in~\cref{sec:validation} and we conclude with a preliminary discussion in~\cref{sec:discussion}.

  \section{Notation}\label{sec:notation}

\subsection{Computational mesh}
The mesh we use is \emph{locally} rectilinear in the sense that the mesh is an adaptive block-based mesh, where each block is rectilinear.
Details on the block-based mesh, as well as modifications of the operators presented here near refinement interfaces, can be found in~\citet{VanderPlas2017}.
Even though we consider a rectilinear mesh, we choose to use a general notation suitable for an arbitrary tessellation of the domain, as presented in~\citet{Lipnikov2014}.
The reason for using this notation is that most of the discretisation methods we propose can in fact be applied to an arbitrary tessellation of the domain.

Rather than using indices $i,j,k$ (which imply the use of a logically rectilinear mesh) to denote some control volume $c_{i,j,k}$ we simply refer to some control volume $c \subset \Omega \subset \mathbb{R}^d$, where $d = 2,3$ denotes the dimensionality of the domain $\Omega$.
The set of all control volumes is denoted by $\mathcal{C}$.
Each control volume $c$ has faces which are denoted by $f \in \mathcal{F}(c)$, where $\mathcal{F}(c)$ is the set of all faces of the control volume $c$, see also~\cref{fig:notation:notation:mesh_c}.
Similarly, the control volumes adjacent to some face $f$ are denoted by the set $\mathcal{C}(f)$.
The set of all faces is denoted by $\mathcal{F}$.
\begin{figure}
  \captionbox{A control volume $c \in \mathcal{C}$ with boundary $\partial c = \bigcup_{f \in \mathcal{F}(c)} f$, where $\mathcal{F}(c) = \{f_1, f_2, f_3, f_4\}$.
  Here $\orientation_{c,f_1} = \orientation_{c,f_2} = +1$ and $\orientation_{c,f_3} = \orientation_{c,f_4} = -1$.\label{fig:notation:notation:mesh_c}}
  [\twofigwidth]{
    \import{inkscape/}{the_mesh_c.pdf_tex}
  }
  \hfill
  \captionbox{A staggered control volume $\omega_f$ based on the extrusion of the face $f \in \mathcal{F}$ (dashed line) to the centroids of the neighbouring control volumes $\mathcal{C}(f) = \{c_1, c_2\}$. 
  The faces of the staggered control volume are given by $\mathcal{G}(\omega_f) = \{g_1, g_2, g_3, g_4\}$.\label{fig:notation:notation:mesh_f}}
  [\twofigwidth]{
    \import{inkscape/}{the_mesh_f.pdf_tex} 
  }
\end{figure}

The staggered control volume is defined as the extrusion of the face $f$ to the neighbouring control volume centroids $\+x_c$ for $c \in \mathcal{C}(f)$, and is denoted by $\omega_f$, see also~\cref{fig:notation:notation:mesh_f}.
The centroid of a face $f$ is denoted by $\+x_f$.
The boundary of a staggered control volume consists of faces $g \in \mathcal{G}(\omega_f)$, where $\mathcal{G}$ is the set of all such staggered faces.

\subsection{Function spaces and operators}\label{sec:notation:notation:operators}
For the centred and staggered control volumes we define corresponding function spaces.
The centred functions, such as the pressure $p \in \mathcal{C}^h: \mathcal{C} \rightarrow \mathbb{R}$, are such that the value of $p_c$ approximates the value of $p$ in the centroid of the control volume
\begin{equation}
  p^{(n)}_c \approx p(t^{(n)},\+x_c).
\end{equation}
Similarly, the staggered functions, such as the velocity and momentum, $u \in \mathcal{F}^h: \mathcal{F} \rightarrow \mathbb{R}$ approximate the face-normal component of velocity at the face centroid
\begin{equation}\label{eqn:notation:notation:facefunction}
  u^{(n)}_f \approx \+n_f \cdot \+u(t^{(n)},\+x_f),
\end{equation}
where $\+n_f$ is normal to the face $f$.
The tensor valued functions are defined at the centroids of the faces of the staggered control volumes (denoted by $\+x_g$ as shown in~\cref{fig:notation:notation:mesh_f}), that is, $\tvar \in \mathcal{G}^h: \mathcal{G} \rightarrow \mathbb{R}$ is defined such that
\begin{equation}
  \stagger\orientation_{f,g} \tvar^{(n)}_g \approx \+n_f \cdot \+\tvar (t^{(n)}, \+x_g) \cdot \stagger{\+n}_g.
\end{equation}

The divergence operator $\divh: \mathcal{F}^h \rightarrow \mathcal{C}^h$ is defined as a boundary integral divided by the control volume size $|c|$
\begin{equation}\label{eqn:notation:notation:divergence}
  |c|(\divh u)_c \defeq \sum_{f \in \mathcal{F}(c)} |f| \orientation_{c,f} u_f \approx \integral{\partial c}{\+u \cdot \+n}{S},
\end{equation}
where $\orientation$ encodes the local orientation of the face normal $\+n_f$ such that $\orientation_{c,f}\+n_f$ is pointing out of the centred control volume $c$ and $|f|$ denotes the area (length in 2D) of the face $f$.
See also~\cref{fig:notation:notation:mesh_c}.
The gradient $\gradh : \mathcal{C}^h \rightarrow \mathcal{F}^h$ is defined as the negative adjoint (w.r.t. the $L^2$ inner product) of $\divh$.
That is, $\gradh$ is defined exactly such that
\begin{equation}
  \sum_{c\in\mathcal{C}} |c| p_c (\divh u)_c = -\sum_{f \in \mathcal{F}} |\omega_f| u_f (\gradh p)_f, \quad \forall p \in \mathcal{C}^h, u \in \mathcal{F}^h,
\end{equation}
where we have assumed that the boundary contributions vanish (periodic domain or no-slip/slip boundary conditions).
This results in the following definition of the gradient operator
\begin{equation}\label{eqn:notation:notation:gradient}
  (\gradh p)_f \defeq -\frac{1}{h_f}\sum_{c \in \mathcal{C}(f)} \orientation_{c,f} p_c \approx \+n_f \cdot \gradient p (\+x_f),
\end{equation}
where $h_f \defeq |\omega_f| / |f|$ denotes the face projected distance between the centroids of the neighbouring control volumes of the face $f$.
Furthermore we consider the interpolant $\interpolantsbp: \mathcal{C}^h \rightarrow \mathcal{F}^h$, which weighs each scalar value by their respective control volume size
\begin{equation}\label{eqn:notation:notation:interpolantsbp}
  |\omega_f|(\interpolantsbp \volfrac)_f \defeq \half\sum_{c\in\mathcal{C}(f)} |c| \volfrac_c.
\end{equation}
This interpolant is consistent because $|\omega_f| = \half\sum_{c\in\mathcal{C}(f)} |c|$.

\begin{figure}
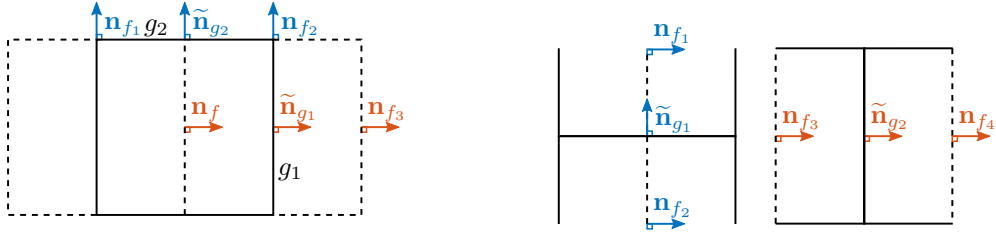

  \centering 
  \subcaptionbox{Illustration of the set of neighbouring faces $\mathcal{F}(g)$.
  The set of faces that overlap with $g_2$ is given by $\mathcal{F}(g_2) = \{f_1, f_2\}$, whereas the set of neighbouring faces with the same face normal direction as $g_1$ is given by $\mathcal{F}(g_1) = \{f, f_3\}$.\label{fig:notation:notation:set_stag}}
  [\twofigwidth]{
    \import{inkscape/}{the_mesh_stag.pdf_tex}
  }\hfill
  \subcaptionbox{Illustration of the set of neighbouring faces $\mathcal{F}^\omega(g)$.
  The set of faces whose staggered control volume share $g_1$ as a boundary is given by $\mathcal{F}^\omega(g_1) = \{f_1, f_2\}$.
  Similarly, for $g_2$ this set is given by $\mathcal{F}^\omega(g^2) = \{f_3, f_4\}$.\label{fig:notation:notation:set_stag_omega}}
  [\twofigwidth]{
    \import{inkscape/}{the_mesh_stag_omega.pdf_tex}
  }
  \caption{The solid boundaries correspond to staggered control volumes $\omega_f$, whereas the dashed boundaries correspond to the centred control volumes $c$.}\label{fig:notation:notation:sets}
\end{figure}
For the staggered control volume $\omega_f$ we define the staggered divergence operator $\stagger \divh: \mathcal{G}^h \rightarrow \mathcal{F}^h$ in terms of a boundary integral over the faces $g \in \mathcal{G}(\omega_f)$ divided by the staggered control volume size $|\omega_f|$
\begin{equation}\label{eqn:notation:notation:divergence_stag}
  |\omega_f| (\stagger \divh \tvar)_f \defeq \sum_{g \in \mathcal{G}(\omega_f)} \stagger\orientation_{f,g} |g| \tvar_g,
\end{equation} 
where $\stagger\orientation_{f,g}$ is such that $\stagger\orientation_{f,g} \stagger{\+n}_g$ points out of $\omega_f$.
The area of the face $g \subset \partial \omega_f$ is defined as
\begin{equation}
  |g| \defeq \half \sum_{f \in \mathcal{F}(g)} |f|,
\end{equation}
where $\mathcal{F}(g)$ is the set of faces $f \in \mathcal{F}$ neighbouring $g$ with the same face normal direction, as illustrated in~\cref{fig:notation:notation:set_stag}.
As with the standard divergence and gradient operators $\divh, \gradh$ we define staggered gradient operator $\stagger \gradh: \mathcal{F}^h \rightarrow \mathcal{G}^h$ as the negative adjoint of $\stagger \divh$. 
This results in
\begin{equation}\label{eqn:notation:notation:gradient_stag}
  (\stagger \gradh u)_g = -\frac{1}{\stagger h_g}\sum_{f \in \mathcal{F}^\omega(g)} \stagger\orientation_{f,g} u_f,
\end{equation}
where $\mathcal{F}^\omega(g)$ denotes the set of neighbouring staggered control volumes which have $g$ as part of their boundary, as shown in~\cref{fig:notation:notation:set_stag_omega}.
The distance $\stagger h_g$ is defined such that $\stagger \gradh$ is exact for linear functions.
Finally we denote by $\interpolantflux: \mathcal{F}^h \rightarrow \mathcal{G}^h$ the area weighted interpolant for interpolation of staggered fields to tensor valued functions
\begin{equation}\label{eqn:notation:notation:interpolantsbp_stag}
  |g|(\interpolantflux \massflux)_g \defeq \half \sum_{f\in\mathcal{F}(g)} (\stagger{\+n}_g \cdot \+n_f) |f| \massflux_f.
\end{equation}

A relation between all of the previously introduced operators, which will be important later on, is stated in~\cref{lem:notation:connection} and proven in~\cref{sec:app:connection}.
\begin{restatable}[Operator connection]{lemma}{lemmaoperatorconnection}\label{lem:notation:connection}
  For the previously introduced operators~\cref{eqn:notation:notation:interpolantsbp,eqn:notation:notation:interpolantsbp_stag,eqn:notation:notation:divergence,eqn:notation:notation:divergence_stag} it holds that
  \begin{eqnarray}\label{eqn:notation:notation:operator_connection}
    \stagger \divh \interpolantflux{} = \interpolantsbp \divh.
  \end{eqnarray}
\end{restatable}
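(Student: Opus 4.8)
The plan is to prove the operator identity by evaluating both sides on an arbitrary staggered field $u \in \mathcal{F}^h$ and comparing the coefficient of each $u_{f'}$, $f' \in \mathcal{F}$. Expanding the left-hand side with the definitions~\eqref{eqn:notation:notation:divergence_stag} of $\stagger\divh$ and~\eqref{eqn:notation:notation:interpolantsbp_stag} of $\interpolantflux$ gives
\begin{equation}
  \abs{\omega_f}(\stagger\divh\interpolantflux u)_f = \half \sum_{g\in\mathcal{G}(\omega_f)} \stagger\orientation_{f,g} \sum_{f'\in\mathcal{F}(g)} (\stagger{\+n}_g\cdot\+n_{f'})\abs{f'} u_{f'},
\end{equation}
while expanding the right-hand side with~\eqref{eqn:notation:notation:interpolantsbp} and~\eqref{eqn:notation:notation:divergence} gives
\begin{equation}
  \abs{\omega_f}(\interpolantsbp\divh u)_f = \half\sum_{c\in\mathcal{C}(f)}\sum_{f'\in\mathcal{F}(c)} \abs{f'}\orientation_{c,f'} u_{f'}.
\end{equation}
Since the common factors $\abs{\omega_f}$ and $\half\abs{f'}$ may be divided out, the lemma reduces to the purely local, geometric identity
\begin{equation}\label{eqn:plan:local}
  \sum_{\substack{g\in\mathcal{G}(\omega_f)\\ f'\in\mathcal{F}(g)}} \stagger\orientation_{f,g}\,(\stagger{\+n}_g\cdot\+n_{f'}) = \sum_{c\in\mathcal{C}(f)\cap\mathcal{C}(f')} \orientation_{c,f'},
\end{equation}
which must hold for every pair $(f,f')$.

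To establish~\eqref{eqn:plan:local} I would exploit that the staggered control volume decomposes as $\omega_f = \bigcup_{c\in\mathcal{C}(f)}(\omega_f\cap c)$, i.e. $\omega_f$ consists of the half of each neighbouring cell adjacent to $f$. Both sides then vanish unless $f'$ is either the central face $f$ or an outer face of one of the two cells in $\mathcal{C}(f) = \{c_1,c_2\}$: the right-hand side because $\mathcal{C}(f)\cap\mathcal{C}(f') = \emptyset$ otherwise, and the left-hand side because every $f'$ appearing in a stencil $\mathcal{F}(g)$ of some $g\in\mathcal{G}(\omega_f)$ is a face of $c_1$ or $c_2$. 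It therefore suffices to treat two cases.

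\emph{Case 1: $f'=f$.} Here $\mathcal{C}(f)\cap\mathcal{C}(f) = \{c_1,c_2\}$ and the right-hand side is $\orientation_{c_1,f}+\orientation_{c_2,f}=0$, since $\orientation_{c,f}\+n_f$ points out of $c$ and the two cells lie on opposite sides of $f$. On the left, $f$ lies in the stencil of exactly the two end faces of $\omega_f$ that are normal to $\+n_f$ and pass through the two centroids; these carry opposite outward orientations $\stagger\orientation_{f,g}$ while $\stagger{\+n}_g\cdot\+n_f$ is the same, so their contributions cancel. \emph{Case 2: $f'$ is an outer face of exactly one cell $c\in\mathcal{C}(f)$.} The right-hand side is the single term $\orientation_{c,f'}$, and on the left such an $f'$ lies in the stencil of exactly one boundary face $g$ of $\omega_f$, the one bordering the half-cell $\omega_f\cap c$. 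One checks that $\stagger{\+n}_g$ and $\+n_{f'}$ are parallel and that $\stagger\orientation_{f,g}(\stagger{\+n}_g\cdot\+n_{f'}) = \orientation_{c,f'}$, because on that side the outward-from-$\omega_f$ direction of $g$ agrees with the outward-from-$c$ direction of $f'$.

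The main obstacle is the bookkeeping in Case 2 and, more generally, the rigorous verification of the face correspondence: from the definitions of $\mathcal{F}(g)$, $\stagger\orientation_{f,g}$ and $\orientation_{c,f'}$ one must pin down precisely which cell faces $f'$ enter each staggered stencil $\mathcal{F}(g)$ and confirm the sign alignment. The locally rectilinear structure makes this transparent: the two end faces pair the interior face $f$ with the far cell faces, while each tangential face of $\omega_f$ pairs the two aligned cell faces it overlaps, and in every instance the staggered and centred orientations coincide. Summing the two cases over all $f'$ yields~\eqref{eqn:plan:local}, and hence the operator identity~\eqref{eqn:notation:notation:operator_connection}.
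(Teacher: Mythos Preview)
Your proposal is correct and takes essentially the same approach as the paper: both expand the two sides from the definitions, use the cancellation of the two contributions at $f'=f$, and for $f'\neq f$ identify the unique neighbouring cell $c\in\mathcal{C}(f)$ with $f'\in\mathcal{F}(c)$ together with the sign identity $\stagger\orientation_{f,g}(\stagger{\+n}_g\cdot\+n_{f'})=\orientation_{c,f'}$. The only cosmetic difference is that the paper packages this as a reindexing of the double sum via the set equality $\bigcup_{g\in\mathcal{G}(\omega_f)}\mathcal{F}(g)=\bigcup_{c\in\mathcal{C}(f)}\mathcal{F}(c)$ and the map $(f,h)\mapsto c(f,h)$, whereas you compare coefficients of $u_{f'}$ by case analysis; the underlying geometry is identical.
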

  \section{Transport of mass}\label{sec:mass}
The equation that governs conservation of the centred mass can be obtained from~\cref{eqn:intro:conservation_eqn} by letting $\avar^\pi = 1$ and letting the control volume coincide with a centred control volume $\omega = c$
\begin{equation}\label{eqn:mass:exact_mass_cons}
  \integral{c^{\pi,(n+1)}}{\rho^\pi}{V} - \integral{c^{\pi,(n)}}{\rho^\pi}{V} + \sum_{f\in\mathcal{F}(c)}\oneDIntegral{t^{(n)}}{t^{(n+1)}}{\integral{f^\pi(t)}{\rho^\pi u^\pi}{S}}{t} = 0.
\end{equation}
Here we moreover replaced the integration over the boundary $\partial c \setminus I(t)$ by a sum of integrals over the $\pi$-phase part of the faces, which follows from
\begin{equation}
  \partial c^\pi \setminus I(t) = \bigcup_{f\in\mathcal{F}(c)} f^\pi(t).
\end{equation}
As briefly motivated in the introduction, we will use the geometric VOF method for the advection of the interface.
Hence the $d$-dimensional phase domain $\Omega^\pi \subset \Omega \subset \mathbb{R}^d$, for $\pi \in \{l, g\}$, will implicitly be represented by the volume fraction function $\volfrac^\pi \in \mathcal{C}^h$, where the volume fraction $\volfrac^\pi_c$ is defined as the fraction of volume in the control volume $c$ occupied by the $\pi$-phase
\begin{equation}
  \volfrac^\pi_c \defeq \frac{|c^\pi|}{|c|} \in [0, 1].
\end{equation}
Since the fluids are assumed incompressible we can factor out the constant density $\rho^\pi$ in~\cref{eqn:mass:exact_mass_cons}, resulting in
\begin{equation}\label{eqn:interface:mass_cons}
  |c|\volfrac^{\pi,(n+1)}_c - |c|\volfrac^{\pi,(n)}_c + \sum_{f\in\mathcal{F}(c)}\oneDIntegral{t^{(n)}}{t^{(n+1)}}{\integral{f^\pi(t)}{u^\pi_n}{S}}{t} = 0.
\end{equation}
This is a volumetric constraint on the advection of the interface, and in particular, summation of~\cref{eqn:interface:mass_cons} over all control volumes $c \in \mathcal{C}$ shows that the volume of each of the phases should be constant in time (provided an absence of in- or outflow at the boundary).
Adding the volumetric constraints for both phases results in (after letting the time step $\dt = t^{(n+1)} - t^{(n)}$ tend to zero)
\begin{equation}\label{eqn:interface:mix_divergence}
  \integral{\partial c}{u_n}{S} = 0,
\end{equation}
where the boundary integral contains contributions from the liquid as well as the gas velocity.

Sharp interface approximations of~\cref{eqn:interface:mass_cons} are obtained by a sharp approximation of the space-time integral, which often rely on the geometric intersection of an approximate {donating region} (DR)~\citep{Zhang2019} with an approximate phase domain representation $\Omega^\pi$.
The geometric VOF method traditionally relies on the piecewise linear approximation of the interface within each interface control volume, i.e.
\begin{equation}\label{eqn:interface:cutcell}
  c^l \defeq \set{\+x \in c}{\+\eta_c \cdot (\+x - \+x_c) + s(\+\eta_c; \volfrac^l_c) \le 0}, \quad c^g \defeq c \setminus c^l.
\end{equation}
We choose to approximate the interface normal $\+\eta \in [\mathcal{C}^h]^d$ by making use of local height-functions (LHFs)~\citep{gerrits2003dynamics,Veldman2007} when possible, and if this is not possible we resort to the efficient least-squares VOF interface reconstruction algorithm (ELVIRA)~\citep{Pilliod2004}.
Provided with the interface normal and volume fraction, the shift $s(\+\eta_c; \volfrac^l_c)$ is defined by ensuring that the volume of the reconstructed liquid part coincides with $|c| \volfrac^l_c$, which can be achieved using the methods described by~\citet{Scardovelli2000}.

In what follows we will discuss the geometric VOF method for the \onefluid formulation in detail, i.e. we discuss the discretisation of the space-time integral in~\cref{eqn:interface:mass_cons}.
We will first define some convenient notation for the oriented DR.
Then we derive sufficient conditions for the construction of approximate DRs which will ensure that the resulting volume fraction is bounded between zero and one.
Subsequently we will consider some DR approximation methods found in the literature in light of these conditions.

Provided with a discretisation of~\cref{eqn:interface:mass_cons}, the transport of the centred mass is modelled by denoting the mass per phase as $|c|\volfrac^{\pi,(n)}_c \rho^\pi$, whose evolution equation directly follows from the evolution of the volume fraction function $\volfrac^{\pi,(n)}$.
We will furthermore propose a generalisation of the interface advection method for the \twofluid model.

\subsection{The oriented donating region}\label{sec:interface:oriented_dr}
\begin{figure}
  \subcaptionbox{Absolute orientation.
  \label{fig:interface:dr:self_intersect:absolute}}
  [\twofigwidth]{
\begingroup%
  \makeatletter%
  \providecommand\color[2][]{%
    \errmessage{(Inkscape) Color is used for the text in Inkscape, but the package 'color.sty' is not loaded}%
    \renewcommand\color[2][]{}%
  }%
  \providecommand\transparent[1]{%
    \errmessage{(Inkscape) Transparency is used (non-zero) for the text in Inkscape, but the package 'transparent.sty' is not loaded}%
    \renewcommand\transparent[1]{}%
  }%
  \providecommand\rotatebox[2]{#2}%
  \newcommand*\fsize{\dimexpr\f@size pt\relax}%
  \newcommand*\lineheight[1]{\fontsize{\fsize}{#1\fsize}\selectfont}%
  \ifx\svgwidth\undefined%
    \setlength{\unitlength}{116.17691482bp}%
    \ifx\svgscale\undefined%
      \relax%
    \else%
      \setlength{\unitlength}{\unitlength * \real{\svgscale}}%
    \fi%
  \else%
    \setlength{\unitlength}{\svgwidth}%
  \fi%
  \global\let\svgwidth\undefined%
  \global\let\svgscale\undefined%
  \makeatother%
  \begin{picture}(1,0.78917499)%
    \lineheight{1}%
    \setlength\tabcolsep{0pt}%
    \put(0,0){\includegraphics[width=\unitlength,page=1]{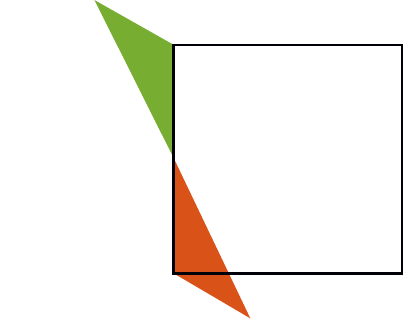}}%
    \put(-0.00357247,0.49370448){\color[rgb]{0.46666667,0.67843137,0.18823529}\makebox(0,0)[lt]{\lineheight{1.25}\smash{\begin{tabular}[t]{l}$\posflux{\drOne_f^{(n)}}$\end{tabular}}}}%
    \put(0.4383954,0.56245695){\color[rgb]{0,0,0}\makebox(0,0)[lt]{\lineheight{1.25}\smash{\begin{tabular}[t]{l}$f$\end{tabular}}}}%
    \put(0,0){\includegraphics[width=\unitlength,page=2]{dr_self_intersect.pdf}}%
    \put(0.46855603,0.43121753){\color[rgb]{0,0.44705882,0.74117647}\makebox(0,0)[lt]{\lineheight{1.25}\smash{\begin{tabular}[t]{l}$\+n_f$\end{tabular}}}}%
    \put(0.09586427,0.27642684){\color[rgb]{0.85098039,0.3254902,0.09803922}\makebox(0,0)[lt]{\lineheight{1.25}\smash{\begin{tabular}[t]{l}$\negflux{\drOne_f^{(n)}}$\end{tabular}}}}%
    \put(0,0){\includegraphics[width=\unitlength,page=3]{dr_self_intersect.pdf}}%
    \put(0.73082078,0.3294591){\color[rgb]{0,0,0}\makebox(0,0)[lt]{\lineheight{1.25}\smash{\begin{tabular}[t]{l}$\+x_c$\end{tabular}}}}%
  \end{picture}%
\endgroup%

  }
  \hfill 
  \subcaptionbox{Orientation relative to $c$.
  \label{fig:interface:dr:self_intersect:relative}}
  [\twofigwidth]{
    \import{inkscape/}{dr_self_intersect_relative.pdf_tex}
  }
  \caption{Illustration of an oriented and self-intersecting DR.
  The positively oriented part consists of the points that flow through the face $f$ in the direction of the face normal, whereas the negatively oriented part consists of points that flow through the face $f$ in the direction opposite to the face normal.
  When the face normal $\+n_f$ is considered we refer to the orientation of the part as the absolute orientation (left), and if the face normal $\orientation_{c,f}\+n_f$ is considered, we refer to it as the relative orientation instead (right).}
  \label{fig:interface:dr:self_intersect}
\end{figure}
For the moment we will assume that our velocity field is continuous, as is the case in the \onefluid model.
We emphasise this by denoting the velocity by $\onevelo \in \mathcal{F}^h$.
For the evaluation of the space-time integral in~\cref{eqn:interface:mass_cons} we will use a geometric intersection of a donating region with the phase domain.
The DR, which will be denoted by $\drOne^{(n)}_f$, is defined as the set of points for which the following equality holds
\begin{equation}\label{eqn:interface:dr:defining_property} 
  M_0(\drOne^{(n)}_f \cap \Omega^{\pi,(n)}) = \oneDIntegral{t^{(n)}}{t^{(n+1)}}{\integral{f^{\pi}(t)}{u^\pi_n}{S}}{t},
\end{equation}
where $M_0$ denotes the signed volume of a set, and will be defined in~\cref{eqn:interface:dr:signed_vol}.

The DR is endowed with an orientation such that a volume flux can be both positively as well as negatively contributing.
Note that a DR may self intersect, resulting in a DR which contains a positively as well as a negatively oriented part.
The orientation of a part of the DR $\drOne^{(n)}_f$ is defined as negative if the face normal $\+n_f$ points into it, see also~\cref{fig:interface:dr:self_intersect:absolute}.

Instead of separately having to keep track of each of the parts of the DR per face, we introduce the following notation: an oriented set $\dr$ is defined as a pair of non-oriented sets
\begin{equation}
  \dr = (\dr^+, \dr^-), \quad \dr^\pm \subset \mathbb{R}^d,
\end{equation}
where $\dr^+, \dr^-$ equals the positively and negatively oriented part respectively.
The signed volume is then defined as the difference between the unsigned volumes
\begin{equation}\label{eqn:interface:dr:signed_vol}
  M_0(\dr) \defeq |\dr^+| - |\dr^-|.
\end{equation}
Intersection of an oriented set $\dr$ with a non-oriented set $A$ is defined per oriented part
\begin{equation}
  \dr \cap A \defeq (\dr^+ \cap A, \dr^- \cap A),
\end{equation}
and the union, intersection as well as set difference of two oriented sets are defined `elementwise', e.g.
\begin{equation}
  \dr \cap \nabla \defeq (\dr^+ \cap \nabla^+, \dr^- \cap \nabla^-).
\end{equation}
Moreover, we allow the orientation of the parts of an oriented set to be swapped, which we denote as the multiplication by minus one (for two non-oriented sets $A, B$)
\begin{equation}\label{eqn:interfacee:dr:mult}
  -(A, B) \defeq (B, A).
\end{equation}
We refer to the orientation of $\orientation_{c,f}\drOne^{(n)}_f$ (recall that $\orientation_{c,f} = \pm 1$) as the orientation of the DR relative to the control volume $c$, or simply as the `relative orientation'.
This is illustrated in~\cref{fig:interface:dr:self_intersect:relative}.
Note that DRs of opposite relative orientation may overlap, resulting in a volume flux that cancels due to the opposite relative orientation, and which corresponds to fluid that merely passes through the control volume $c$: the fluid is `in transit' while in $c$ (see also~\cref{fig:interface:mass_transport}).

\begin{remark}[Definition of the donating region]
  Often the DR is defined as the set of points that are fluxed through the face $f$ during the time interval $[t^{(n)}, t^{(n+1)}]$
  \begin{equation}\label{eqn:interface:onefluid:dr}
    \drOne^{(n)}_f \defeq \set{\flowmap{-\tau}{t^{(n+1)}}f}{\tau \in [0, \dt]},
  \end{equation} 
  where the flow map $\flowmap{\tau}{t_0}$ is the solution operator of the following initial value problem
  \begin{equation}
    \frac{d}{dt} \+x(t) = \Onevelo(t, \+x(t)), \quad \+x(t_0) = \+x_0,
  \end{equation}
  such that $\flowmap{\tau}{t_0} \+x_0 = \+x(t_0 + \tau)$.

  This definition however does not in general satisfy~\cref{eqn:interface:dr:defining_property}, but it is sufficient for our discussion and we will therefore make use of it.
  For a precise definition we refer to~\citet{Zhang2019}.
\end{remark}

\subsection{Geometric VOF for the \onefluid model}\label{sec:interface:onefluid}
\begin{figure}
  \centering
  \import{inkscape/}{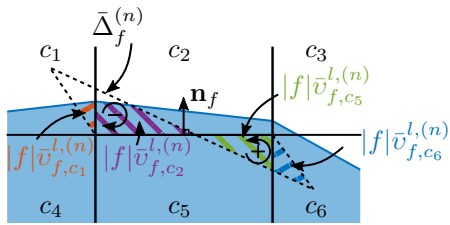}
  \caption{Example of a self-intersecting DR (dashed), intersected with the neighbouring phase domain (shaded), resulting in the partial volume fluxes (hatched regions).
  The neighbouring control volumes are given by $\mathcal{C}^2(f) = \{c_1, \ldots, c_6\}$, and four out of six of the partial volume fluxes are nontrivial.}
  \label{fig:interface:onefluid:flux}
\end{figure}
Provided with the defining equation~\eqref{eqn:interface:dr:defining_property} of the DR, we can rewrite~\cref{eqn:interface:mass_cons} as follows
\begin{equation}\label{eqn:interface:onefluid:mass_cons}
  \frac{\volfrac^{\pi,(n+1)} - \volfrac^{\pi,(n)}}{\dt} + \divh \volumefluxOne^{\pi,(n)} = 0,
\end{equation}
where the divergence operator $\divh$ is as defined in~\cref{eqn:notation:notation:divergence}, and the signed volume flux $\volumefluxOne^{\pi,(n)} \in \mathcal{F}^h$ is given by the intersection volume of the DR with the phase domain
\begin{equation}\label{eqn:interface:onefluid:volflux}
  \dt |f|\volumefluxOne^{\pi,(n)}_f \defeq M_0(\drOne^{(n)}_f \cap \Omega^{\pi,(n)}).
\end{equation}
As the phase domain is approximated {piecewise} linearly, the volume flux is also computed in a piecewise manner
\begin{equation}\label{eqn:centred:volume_flux_per_cell_equivalence}
  \volumefluxOne^{\pi,(n)}_f = \sum_{b \in \mathcal{C}} \volumefluxOne^{\pi,(n)}_{f,b},
\end{equation}
where $|f|\volumefluxOne^{\pi,(n)}_{f,b}$ is the `partial volume flux' and results from the fluid that flows through the face $f$ and comes from the neighbouring control volume $b$. 
The partial volume flux is defined as
\begin{equation}\label{eqn:centred:volume_flux_per_cell}
  \dt|f|\volumefluxOne^{\pi,(n)}_{f,b} \defeq M_0(\drOne^{(n)}_f \cap b^{\pi,(n)}),
\end{equation}
as illustrated in~\cref{fig:interface:onefluid:flux}.
Note that under a suitable CFL constraint the use of the set $\mathcal{C}$ in~\cref{eqn:centred:volume_flux_per_cell_equivalence} can be replaced by $\mathcal{C}^2(f)$, which is the set of all control volumes that share at least one node with $f$ (i.e. $2 \times 3^{d-1}$ control volumes for a rectilinear mesh).

For the {approximate} advection of the interface we will approximate both terms on the right-hand side of~\cref{eqn:interface:onefluid:volflux}: the fluid domain is approximated using a piecewise linear reconstruction of the interface, whereas the DR is replaced by a polytopal approximation.
In what follows we will discuss types of so called `fluxing errors' that result from poorly approximated DRs, and subsequently we show that the absence of such fluxing errors results in a dimensionally unsplit advection method for which boundedness of the volume fraction can be guaranteed.
Moreover we discuss several advection methods from the literature, and check which of the fluxing errors are made.

\subsubsection{Fluxing errors in approximate donating regions}\label{sec:interface:onefluid:dr}
\begin{figure}
  \centering
  \import{inkscape/}{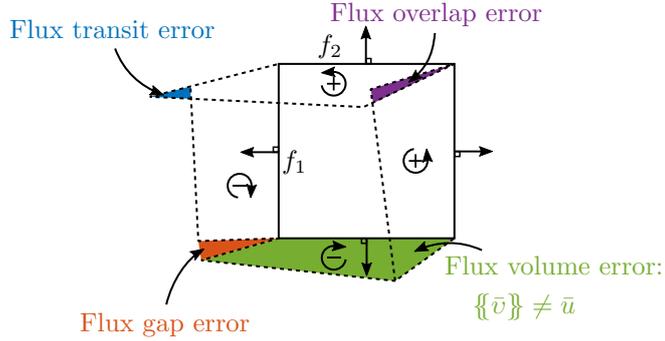}
  \caption{Example of four two-dimensional donating regions (black dashed lines, each defined by four vertices) defined for some control volume $c$ (black solid line) resulting in four fluxing errors: the flux overlap error results from erroneously overlapping (i.e. of same relative orientation) DRs, the flux gap error results from a gap between two neighbouring DRs, and the flux transit error results from two neighbouring DRs with no overlap or gap, but which are defined using a non-matching corner position.
  Finally, it must be ensured that the volume of the DR matches the total volume flux according to $\dt |f| \onevelo_f$, as stated in~\cref{eqn:interface:onefluid:volume_enforcement}.
  Note that the orientation shown here is relative to the control volume $c$.
  }
  \label{fig:int:advect:dr:errors}
\end{figure}
When approximating a DR it is important to keep in mind that, besides the approximation errors, boundedness of the resulting volume fraction is of importance.
Some approximation errors may result in what we call fluxing errors, which for example lead to a piece of fluid that is fluxed twice during a single time step, potentially resulting in unboundedness of the volume fraction.
An absence of fluxing errors does not imply an absence of approximation errors, but we will show that an absence of fluxing errors results in guaranteed boundedness of the volume fraction.
This result will be formalised in~\cref{cor:interface:onefluid:boundedness}, which to the best of our knowledge is new, and can be used as a guide in the design of approximate DR methods.
We will now discuss four different fluxing errors, three of which will turn out to be essential for boundedness, and which are illustrated in~\cref{fig:int:advect:dr:errors}.

\paragraph{Flux overlap error}
Erroneously constructed DRs may result in overlap of two neighbouring DRs with equal relative orientation, as shown by the purple shaded region in~\cref{fig:int:advect:dr:errors}.
This implies that the corresponding fluid (if any) will leave the control volume $c$ twice, therefore potentially resulting in unboundedness of the volume fraction.
We refer to this as a flux overlap error, which is defined as
\begin{equation}\label{eqn:interface:onefluid:dr:overlap}
  \posflux{\orientation_{c,f}\drOne^{(n)}_f} \cap \posflux{\orientation_{c,g}\drOne^{(n)}_g} \neq \emptyset, \text{ for some } f \neq g \in \mathcal{F}(c),
\end{equation}
and similarly for the negatively oriented counterpart.

\paragraph{Flux gap error}
There may also be a gap between two neighbouring DRs, which implies that the fluid (if any) will erroneously remain inside the neighbouring control volume, potentially resulting in wisps~\citep{Tryggvason2011}, which are tiny fragments of fluid which are erroneously jettisoned from the bulk.
See e.g. the red shaded region in~\cref{fig:int:advect:dr:errors}.
Such an error is referred to as flux gap error.

\paragraph{Flux transit error}
While overlapping DRs are permissible if they have opposite relative orientation, care should be taken that they do not result in what we refer to as a flux transit error, shown in~\cref{fig:int:advect:dr:errors} as the blue shaded region.
Here a mismatch in the DR corner position results in fluid that leaves $c$ through $f_2$, but does not enter $c$ via $f_1$.
This can lead to unboundedness of the volume fraction.
A flux transit error (referred to as the existence of `nonconforming flux polyhedra' in~\citet{Ivey2017}) is defined as
\begin{equation}\label{eqn:interface:onefluid:dr:transit}
  \underbrace{\posflux{\orientation_{c,f}\drOne_f^{(n)}}}_\text{leaving} \setminus \bigcup_{g\in\mathcal{F}(c)} \underbrace{\negflux{\orientation_{c,g} \drOne_g^{(n)}}}_\text{entering} \not\subset c, \text{ for some } f \in \mathcal{F}(c).
\end{equation}

\paragraph{Flux volume error}
Finally, note that if~\cref{eqn:interface:onefluid:mass_cons} is summed over $\pi = l, g$ we find that
\begin{equation}\label{eqn:interface:err:div_vol}
  \mean{\volfrac}^{(n+1)} - \mean{\volfrac}^{(n)} + \divh\mean{\volumefluxOne}^{(n)} = 0 \quad\implies\quad \divh\mean{\volumefluxOne}^{(n)} = 0,
\end{equation}
since both phases together fill up a control volume $\mean{\volfrac} = 1$.
By definition of the volume flux in~\cref{eqn:interface:onefluid:volflux} we find that the sum of the volume fluxes equals the signed volume of the DR
\begin{equation}
  \dt|f|\mean{\volumefluxOne}^{(n)}_f = M_0(\drOne^{(n)}_f).
\end{equation}
and therefore the volume of the exact DR should equal the exact total volume flux
\begin{equation}
  M_0(\drOne^{(n)}_f) = \oneDIntegral{t^{(n)}}{t^{(n+1)}}{\integral{f}{\onevelo_n}{S}}{t} \approx \dt |f| \onevelo^{(n)}_f.
\end{equation}
Hence we will impose that the volume of the DR should equal the total volume flux~\citep{Lopez2004}
\begin{equation}\label{eqn:interface:onefluid:volume_enforcement}
  M_0(\drOne^{(n)}_f) \approx  \dt |f| \onevelo^{(n)}_f,
\end{equation}
such that we find that~\cref{eqn:interface:err:div_vol} holds automatically by the incompressibility constraint $\divh\onevelo^{(n)} = 0$.
Note that~\cref{eqn:interface:onefluid:volume_enforcement} approximately holds for exact DRs, but we impose it to hold exactly for approximate DRs.
The fourth error that we consider is the flux volume error, which occurs when the signed volume of the DR (as defined in~\cref{eqn:interface:dr:signed_vol}) does not match the total volume flux $\dt |f| \onevelo^{(n)}_f$.
This can be expressed as
\begin{equation}
  \mean{\volumefluxOne}^{(n)}_f \neq \onevelo^{(n)}_f.
\end{equation}

Imposing that all types of fluxing errors are absent does not leave a lot of freedom for constructing an approximate polytopal DR.
In~\cref{sec:int:onefluid:two:twoddr,sec:int:onefluid:two:threeddr} we will consider DR approximations found in the literature in light of the four aforementioned types of fluxing errors.
But first we will analyse how the fluxing errors relate to the boundedness of the volume fraction.

\subsubsection{Boundedness of the volume fraction}
The partial volume fluxes, which were defined in~\cref{eqn:centred:volume_flux_per_cell}, allow the interface advection equation~\eqref{eqn:interface:onefluid:mass_cons} to be restated as
\begin{equation}
  |c|(\volfrac^{\pi,(n+1)} - \volfrac^{\pi,(n)}) = - \dt\sum_{f \in \mathcal{F}(c)} \orientation_{c,f}|f|\sum_{b \in \mathcal{C}^2(f)} \volumefluxOne^{\pi,(n)}_{f,b} = |c|(V^{+,\pi}_c - V^{-,\pi}_c),
  \label{eqn:centred:advection_reorderedsummation}
\end{equation}
where the non-negative in- and outgoing volume through the boundary of the control volume $c$, denoted by $V^{+,\pi}_c$ and $V^{-,\pi}_c$ respectively, are defined as
\begin{equation}\label{eqn:centred:advection_inoutgoing_volume}
  V^{\pm,\pi}_c \defeq \pm \frac{\dt}{|c|} \sum_{b \in \mathcal{C}(c)} \squarepar{-\sum_{f \in \mathcal{F}(c)} \orientation_{c,f}|f| \volumefluxOne^{\pi,(n)}_{f,b}}^\pm.
\end{equation}
Here $\mathcal{C}(c)$ are the control volumes neighbouring $c$ that share at least one node with $c$ (i.e. $3^d$ control volumes if the mesh is rectilinear), and $\posflux{x}, \negflux{x}$ denote the positive and negative part respectively of $x$.
Note that we have swapped the order of summation in~\cref{eqn:centred:advection_reorderedsummation}, which implies that fluid that is merely in transit, and therefore results in two contributions of opposite sign, does not affect the in- and outgoing volumes.

The following theorem, which will also be crucial in~\cref{sec:momentum}, shows that bounded outgoing flow can be guaranteed provided that the approximate DRs do not contain some of the errors that were discussed in~\cref{sec:interface:onefluid:dr}.
A proof is found in~\cref{sec:app:dr_analysis}.
\begin{restatable}[Bounded outflow]{theorem}{lemmactuboundedoutflow}\label{thm:mass:bounded_outflow}
  The outgoing flow is bounded by the $\pi$-phase volume fraction contained in the control volume $c$ at $t = t^{(n)}$
  \begin{eqnarray}\label{eqn:centred:bounded_outflow}
    V^{-,\pi} \le \volfrac^{\pi,(n)},
  \end{eqnarray}
  provided that the approximate DRs do not contain any flux overlap nor transit errors.
\end{restatable}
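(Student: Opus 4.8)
The plan is to recast the outgoing flow $V^{-,\pi}$ as a sum of oriented set volumes and to show that, under the two hypotheses, this sum collapses to the volume of a single set contained in $c^{\pi,(n)}$. Denote by $\drOne_{c,f} \defeq \orientation_{c,f}\drOne^{(n)}_f$ the donating region of face $f$ oriented relative to $c$, with positive (``leaving'') part $\drOne_{c,f}^+$ and negative (``entering'') part $\drOne_{c,f}^-$, and set $S^+ \defeq \bigcup_{f\in\mathcal{F}(c)}\drOne_{c,f}^+$ and $S^- \defeq \bigcup_{f\in\mathcal{F}(c)}\drOne_{c,f}^-$. Expanding the partial volume fluxes~\cref{eqn:centred:volume_flux_per_cell} through the signed volume~\cref{eqn:interface:dr:signed_vol} and the orientation flip~\cref{eqn:interfacee:dr:mult} recasts~\cref{eqn:centred:advection_inoutgoing_volume} as $V^{-,\pi} = \frac{1}{|c|}\sum_{b\in\mathcal{C}(c)}\posflux{L_b - E_b}$, where $L_b \defeq \sum_{f}|\drOne_{c,f}^+\cap b^{\pi,(n)}|$ collects the $\pi$-volume leaving $c$ that is drawn from $b$ and $E_b \defeq \sum_{f}|\drOne_{c,f}^-\cap b^{\pi,(n)}|$ the $\pi$-volume entering $c$ from $b$.

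I would then apply the two hypotheses in turn. Absence of the flux overlap error~\cref{eqn:interface:onefluid:dr:overlap} makes the leaving parts $\{\drOne_{c,f}^+\}_{f}$ pairwise disjoint, and likewise the entering parts, so each per-face sum collapses into a single union, $L_b = |S^+\cap b^{\pi,(n)}|$ and $E_b = |S^-\cap b^{\pi,(n)}|$. A short set-theoretic estimate then gives $\posflux{L_b - E_b} \le |(S^+\setminus S^-)\cap b^{\pi,(n)}|$: the bound is immediate when $L_b \le E_b$, and otherwise one writes $|S^+\cap b^{\pi,(n)}| = |(S^+\setminus S^-)\cap b^{\pi,(n)}| + |S^+\cap S^-\cap b^{\pi,(n)}|$ and cancels the last term against $|S^-\cap b^{\pi,(n)}| \ge |S^+\cap S^-\cap b^{\pi,(n)}|$.

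The decisive ingredient is the absence of the flux transit error~\cref{eqn:interface:onefluid:dr:transit}, which states precisely that $\drOne_{c,f}^+\setminus S^- \subset c$ for every $f\in\mathcal{F}(c)$; taking the union over $f$ and using $(\bigcup_f A_f)\setminus B = \bigcup_f(A_f\setminus B)$ upgrades this to $S^+\setminus S^- \subset c$. Because distinct control volumes meet only in a set of measure zero, $(S^+\setminus S^-)\cap b^{\pi,(n)}$ has vanishing volume for every $b\neq c$, so only the term $b=c$ survives. Chaining the three estimates then yields $V^{-,\pi} \le \frac{1}{|c|}|(S^+\setminus S^-)\cap c^{\pi,(n)}| \le \frac{|c^{\pi,(n)}|}{|c|} = \volfrac^{\pi,(n)}$, which is the claim.

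I expect the only genuine difficulty to lie in the sign and orientation bookkeeping of the first step — verifying through~\cref{eqn:interfacee:dr:mult} that multiplication by $\orientation_{c,f}$ swaps the oriented parts, and that the positive relative part is indeed the contribution leaving $c$ — rather than in any analytic subtlety, since the remaining estimates are elementary. The conceptual crux is recognising that the transit-error hypothesis is exactly the containment $S^+\setminus S^- \subset c$ that discards every neighbour contribution $b\neq c$; it is worth noting that neither the flux gap nor the flux volume error is needed for this one-sided bound, in agreement with the hypotheses of the theorem.
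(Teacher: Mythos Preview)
Your proof is correct and follows essentially the same route as the paper's: both collapse the per-face sums into unions via the no-overlap hypothesis, and both use the no-transit hypothesis to localise the surviving contribution to $b=c$. The only notable difference is that the paper, by additionally observing that the entering parts satisfy $S^-\cap c=\emptyset$, obtains the exact identity $|c|V^{-,\pi}_c = |(S^+\setminus S^-)\cap c^{\pi,(n)}|$ rather than your inequality $[L_b-E_b]^+\le |(S^+\setminus S^-)\cap b^{\pi,(n)}|$; your estimate sidesteps that extra geometric observation at the modest cost of giving a bound instead of an equality, which is all the theorem requires.
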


Provided with the result of~\cref{thm:mass:bounded_outflow} we can now easily show that bounded outflow, combined with an absence of flux volume errors, results in boundedness of the volume fraction.
\begin{corollary}[Boundedness of the volume fraction]\label{cor:interface:onefluid:boundedness}
  Suppose that we advect the liquid phase.
  An absence of flux overlap and transit errors implies that the liquid volume fraction is bounded from below
  \begin{equation}\label{eqn:interface:properties:bound_below}
    \volfrac^{l,(n+1)} \ge V^{+,l} \ge 0.
  \end{equation}
  If additionally no flux volume errors are made then the liquid volume fraction is bounded from above as well
  \begin{equation}\label{eqn:interface:properties:bound_above}
    \volfrac^{l,(n+1)} \le 1 - V^{+,g} \le 1.
  \end{equation}
\end{corollary}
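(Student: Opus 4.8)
The plan is to read both bounds directly off the one-step update
$\volfrac^{l,(n+1)} = \volfrac^{l,(n)} + V^{+,l} - V^{-,l}$, which is merely a rearrangement of~\cref{eqn:centred:advection_reorderedsummation}, and to feed the outflow estimate of~\cref{thm:mass:bounded_outflow} into it.

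For the lower bound~\cref{eqn:interface:properties:bound_below} I would first invoke~\cref{thm:mass:bounded_outflow} with $\pi = l$, which is licensed precisely by the assumed absence of flux overlap and transit errors, to obtain $V^{-,l} \le \volfrac^{l,(n)}$ and hence $\volfrac^{l,(n)} - V^{-,l} \ge 0$. Since $V^{+,l} \ge 0$ by construction, being a sum of non-negative parts scaled by $\dt/|c| > 0$ (see~\cref{eqn:centred:advection_inoutgoing_volume}), the update then gives $\volfrac^{l,(n+1)} = (\volfrac^{l,(n)} - V^{-,l}) + V^{+,l} \ge V^{+,l} \ge 0$, which is the claim.

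The upper bound~\cref{eqn:interface:properties:bound_above} I would deduce by applying the same reasoning to the gas phase together with the partition-of-unity property $\mean{\volfrac} = 1$. The key observation is that the flux overlap and transit errors~\cref{eqn:interface:onefluid:dr:overlap,eqn:interface:onefluid:dr:transit} are phrased purely in terms of the donating regions $\drOne^{(n)}_f$ and make no reference to the phase; consequently the lower-bound argument above applies verbatim with $\pi = g$, yielding $\volfrac^{g,(n+1)} \ge V^{+,g} \ge 0$. It then remains to transfer this estimate to the liquid phase through $\volfrac^{l,(n+1)} = 1 - \volfrac^{g,(n+1)}$.

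The crux — and the only place the additional no-flux-volume-error hypothesis is needed — is showing that the scheme preserves the partition of unity, i.e. $\mean{\volfrac}^{(n+1)} = 1$. For this I would sum the update over $\pi = l, g$ to arrive at~\cref{eqn:interface:err:div_vol}; the absence of flux volume errors gives $\mean{\volumefluxOne}^{(n)}_f = \onevelo^{(n)}_f$, so that $\divh\mean{\volumefluxOne}^{(n)} = \divh\onevelo^{(n)} = 0$ by the discrete incompressibility constraint, and therefore $\mean{\volfrac}^{(n+1)} = \mean{\volfrac}^{(n)} = 1$. Substituting $\volfrac^{l,(n+1)} = 1 - \volfrac^{g,(n+1)}$ into the gas lower bound finally yields $\volfrac^{l,(n+1)} \le 1 - V^{+,g} \le 1$, completing the proof.
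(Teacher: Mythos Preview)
Your proof is correct and follows essentially the same approach as the paper: apply \cref{thm:mass:bounded_outflow} with $\pi=l$ for the lower bound, then use the absence of flux volume errors (together with $\divh\onevelo^{(n)}=0$) to show that the gas update is consistent with $\volfrac^{g,(n+1)}=1-\volfrac^{l,(n+1)}$, so that \cref{thm:mass:bounded_outflow} with $\pi=g$ yields the upper bound. The only cosmetic difference is that the paper phrases the second step as ``implementing~\cref{eqn:interface:onefluid:mass_cons} for the liquid phase yields the same interface evolution as one would obtain for the gas phase'' (their~\cref{eqn:interface:onefluid:symmetry}), whereas you phrase it as ``the scheme preserves the partition of unity''; these are equivalent.
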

\begin{proof}
  The assumption that no flux overlap nor transit errors are made allows the use of~\cref{thm:mass:bounded_outflow} (with $\pi = l$),
  which implies that the non-negative ingoing flow is bounded by the phase volume at $t = t^{(n+1)}$
  \begin{equation}\label{eqn:interface:properties:intermediate}
    V^{+,l} \stackrel{\eqref{eqn:centred:advection_reorderedsummation}}{=} \volfrac^{l,(n+1)} - (\volfrac^{l,(n)} - V^{-,l}) \stackrel{\eqref{eqn:centred:bounded_outflow}}{\le} \volfrac^{l,(n+1)},
  \end{equation}
  and thus shows that~\cref{eqn:interface:properties:bound_below} holds.

  Satisfying~\cref{eqn:interface:onefluid:volume_enforcement} exactly, and thereby preventing flux volume errors to be made, ensures that implementing~\cref{eqn:interface:onefluid:mass_cons} for the liquid phase yields the same interface evolution as one would obtain for the gas phase, since
  \begin{align}\label{eqn:interface:onefluid:symmetry}
    \volfrac^{g,(n+1)} - \volfrac^{g,(n)} + \dt \divh\volumefluxOne^{g,(n)} &= (1-\volfrac^{l,(n+1)}) - (1-\volfrac^{l,(n)}) + \dt (\divh\mean{\volumefluxOne}^{(n)} -\divh\volumefluxOne^{l,(n)})\\
    &\stackrelwidth{\eqref{eqn:interface:err:div_vol}}{=}-\squarepar{\volfrac^{l,(n+1)} - \volfrac^{l,(n)} + \dt \divh\volumefluxOne^{l,(n)}}\\
    &\stackrelwidth{\eqref{eqn:interface:onefluid:mass_cons}}{=} 0.
  \end{align}
  Hence we can again make use of~\cref{thm:mass:bounded_outflow}, but now with $\pi = g$, to find that the gas volume fraction is bounded from below as well
  \begin{equation}
    \volfrac^{g,(n+1)} \ge {V^{+,g}},
  \end{equation}
  where we have used the same argument as we did in~\cref{eqn:interface:properties:intermediate}.
  We then use that the volume fractions add up to one, $\mean{\volfrac^{(n+1)}} = 1$, to find that the liquid volume fraction is indeed bounded from above
  \begin{equation}
    \volfrac^{l,(n+1)} = 1 - \volfrac^{g,(n+1)} \le 1 - {V^{+,g}}.
  \end{equation}
\end{proof}

We will now consider a few DR approximation methods from the literature, and discuss their properties regarding the different types of fluxing errors.
This then allows us, by making use of~\cref{cor:interface:onefluid:boundedness}, to guarantee boundedness of the volume fraction for some of these methods. 

\subsubsection{Two-dimensional approximate donating regions}\label{sec:int:onefluid:two:twoddr}
\begin{figure}
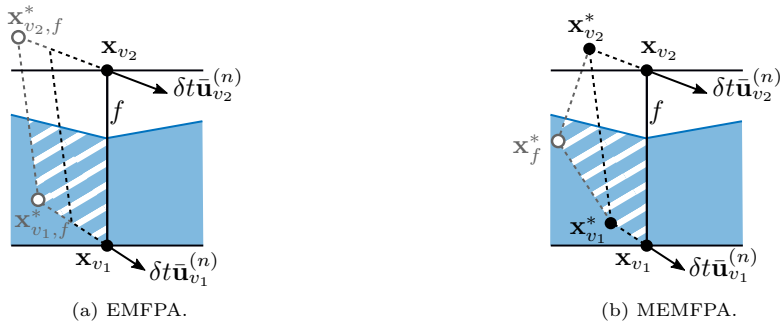

  \subcaptionbox{EMFPA.\label{fig:int:advect:dr:emfpa}}
  [\twofigwidth]{
    \import{inkscape/}{emfpa_example.pdf_tex}
  }
  \subcaptionbox{MEMFPA.\label{fig:int:advect:dr:memfpa}}
  [\twofigwidth]{
    \import{inkscape/}{memfpa_example_2d.pdf_tex}
  }
  \caption{Example two-dimensional DRs with an exaggerated correction.
  The black dashed polygon corresponds to the DR without enforcing~\cref{eqn:interface:onefluid:volume_enforcement} (i.e. $\dt^* = 0$ in~\cref{eqn:int:advect:dr:emfpa_remap}), whereas the gray polygon corresponds to the correction.}
  \label{fig:int:advect:dr:twod}
\end{figure}
Let's first consider the naive construction of a two-dimensional donating region.
A face $f$ is defined by its two vertices $\+x_{v_1}, \+x_{v_2}$, for $v_1, v_2 \in \mathcal{V}(f)$, where $\mathcal{V}(f)$ are the nodes of $f$.
Using bilinear interpolation we approximate the velocity at each of the vertices, resulting in $\Onevelo^{(n)} \in [\mathcal{V}^h]^d$.
Simply using $\+x^*_v = \+x_v - \dt \Onevelo_v^{(n)}$ as the remapped (i.e. integrated backwards in time) vertices to complete the definition of the DR (see the black dashed polygon in~\cref{fig:int:advect:dr:emfpa}) would result in a DR which avoids the flux overlap, gap and transit errors (assuming a simple CFL condition is satisfied).
However a flux volume error is still made, and therefore the result of~\cref{cor:interface:onefluid:boundedness} does not guarantee that the resulting volume fraction is bounded from above.

The edge-matched flux polygon advection (EMFPA)~\citep{Lopez2004} method proposes to fix the aforementioned flux volume error by a novel modification of the DR.
They propose to shift the remapped face, i.e. the face of the DR which is defined by $\+x^*_{v_1}, \+x^*_{v_2}$, in such a way that the volume of the resulting DR matches the total volume flux, as stated in~\cref{eqn:interface:onefluid:volume_enforcement}.
The proposed modification yields the following definition of the remapped vertices
\begin{equation}\label{eqn:int:advect:dr:emfpa_remap}
  \+x^*_{v,f} = \+x_v - (\dt + \dt^*_{v,f}) \Onevelo_v^{(n)},
\end{equation}
where $\dt^*_{v,f}$ is a correction which is defined such that the remapped face tangent is invariant under this correction
\begin{equation}\label{eqn:int:advect:dr:emfpa_parallel}
  \+x^*_{v_2,f} - \+x^*_{v_1,f} \parallel \+x^*_{v_2} - \+x^*_{v_1},
\end{equation}
and such that~\cref{eqn:interface:onefluid:volume_enforcement} holds, see also~\cref{fig:int:advect:dr:emfpa}.
Hence per face $f$ we find two equations (given by~\cref{eqn:int:advect:dr:emfpa_parallel,eqn:interface:onefluid:volume_enforcement}) for the two unknown values of $\dt^*_{v,f}$.
This results in a scalar quadratic polynomial which has real roots except in some degenerate cases. 

The EMFPA method can however commit a flux transit error because $\+x^*_{v}$ is not uniquely defined: the position of the corner of the DR depends on which face $f$ is being considered.
It follows that the EMFPA method does not necessarily lead to bounded volume fractions (neither from below nor from above) since the assumptions of~\cref{cor:interface:onefluid:boundedness} do not hold.

In~\citet{Owkes2014} a modification to the EMFPA method is proposed.
Rather than imposing~\cref{eqn:interface:onefluid:volume_enforcement} by making use of~\cref{eqn:int:advect:dr:emfpa_remap}, a fifth vertex is added at the centroid of the remapped face
\begin{equation}\label{eqn:int:advect:dr:memfpa_enforce}
  \+x^*_f = \frac{1}{|\mathcal{V}(f)|}\sum_{v \in \mathcal{V}(f)} \+x^*_v + \dt^*_f \+n^*_f,
\end{equation}
where $\+n^*_f$ is the vector normal to $\+x^*_{v_2} - \+x^*_{v_1}$.
The signed area of the resulting polygon, consisting of five vertices, is then, up to a constant, a linear function of $\dt^*_f$ and therefore we can easily find a $\dt^*_f$ for which~\cref{eqn:interface:onefluid:volume_enforcement} holds.
A suitable $\dt^*_f$ exists if and only if $\+x^*_{v_1} \neq \+x^*_{v_2}$.
An example is shown in~\cref{fig:int:advect:dr:memfpa}.

This approach alleviates any flux transit errors because the remapped vertex is now uniquely defined (contrary to the remapped vertex, given by~\cref{eqn:int:advect:dr:emfpa_remap}, which was used in the EMFPA method).
Moreover it is still ensured that no flux volume errors are made by ensuring that~\cref{eqn:interface:onefluid:volume_enforcement} holds. 
Hence none of the discussed fluxing errors are made and therefore~\cref{cor:interface:onefluid:boundedness} ensures boundedness of the volume fraction.
There was no name provided to the methods proposed in~\citep{Owkes2014}, and therefore we will refer to both the 2D and 3D method as the modified EMFPA (MEMFPA) method.

\subsubsection{Three-dimensional approximate donating regions}\label{sec:int:onefluid:two:threeddr}
The face matched flux polyhedron advection (FMFPA) method~\citep{hernandez2008} is a 3D generalisation of the EMFPA method.
The additional challenge in 3D is to ensure that the faces of the approximate DR remain planar.
The FMFPA method succeeds in doing so, however this is at the cost of making flux overlap, gap as well as transit errors.
For this reason we use the 3D equivalent of the MEMFPA method instead, as proposed by~\citet{Owkes2014}.

In 3D, the resulting DR is in general no longer convex, and therefore the authors of~\citep{Owkes2014} propose a tetrahedralisation of the non-convex polyhedron such that routines suitable only for convex polyhedra can be used to compute the intersection volume~\cref{eqn:interface:onefluid:volflux}.
We instead use the VOFTools 5 toolbox~\citep{Lopez2020}, which was kindly provided to us by its authors.
This toolbox is able to perform intersections of non-convex polyhedra and therefore we do not need to tetrahedralise our polyhedron prior to intersection.

\subsubsection{The CFL constraint}
We will use the following CFL condition
\begin{equation}\label{eqn:interface:onefluid:dr:cfl}
  \max_{c\in\mathcal{C}} \mathcfl_c < \wycflval,
\end{equation}
where the CFL limit is usually set to $\wycflval = \frac{3}{4}$ and $\mathcfl_c$ denotes the local control volume CFL number, which is defined as
\begin{equation}\label{eqn:interface:onefluid:dr:cfl_cell}
  \mathcfl_c \defeq \frac{\dt}{|c|}\sum_{f\in\mathcal{F}(c)} \posflux{-\orientation_{c,f}|f| \onevelo^{(n)}_f}.
\end{equation}
 
If a DR does not self-intersect, and hence has only one part of a single orientation, then the volume flux is bounded by the velocity $\abs{\volumefluxOne^\pi} \le \abs{\onevelo}$.
The CFL condition~\cref{eqn:interface:onefluid:dr:cfl} then ensures that the change in volume fraction does not exceed $\mathcfl_c$
\begin{equation}\label{eqn:interface:onefluid:dr:cfl_bound}
  \pm(\volfrac^{\pi,(n+1)}_c - \volfrac^{\pi,(n)}_c) = \frac{\dt}{|c|}\sum_{f \in \mathcal{F}(c)} \mp\orientation_{c,f} |f| \volumefluxOne^{\pi,(n)}_f \le \pm\frac{\dt}{|c|}\sum_{f \in \mathcal{F}(c)} \squarepar{-\orientation_{c,f} |f| \volumefluxOne^{\pi,(n)}_f}^\pm \le \mathcfl_c,
\end{equation}
which follows from~\cref{eqn:interface:onefluid:mass_cons}.
These bounds do not always hold (a DR is permitted to self-intersect), but it illustrates the usefulness of the CFL condition~\eqref{eqn:interface:onefluid:dr:cfl}.
We find that in practise the bound~\cref{eqn:interface:onefluid:dr:cfl_bound} holds almost always, and always holds when replacing $\mathcfl_c$ by $\wycflval$.

\subsection{Application to mass transport (\onefluid model)}
Provided with a discretisation of~\cref{eqn:interface:mass_cons} we can now formulate the resulting equation which governs mass transport.
We define the mass flux $\massfluxOne^\pi \in \mathcal{F}^h$ as follows
\begin{equation}\label{eqn:mass:onefluid:massflux_def}
  \massfluxOne^\pi \defeq \rho^\pi \volumefluxOne^\pi.
\end{equation}
We find that conservation of the centred mass is given by
\begin{equation}\label{eqn:mass:onefluid:mass_cons}
  \frac{(\volfrac\rho)^{\pi,(n+1)} - (\volfrac\rho)^{\pi,(n)}}{\dt} + \advection{\massfluxOne^\pi}1 = 0,
\end{equation}
which follows from multiplying~\cref{eqn:interface:onefluid:mass_cons} by the constant $\rho^\pi$.
Here we have defined the advection operator $\advection{m}: \mathcal{C}^h \rightarrow \mathcal{C}^h$ as 
\begin{equation}\label{eqn:mass:advection_def}
  \advection{m}\avar \defeq \divh(m \fluxinterp \avar),
\end{equation}
where $\fluxinterp: \mathcal{C}^h \rightarrow \mathcal{F}^h$ denotes the flux interpolant for which the staggered equivalent will be defined in~\cref{sec:momentum}.
For now it is sufficient to know that this flux interpolant will interpolate a constant field exactly: $\fluxinterp 1 = 1$.

\subsection{Geometric VOF for the \twofluid model}\label{sec:interface:twofluid}
We will now consider the advection of the interface in the presence of a velocity discontinuity. 
That is, we have two staggered velocity fields $u^l, u^g \in \mathcal{F}^h$ where a value $u_f^\pi$ exists if and only if the corresponding staggered volume fraction $\volfracstag^\pi_f \in \mathcal{F}^h$ is non-zero. 
The staggered volume fraction $\volfracstag^\pi \in \mathcal{F}^h$ is defined as the volume weighted average of the centred volume fraction
\begin{equation}\label{eqn:interface:twofluid:volfracstag}
  \volfracstag^\pi \defeq \interpolantsbp\volfrac^\pi.
\end{equation}

The two velocity fields are continuous in the interface normal direction (see~\cref{eqn:intro:normal_smoothness}), and the movement of the interface depends only on this interface normal component. 
Therefore we are free to use either the liquid velocity, gas velocity, or a linear combination thereof for the advection of the interface. 
We choose to advect the interface using the liquid velocity field, corresponding to the heaviest of the two phases, and track the liquid volume fraction field.

\begin{figure}
  \subcaptionbox{Definition of the face aperture.\label{fig:interface:twofluid:cutcell_aperture}}
  [\twofigwidth]{
\begingroup%
  \makeatletter%
  \providecommand\color[2][]{%
    \errmessage{(Inkscape) Color is used for the text in Inkscape, but the package 'color.sty' is not loaded}%
    \renewcommand\color[2][]{}%
  }%
  \providecommand\transparent[1]{%
    \errmessage{(Inkscape) Transparency is used (non-zero) for the text in Inkscape, but the package 'transparent.sty' is not loaded}%
    \renewcommand\transparent[1]{}%
  }%
  \providecommand\rotatebox[2]{#2}%
  \newcommand*\fsize{\dimexpr\f@size pt\relax}%
  \newcommand*\lineheight[1]{\fontsize{\fsize}{#1\fsize}\selectfont}%
  \ifx\svgwidth\undefined%
    \setlength{\unitlength}{132.40209698bp}%
    \ifx\svgscale\undefined%
      \relax%
    \else%
      \setlength{\unitlength}{\unitlength * \real{\svgscale}}%
    \fi%
  \else%
    \setlength{\unitlength}{\svgwidth}%
  \fi%
  \global\let\svgwidth\undefined%
  \global\let\svgscale\undefined%
  \makeatother%
  \begin{picture}(1,0.50693777)%
    \lineheight{1}%
    \setlength\tabcolsep{0pt}%
    \put(0,0){\includegraphics[width=\unitlength,page=1]{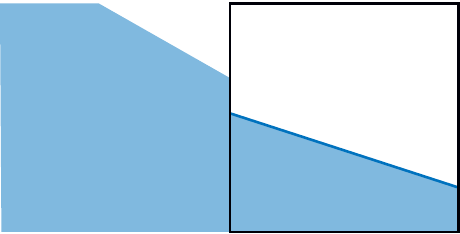}}%
    \put(0.55800033,0.38402477){\color[rgb]{0,0,0.10980392}\makebox(0,0)[lt]{\lineheight{1.25}\smash{\begin{tabular}[t]{l}$f^g$\end{tabular}}}}%
    \put(0,0){\includegraphics[width=\unitlength,page=2]{cutcell_aperture.pdf}}%
    \put(0.38562297,0.13654514){\color[rgb]{0,0,0.10980392}\makebox(0,0)[lt]{\lineheight{1.25}\smash{\begin{tabular}[t]{l}$f^l$\end{tabular}}}}%
    \put(0.40350435,0.42550979){\color[rgb]{0,0.44705882,0.74509804}\makebox(0,0)[lt]{\lineheight{1.25}\smash{\begin{tabular}[t]{l}$\+\eta_{c_1}$\end{tabular}}}}%
    \put(0.85697869,0.17946067){\color[rgb]{0,0.44705882,0.74509804}\makebox(0,0)[lt]{\lineheight{1.25}\smash{\begin{tabular}[t]{l}$\+\eta_{c_2}$\end{tabular}}}}%
  \end{picture}%
\endgroup%

  }
  \hfill
  \subcaptionbox{Boundary integral split into two parts.\label{fig:interface:twofluid:cutcell_divergence}}
  [\twofigwidth]{
\begingroup%
  \makeatletter%
  \providecommand\color[2][]{%
    \errmessage{(Inkscape) Color is used for the text in Inkscape, but the package 'color.sty' is not loaded}%
    \renewcommand\color[2][]{}%
  }%
  \providecommand\transparent[1]{%
    \errmessage{(Inkscape) Transparency is used (non-zero) for the text in Inkscape, but the package 'transparent.sty' is not loaded}%
    \renewcommand\transparent[1]{}%
  }%
  \providecommand\rotatebox[2]{#2}%
  \newcommand*\fsize{\dimexpr\f@size pt\relax}%
  \newcommand*\lineheight[1]{\fontsize{\fsize}{#1\fsize}\selectfont}%
  \ifx\svgwidth\undefined%
    \setlength{\unitlength}{67.23459213bp}%
    \ifx\svgscale\undefined%
      \relax%
    \else%
      \setlength{\unitlength}{\unitlength * \real{\svgscale}}%
    \fi%
  \else%
    \setlength{\unitlength}{\svgwidth}%
  \fi%
  \global\let\svgwidth\undefined%
  \global\let\svgscale\undefined%
  \makeatother%
  \begin{picture}(1,1.00000017)%
    \lineheight{1}%
    \setlength\tabcolsep{0pt}%
    \put(0,0){\includegraphics[width=\unitlength,page=1]{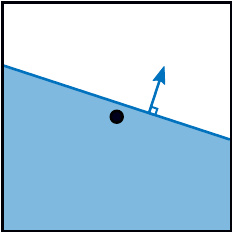}}%
    \put(0.71325294,0.5592951){\color[rgb]{0,0.44705882,0.74509804}\makebox(0,0)[lt]{\lineheight{1.25}\smash{\begin{tabular}[t]{l}$\+\eta_{c}$\end{tabular}}}}%
    \put(0,0){\includegraphics[width=\unitlength,page=2]{cutcell_divergence.pdf}}%
    \put(0.06049272,0.14162407){\color[rgb]{0.85490196,0.3254902,0.09803922}\makebox(0,0)[lt]{\lineheight{1.25}\smash{\begin{tabular}[t]{l}$\integral{\partial c^l \setminus I}{u^l_n}{S}$\end{tabular}}}}%
    \put(0.13997184,0.82765447){\color[rgb]{0.49411765,0.18431373,0.55686275}\makebox(0,0)[lt]{\lineheight{1.25}\smash{\begin{tabular}[t]{l}$\integral{\partial c^g \setminus I}{u^g_n}{S}$\end{tabular}}}}%
    \put(11.0319699,-8.75232437){\color[rgb]{0,0,0}\makebox(0,0)[lt]{\begin{minipage}{6.15543658\unitlength}\raggedright \end{minipage}}}%
  \end{picture}%
\endgroup%

  }
  \caption{Illustration of the cut-cell method applied to the approximation of the divergence constraint given by~\cref{eqn:interface:twofluid:cutcell:div_approx}.}
  \label{fig:interface:twofluid:cutcell}
\end{figure}
The velocities $u^l, u^g$ are divergence free in the sense of~\cref{eqn:interface:mix_divergence}.
The divergence constraint~\cref{eqn:interface:mix_divergence} can be written as a sum of two boundary integrals, where we sharply distinguish the integration over the liquid and gaseous parts of the boundary
\begin{equation}
  \integral{\partial c}{u_n}{S} = \integral{\partial c^l \setminus I}{u^l_n}{S} + \integral{\partial c^g \setminus I}{u^g_n}{S}.
\end{equation}
Hence for the sharp discretisation of the boundary integral we need to sharply identify which part of a face $f \subset \partial c$ is inside the liquid, and which part is in the gas phase.
To this end we introduce the face aperture $a^\pi \in \mathcal{F}^h$ which equals the fraction of the face that is occupied by the $\pi$-phase
\begin{equation}
  a^{\pi,(n)}_f \defeq \frac{|f \cap \Omega^{\pi,(n)}|}{|f|},
\end{equation}
with\footnote{Our proposed methods allow for the presence of arbitrary geometry, which is modelled implicitly using the cut-cell method~\citep{Kleefsman2005,Droge2007,Crockett2011}.
This means that in such cut-cells we find $\mean{a^{(n)}} < 1$, as the remainder of the face is filled by the geometry.
Throughout this paper it will however be assumed that each control volume consists entirely of liquid and/or gas for ease of discussion.} $\mean{a^{(n)}} = 1$.
Note that since a {piecewise} reconstruction of the interface is used, the face aperture is averaged from the two approximations from each side of the face, see also~\cref{fig:interface:twofluid:cutcell_aperture}.

The face apertures allow for the following cut-cell~\citep{Crockett2011} approximation of~\cref{eqn:interface:mix_divergence}
\begin{equation}\label{eqn:interface:twofluid:cutcell:div_approx}
  |c|\divh(\mean{a u})_c = 0,
\end{equation}
which will be used in a future paper to impose the divergence constraint on the velocity field.
The cut-cell divergence operator is illustrated in~\cref{fig:interface:twofluid:cutcell_divergence}.
Using this divergence constraint implies that the divergence per phase does not vanish at the interface: $\divh u^\pi \neq 0$, and we can therefore not directly re-use the previously discussed interface advection method, because an absence of flux volume errors cannot be guaranteed, resulting in a loss of boundedness according to~\cref{cor:interface:onefluid:boundedness}.

Instead, we propose to solve the following additional Poisson problem, which projects the liquid velocity field $u^l$ into the space of divergence free velocity fields
\begin{equation}
  \def\arraystretch{1.35}
  \left\lbrace
  \begin{array}{rlll}
    \widehat{u}_f^{l,(n)} &= u_f^{l,(n)} - (\gradh\widehat{p})_f & \forall f \in \mathcal{F} &\text{for which $\volfracstag^{l,(n)}_f > 0$} \\
    (\divh\widehat{u}^{l,(n)})_c &= 0 & \forall c \in \mathcal{C} &\text{for which $\volfrac^{l,(n)}_c > 0$}\\
    \widehat{p}_c &= 0 & \forall c \in \mathcal{C} &\text{for which $\volfrac^{l,(n)}_c = 0$}
  \end{array}\right.,
\end{equation}
resulting in the `extrapolated' velocity field $\widehat{u}^{l,(n)}$.
Recall that the gradient operator $\gradh$ was defined in~\cref{eqn:notation:notation:gradient}.
We will now show that the use of this Poisson problem is sufficient for guaranteeing boundedness of the volume fraction. 
If the centred volume fraction is non-zero $\volfrac^{l,(n)}_c > 0$ then $(\divh\widehat{u})_c^{l,(n)} = 0$ and therefore no flux volume errors are made, resulting in boundedness of the volume fraction according to~\cref{cor:interface:onefluid:boundedness}.
On the other hand, if the centred volume fraction is initially zero $\volfrac^{l,(n)}_c = 0$, then flux volume errors are made because the velocity field is not divergence free (note that missing velocities whose staggered volume fraction is zero are computed using constant extrapolation), but~\cref{thm:mass:bounded_outflow} still guarantees that the volume fraction will be bounded from below.
Moreover, by the CFL constraint~\cref{eqn:interface:onefluid:dr:cfl,eqn:interface:onefluid:dr:cfl_cell} as well as~\cref{eqn:interface:onefluid:dr:cfl_bound} we find that $\volfrac^{\pi,(n+1)}_c \le \wycflval \le 1$.

We can therefore apply the dimensionally unsplit interface advection method, as discussed in~\cref{sec:interface:onefluid}, where we use the extrapolated velocity field $\widehat{u}^{l,(n)}$.
The resulting DRs are denoted as $\drTwo^{l,(n)}_f$.
From these DRs we define the following liquid volume flux (cf.~\cref{eqn:interface:onefluid:volflux})
\begin{equation}
  \dt |f|\volumefluxTwo^{l,(n)}_f \defeq M_0(\drTwo^{l,(n)}_f \cap \Omega^{l,(n)}),
\end{equation}
resulting in the following interface advection equation (cf.~\cref{eqn:interface:onefluid:mass_cons})
\begin{equation}\label{eqn:interface:twofluid:mass_cons}
  \frac{\volfrac^{l,(n+1)} - \volfrac^{l,(n)}}{\dt} + \divh\volumefluxTwo^{l,(n)} = 0.
\end{equation}

\subsection{Application to mass transport (\twofluid model)}\label{sec:mass:twofluid}
For the \twofluid formulation we find that the transport of the centred liquid mass is given by
\begin{equation}\label{eqn:mass:twofluid:mass_cons}
  \frac{(\volfrac\rho)^{l,(n+1)} - (\volfrac\rho)^{l,(n)}}{\dt} + \advection{\massfluxTwo^l}1 = 0,
\end{equation}
which follows from multiplying~\cref{eqn:interface:twofluid:mass_cons} by $\rho^l$.
Here the liquid mass flux is defined as
\begin{equation}
  \massfluxTwo^l \defeq \rho^l \volumefluxTwo^l.
\end{equation}
Note that the divergence free liquid velocity $\widehat{u}^{l,(n)}$ is available only inside the liquid phase (only on those faces $f$ for which $\volfracstag_f^{l,(n)}>0$), and therefore the gas mass fluxes inside the gas phase cannot be obtained using $\widehat{u}^{l,(n)}$.
On the other hand we do know the gas mass inside the gas phase
\begin{equation}\label{eqn:mass:twofluid:gas_mass}
  (\volfrac\rho)^{g,(n+1)} = (1-\volfrac^{l,(n+1)})\rho^g,
\end{equation}
which is conserved as a consequence of $\volfrac^{l,(n+1)}$ being conserved.

For the transport of momentum, which will be discussed in~\cref{sec:momentum}, we will need to know not only the gas mass, but also the gas mass fluxes inside the gas phase, and to this end we define the gas mass flux as
\begin{equation}
  \massflux^g \defeq \rho^g \volumeflux^g,
\end{equation}
where the volume fluxes $\volumeflux^g$ follow from using the DR method inside the gas phase with the gas velocity field $u^g$.
Hence on faces at the interface we compute two DRs: one DR is constructed using the divergence free liquid velocity $\widehat{u}^l$, resulting in the liquid mass fluxes $\massfluxTwo^l$, and another DR is constructed using the gas velocity $u^g$, resulting in the gas mass fluxes $\massflux^g$.
The resulting centred mass, according to the gas mass fluxes, is denoted by $(\volfrac \rho)^{*,\pi}$, for which
\begin{equation}\label{eqn:mass:twofluid:mass_cons_gas}
  \frac{(\volfrac\rho)^{g,*} - (\volfrac\rho)^{g,(n)}}{\dt} + \advection{\massflux^g}1 = 0.
\end{equation}
The gas velocity field is not divergence free (contrary to $\widehat{u}^l$), but if we do not enforce the volume of the resulting DRs, and thereby permit flux volume errors, then this is not a problem: the presence of flux volume errors means that the volume fraction may exceed one, but it is still ensured to be bounded from below by zero according to~\cref{cor:interface:onefluid:boundedness}.
Note that $\volfrac^{g,*}$ as well as $1-\volfrac^{l,(n+1)}$ are consistent estimates of the gas volume fraction at $t = t^{(n+1)}$.
However only the latter, by which the interface is defined, is bounded from above as well as below.
  \section{Transport of momentum}\label{sec:momentum}
We will now focus our attention on the conservative advection of some (possibly discontinuous) staggered (i.e. defined at the faces of the control volume) scalar $\avar$, which is modelled by~\cref{eqn:intro:conservation_eqn}.
The control volume now coincides with the staggered control volume $\omega = \omega_f$, resulting in
\begin{equation}\label{eqn:momentum:scalar_advec}
  \integral{\omega_f^{\pi,(n+1)}}{\rho^\pi \avar^{\pi}}{V} - \integral{\omega_f^{\pi,(n)}}{\rho^\pi \avar^{\pi}}{V} + \sum_{g\in\mathcal{G}(\omega_f)}\oneDIntegral{t^{(n)}}{t^{(n+1)}}{\integral{g^\pi(t)}{\rho^\pi u_n^\pi \avar^\pi }{S}}{t} = 0.
\end{equation}
We denote the staggered \emph{advected} quantity by $\avarstag^\pi \in \mathcal{F}^h$ to emphasise the distinction with the \emph{advecting} velocity field $u^\pi$.
For momentum transport in the \onefluid formulation we let $\avarstag^\pi = \onevelo$, and in the \twofluid formulation we use $\avarstag^\pi = u^\pi$.
We will first consider the \onefluid formulation.

An approximation of~\cref{eqn:momentum:scalar_advec} consists of the approximation of both the volume integral of $(\rho\avar)^\pi$ as well as the space-time integral of $\rho^\pi\onevelo_n \avar^\pi$.
We choose to approximate the former by the product of the integration volume (given by $|\omega_f| \volfracstag_f^{\pi,(n)}$) with the value of $(\rho\avarstag)^\pi$ in the control volume centroid
\begin{equation}\label{eqn:momentum:int_approx}
  |\omega_f|(\volfracstag\rho\avarstag)_f^{\pi,(n)} \approx \integral{\omega_f^{\pi,(n)}}{(\rho\avar)^{\pi,(n)}}{V},
\end{equation}
where $\avarstag^\pi \in \mathcal{F}^h$ is such that
\begin{equation}
  \avarstag^{\pi,(n)}_f \approx \avar^\pi(t^{(n)}, \+x_f),
\end{equation}
and $\volfracstag$ denotes the staggered volume fraction function as defined in~\cref{eqn:interface:twofluid:volfracstag}.
The space-time integral is similarly approximated as the product of the mass flux $\massfluxOneStag^{\pi,(n)} \in \mathcal{G}^h$ (recall that $\mathcal{G}$ is the set of faces of the staggered control volumes) with the value of $\avar^\pi$ at the centroid of the DR (cf.~\cref{eqn:interface:dr:defining_property})
\begin{equation}\label{eqn:momentum:flux_approx}
  \dt |f|\massfluxOneStag_g^{\pi,(n)} (\fluxinterpstag\avarstag^{\pi,(n)})_g \approx \oneDIntegral{t^{(n)}}{t^{(n+1)}}{\integral{g^\pi(t)}{\rho^\pi \onevelo_n \avar^\pi }{S}}{t},
\end{equation}
where $\fluxinterpstag: \mathcal{F}^h \rightarrow \mathcal{G}^h$ is the flux interpolant which interpolates to the centroid of the DR (see~\cref{fig:momentum:flux_interp:example})
\begin{equation}\label{eqn:momentum:unsplit_dr:approx_position_flux}
  (\fluxinterpstag\avarstag^{\pi,(n)})_g \approx \avar^{\pi}(t^{(n)}, \+C^{(n)}_g).
\end{equation}
The DR, which will be denoted by $\drOneStag{}^{(n)}_g$, is now based on the face $g$ of a staggered control volume $\omega_f$, and is formally defined such that (cf.~\cref{eqn:interface:dr:defining_property})
\begin{equation}
  M_0(\drOneStag{}^{(n)}_g \cap \Omega^{\pi,(n)}) = \oneDIntegral{t^{(n)}}{t^{(n+1)}}{\integral{g^{\pi}(t)}{\onevelo_n}{S}}{t},
\end{equation}
holds.
The centroid of the DR $\drOneStag{}^{(n)}_g$ is denoted by $\+C^{(n)}_g$.

In what follows we discuss the approximation of the newly introduced terms in~\cref{eqn:momentum:flux_approx}: the mass flux $\massfluxOneStag$ as well as the flux interpolant $\fluxinterpstag$.
We will then first apply this to the \onefluid formulation, and subsequently generalise the proposed method to the \twofluid formulation.

\subsection{Mass flux computation}
There are several approaches to compute the staggered mass flux, or equivalently, the staggered volume flux.
Originally,~\citet{Rudman1998} proposed a dimensionally split advection method wherein the volume fraction field was defined on a refined grid: every control volume was split into $2^d$ control volumes.
Subsequently the mass fluxes were geometrically computed on the faces of each of the refined control volumes.
Provided with the mass fluxes on a refined grid, one can compute the mass fluxes on the original grid by simply adding the corresponding refined mass fluxes.
In this way the mass fluxes of the centred as well as staggered control volumes can all be computed directly from the refined mass fluxes.
A similar approach was followed by~\citet{Zuzio2020} for a dimensionally split advection method, and by~\citet{Owkes2017} for a dimensionally unsplit advection method.

Alternatively one can construct DRs directly on the faces of the staggered control volume $\omega_f$, and define the mass fluxes for the staggered control volume in this way, resulting in
\begin{equation}
  \dt|g|\stagger\massfluxOne^\pi_g = \rho^\pi M_0(\drOneStag{}^{(n)}_g \cap \Omega^{\pi,(n)}).
\end{equation}
This would result in the following staggered advection of $\avarstag^\pi$
\begin{equation}\label{eqn:momentum:geometric}
  \frac{(\volfracstag\rho)^{\pi,*}\avarstag^{\pi,(n+1)} - (\volfracstag\rho\avarstag)^{\pi,(n)}}{\dt} + \stagger \divh\roundpar{\stagger\massfluxOne^{\pi,(n)}\fluxinterpstag{}\avarstag^{\pi,(n)}} = 0,
\end{equation}
where the staggered divergence operator $\stagger \divh$ is as defined in~\cref{eqn:notation:notation:divergence_stag}.
This approach is followed by~\citet{Arrufat2021} where it is applied to a dimensionally split advection method.
Note that in following this approach, the staggered volume fraction must be defined geometrically from the intersection of the staggered control volume with the phase domain.
The downside of this approach is that the staggered mass, which follows from letting $\avarstag = 1$ in~\cref{eqn:momentum:geometric}, will become out of sync with the centred mass which itself evolves according to~\cref{eqn:mass:onefluid:mass_cons}. 
This means that the staggered mass must be reset after every time step, resulting in a loss of momentum conservation.

We instead propose a third alternative which is based on simply averaging the mass fluxes (recall that the interpolant $\interpolantflux$ is as defined in~\cref{eqn:notation:notation:interpolantsbp_stag})
\begin{equation}\label{eqn:momentum:onefluid:staggered_massflux}
  \massfluxOneStag^\pi \defeq \interpolantflux\massfluxOne^\pi,
\end{equation}
for which the staggered advection equation is given by
\begin{equation}\label{eqn:momentum:onefluid:transport}
  \frac{(\volfracstag\rho\avarstag)^{\pi,(n+1)} - (\volfracstag\rho\avarstag)^{\pi,(n)}}{\dt} + \advectionstag{\massfluxOne^{\pi,(n)}}\avarstag^{\pi,(n)} = 0,
\end{equation}
where we define the staggered advection operator $\advectionstag{\massflux}: \mathcal{F}^h \rightarrow \mathcal{F}^h$ as
\begin{equation}\label{eqn:mass:advection_stagdef}
  \advectionstag{\massflux} \avarstag \defeq \stagger \divh\roundpar{(\interpolantflux \massflux) (\fluxinterpstag \avarstag)}.
\end{equation}
In~\cref{sec:app:quadratic} we show that for this advection operator the corresponding semi-discrete formulation preserves quadratic invariants, such as kinetic energy, provided that the LW flux interpolant (which will be introduced next) is used.

This approach is reminiscent of the flux interpolation used in the cut-cell method from~\citet{Droge2005}. 
Therein a symmetry preserving convection operator near a solid boundary (rather than a moving phase interface) is proposed where the volume fluxes used in the convection operator are simply averaged from the volume fluxes that are defined on the faces of the centred control volumes.
The advantage of doing so, is that the divergence operator, which is based on the volume fluxes of the centred control volumes, shows up in the convection operator, resulting in a skew-symmetric operator if the volume fluxes are divergence free.
Simply averaging the volume fluxes may seem inconsistent at first, but a geometric interpretation is provided in~\citep{Droge2005,Cheny2010}.

Recall from~\caref{eqn:notation:notation:operator_connection} that the centred and staggered divergence operators are related, and therefore (moreovoer using~\cref{eqn:mass:advection_def,eqn:mass:advection_stagdef})
\begin{equation}\label{eqn:mass:onefluid:advection_property}
  \advectionstag{m}1 = \interpolantsbp{}(\advection{m}1), \quad \forall m \in \mathcal{F}^h.
\end{equation}
This implies that the staggered mass transport equation (which follows from substituting $\avarstag = 1$ in~\cref{eqn:momentum:onefluid:transport}) can also be written as
\begin{equation}\label{eqn:mass:onefluid:mass_cons_stag}
  \frac{(\volfracstag\rho)^{\pi,(n+1)} - (\volfracstag\rho)^{\pi,(n)}}{\dt} + \interpolantsbp{}(\advection{\massfluxOne^{\pi,(n)}}1) = 0,
\end{equation}
and therefore the evolution equation for the staggered mass can equivalently be obtained from application of the interpolation operator $\interpolantsbp{}$ to~\cref{eqn:mass:onefluid:mass_cons}.
This means that, unlike the method proposed in~\citet{Arrufat2021}, we need not reset the staggered mass after every time step to ensure that it remains in sync with the centred mass, thereby obtaining exact conservation of linear momentum, without the need for subgrid mass fluxes as was used by e.g.~\citet{Rudman1998}.

\subsection{Flux interpolation}
All that remains for the discretisation of~\cref{eqn:momentum:scalar_advec} is to define the flux interpolant, which interpolates the value of $\avarstag^\pi$ to the centroid of the DR for each of the faces of a control volume.
We let the centroid of the DR be approximated by
\begin{equation}\label{eqn:momentum:dr_centroid}
  \+C^{(n)}_g \defeq \+x_g - \frac{\dt}{2} \Onevelo^{(n)}_g,
\end{equation}
where $\Onevelo^{(n)} \in [\mathcal{G}^h]^d$ denotes an interpolated vector-valued velocity.

From~\cref{eqn:momentum:onefluid:transport} it follows that the value of $\avarstag^{\pi,(n+1)}$ can be computed as follows
\begin{equation}\label{eqn:centred:approx_dr_solve}
  \avarstag^{\pi,(n+1)} = \frac{(\volfracstag\rho\avarstag)^{\pi,(n)} - \dt \advectionstag{\massfluxOne^{\pi,(n)}}\avarstag^{\pi,(n)}}{(\volfracstag\rho)^{\pi,(n+1)}}.
\end{equation}
Hence if a staggered control volume is drained of the $\pi$-phase during a single time step, i.e. $\volfracstag^{\pi,(n+1)}_f \rightarrow 0$, we must somehow guarantee that the numerator in the right-hand side of~\cref{eqn:centred:approx_dr_solve} tends to zero as well to make sure that $\avarstag^{\pi,(n+1)}_f$ is well-defined.
In terms of the transport of momentum this relates to ensuring that the velocity can be obtained from the momentum and mass without any problems.
We will refer to this as the `boundedness property' of the flux interpolant

We will first discuss the construction of the flux interpolant away from the interface, that is, where we can guarantee that $\volfracstag^{\pi,(n+1)}_f$ does not vanish.
Then we will consider a different (more dissipative) flux interpolant which is to be used at the interface, and is guaranteed to be bounded.
Finally we will switch between both flux interpolants in such a way that we only use the more dissipative flux interpolant when necessary, while still guaranteeing boundedness.

\subsubsection{Flux interpolant away from the interface}\label{sec:momentum:unsplit_flux_interp}
Away from the interface we need not worry about boundedness, and therefore the construction of the flux interpolant is focused on obtaining a second-order accurate approximation of $\avar^\pi$ at the centroid of the DR.
The flux interpolant that we propose is constructed in two steps: first we compute one downwind and two upwind approximations of $\avar^\pi$ that all lie on a single line containing the centroid of the DR, then we interpolate along this line to obtain a second-order approximation of $\avar^\pi$ at the centroid of the DR.

\begin{figure}
  \centering
  \def\fluid{}
  \import{inkscape/}{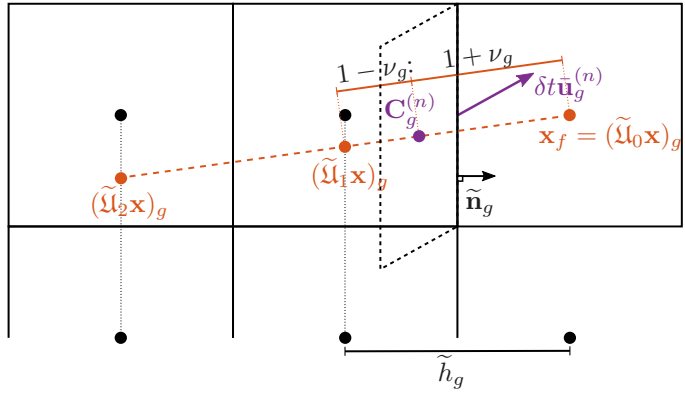}
  \caption{Example of a DR (black dashed lines) defined on some face $g \in \mathcal{G}(\omega_f)$.
  The downwind interpolant $\tanginterp_0$ simply yields the value downwind of the face $g$, whereas the $i$-th upwind interpolant $\tanginterp_i$ is defined as the interpolant that linearly interpolates along the $i$-th black dotted line to the point of intersection with the red dashed line, for $i = 1, 2$.
  The red dashed interpolation line is defined by the downwind node $\+x_f$ as well as the centroid $\+C^{(n)}_g$ of the DR.}
  \label{fig:momentum:flux_interp:example}
\end{figure}
We let the downwind approximation, which we denote by $(\tanginterp_0\avarstag^\pi)_g$, coincide with the value $\avarstag_f^\pi$, where $f \in \mathcal{F}^\omega(g)$ (the set $\mathcal{F}^\omega(g)$ is illustrated in~\cref{fig:notation:notation:set_stag_omega}) corresponds to the staggered control volume $\omega_f$ which is downwind from $g$, as illustrated in~\cref{fig:momentum:flux_interp:example}.
Since the centroid of the DR is assumed to lie on the interpolation line, it follows that $(\tanginterp_0\+x)_g$ and $\+C^{(n)}_g$ entirely define the interpolation line as indicated by the dashed line in~\cref{fig:momentum:flux_interp:example}.
The first upwind value $(\tanginterp_1\avarstag^\pi)_g$ is obtained via (bi)linear interpolation using the two (four in 3D) neighbouring values of $\avarstag^{\pi}$ that lie in the plane with normal $\+n_g$ and passes through the centroid of the upwind control volume.
The linear interpolation is indicated by the dotted vertical lines in~\cref{fig:momentum:flux_interp:example}.
The second upwind value $(\tanginterp_2\avarstag^\pi)_g$ is computed similarly (when needed).
Note that it may happen that the upwind value $(\tanginterp_2\avarstag^\pi)_g$ is missing because the corresponding staggered control volumes do not contain the $\pi$-phase, in this case we resort to using only $(\tanginterp_1\avarstag^\pi)_g$.

We are now ready to perform the second interpolation step.
The simplest second-order accurate approximation to the value of $\avarstag^\pi$ at the centroid of the DR is given by (assuming a uniform mesh)
\begin{equation}\label{eqn:centred:flux_interp:interp_lw}
  \stagger\fluxinterp^{\mtext{LW}}\avarstag \defeq \frac{1}{2}\roundpar{1 - \mathcfl} \tanginterp_0\avarstag + \frac{1}{2}\roundpar{1 + \mathcfl}\tanginterp_1\avarstag,
\end{equation}
where $\mathcfl^{(n)} \in \mathcal{G}^h$ denotes the CFL number per face and is given by
\begin{equation}\label{eqn:interface:onefluid:dr:cfl_face}
  \mathcfl^{(n)} \defeq \frac{\dt \interpolantflux\onevelo^{(n)}}{\stagger h}.
\end{equation}
\Cref{eqn:centred:flux_interp:interp_lw} can be seen as a multidimensional generalisation of the Lax--Wendroff (LW) flux interpolant (see e.g.~\citet{Leveque2004}).
The second upwind value can be included in the approximation in the following way
\begin{equation}\label{eqn:centred:unsplit:hi_res_interpolant}
  \stagger\fluxinterp^{\mtext{Method}}\avarstag \defeq \tanginterp_1^{}\avarstag + \frac{1}{2}\roundpar{1 - \mathcfl} \Psi^\mtext{Method}\roundpar{\iota} (\tanginterp_0^{}\avarstag - \tanginterp_1^{}\avarstag),
\end{equation}
where $\Psi^\mtext{Method}\roundpar{\iota}$ denotes the flux limiter function which is a function of the ratio of the slopes
\begin{equation}
  \iota = \frac{\tanginterp_1\avarstag - \tanginterp_2\avarstag}{\tanginterp_0\avarstag - \tanginterp_1\avarstag} \frac{\abs{\tanginterp_0\+x - \tanginterp_1\+x}_2}{\abs{\tanginterp_1\+x - \tanginterp_2\+x}_2}.
\end{equation}
The following flux limiter functions~\citep{Leveque2004} are considered
\begin{align}
  \Psi^\mtext{LW}(\iota) &\defeq 1\\
  \Psi^\mtext{Fromm}(\iota) &\defeq \frac{1 + \iota}{2}\\
  \Psi^\mtext{MC}(\iota) &\defeq \max\roundpar{0, \min\roundpar{\Psi^\mtext{Fromm}(\iota), 2, 2\iota}}\\
  \Psi^\mtext{Upwind}(\iota) &\defeq 0,
\end{align}
where the LW flux simply interpolates between $\tanginterp_0\avarstag$ and $\tanginterp_1\avarstag$ and coincides with \cref{eqn:centred:flux_interp:interp_lw}, the Fromm flux uses the average of the central and upwind gradient, the monotonised central (MC) flux limits the Fromm flux to the TVD region of the Sweby diagram~\citep{Sweby1984} (or normalised variable diagram) and the upwind flux simply uses the upwind value $\tanginterp_1\avarstag$.
Note that only the upwind and MC fluxes are really limited.

\subsubsection{Flux interpolant at the interface}\label{sec:momentum:flux_interp_ctu}
Using the previously introduced flux interpolant \cref{eqn:centred:unsplit:hi_res_interpolant} near the interface may easily result in an unbounded value of $\avarstag^\pi$ when the control volume is drained of the $\pi$-phase, even if the upwind interpolant is used.

One of the culprits of such unboundedness is the fact that fluid that is merely in transit still affects the final value of $\avarstag^\pi$.
This could be prevented by explicitly computing the volume of the fluid that is in transit and introducing a diagonal flux which bypasses the control volume $c$ altogether.
However this requires intersection of neighbouring DRs, which would be cumbersome and expensive.

To this end we now introduce the corner transport upwind (CTU) flux interpolant, for which the resulting advection method is reminiscent of the CTU method~\citep{Leveque1996a}.
The CTU flux interpolant is defined as follows
\begin{equation}\label{eqn:centred:ctu_flux}
  (\stagger\fluxinterp^\mtext{CTU} \avarstag^{\pi,(n)})_g \defeq \frac{\sum_{k \in \mathcal{F}^2(g)} \volumefluxOneStag^{\pi,(n)}_{g,k}\avarstag^{\pi,(n)}_k}{\volumefluxOneStag^{\pi,(n)}_g},
\end{equation}
where the volume fluxes $\volumefluxOneStag^{\pi,(n)} \in \mathcal{G}^h$ are interpolated as (cf.~\cref{eqn:momentum:onefluid:staggered_massflux})
\begin{equation}\label{eqn:centred:volfluxstag}
  \volumefluxOneStag^{\pi,(n)} \defeq \interpolantflux \volumefluxOne^{\pi,(n)}, 
\end{equation}
and $\mathcal{F}^2(g) \subset \mathcal{F}$ denotes the set of faces whose corresponding staggered control volumes share at least one vertex with the face $g \in \mathcal{G}$ (hence resulting in $2 \times 3^{d-1}$ faces on a rectilinear mesh).
The partial volume fluxes (from~\cref{eqn:centred:volume_flux_per_cell}) are interpolated in a similar fashion.
This results in a $\avarstag^\pi$-flux which is simply the sum over all the signed areas multiplied by their respective $\avarstag^\pi$-values.
In particular this means that if neighbouring DRs overlap with opposite relative orientation, then their combined contribution does not contain any contribution from this overlapping region.

The following lemma shows that the CTU flux interpolant always results in boundedness, under the assumption that the DRs do not contain any flux overlap nor transit errors.
Here the set $\mathcal{F}(f)$ denotes those faces whose corresponding staggered control volume shares at least one vertex with $\omega_f$ (hence resulting in $3^d$ neighbouring faces on a rectilinear mesh).
\begin{restatable}[Boundedness of the CTU flux interpolant]{lemma}{lemmactuboundedness}\label{lem:momentum:ctu_boundedness}
  The CTU flux interpolant~\cref{eqn:centred:ctu_flux} results in boundedness of $\avarstag^{\pi,(n+1)}$ in the following sense
  \begin{eqnarray*}
    \abs{\avarstag_f^{\pi,(n+1)} - \avarstag_f^{\pi,(n)}} \le \max_{k\in\mathcal{F}(f)} \abs{\avarstag^{\pi,(n)}_k - \avarstag_f^{\pi,(n)}},
  \end{eqnarray*}
  provided that the approximate DRs do not contain any flux overlap nor transit errors, and that the CFL constraint given by~\cref{eqn:interface:onefluid:dr:cfl} is satisfied for $\wycflval \le 1$.
\end{restatable}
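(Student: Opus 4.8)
The plan is to turn the update rule into a discrete maximum-principle estimate of convex-combination type, mirroring the centred analysis behind \cref{thm:mass:bounded_outflow}. First I would substitute the CTU flux \cref{eqn:centred:ctu_flux} into the staggered transport equation \cref{eqn:momentum:onefluid:transport}. The crucial simplification is that, since the interpolated mass flux satisfies $\massfluxOneStag^{\pi,(n)}_g = \rho^\pi\volumefluxOneStag^{\pi,(n)}_g$ with $\volumefluxOneStag=\interpolantflux\volumefluxOne$ from \cref{eqn:centred:volfluxstag}, the denominator $\volumefluxOneStag^{\pi,(n)}_g$ in \cref{eqn:centred:ctu_flux} cancels, so the flux through each staggered face $g$ collapses to the bare sum $\rho^\pi\sum_{k}\volumefluxOneStag^{\pi,(n)}_{g,k}\avarstag^{\pi,(n)}_k$ of partial fluxes weighted by the neighbouring face values, and the constant $\rho^\pi$ factors out entirely.

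Second, I would form the increment $\avarstag^{\pi,(n+1)}_f - \avarstag^{\pi,(n)}_f$ by subtracting $\avarstag^{\pi,(n)}_f$ times the staggered mass conservation equation \cref{eqn:mass:onefluid:mass_cons_stag} (which is \cref{eqn:momentum:onefluid:transport} at $\avarstag=1$, for which the CTU interpolant is exactly consistent since the partial fluxes sum to the total). Swapping the order of summation over the staggered faces $g\in\mathcal{G}(\omega_f)$ and the source faces $k$ then yields
\begin{equation*}
  |\omega_f|\volfracstag^{\pi,(n+1)}_f\roundpar{\avarstag^{\pi,(n+1)}_f - \avarstag^{\pi,(n)}_f}
  = -\dt\sum_{k\in\mathcal{F}(f)} F_k\roundpar{\avarstag^{\pi,(n)}_k - \avarstag^{\pi,(n)}_f},
\end{equation*}
where $F_k \defeq \dt\sum_{g\in\mathcal{G}(\omega_f)}\stagger\orientation_{f,g}|g|\volumefluxOneStag^{\pi,(n)}_{g,k}$ is the net signed volume of $\omega_k$-origin fluid crossing $\partial\omega_f$ outwards. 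The self-term $k=f$ drops because its multiplier vanishes, and the CFL constraint \cref{eqn:interface:onefluid:dr:cfl} with $\wycflval\le 1$ confines the donating regions to the stencil so that only $k\in\mathcal{F}(f)$ contribute.

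The heart of the argument, and the step I expect to be hardest, is pinning down the sign and size of the coefficients $F_k$ for the \emph{interpolated} staggered fluxes rather than for genuinely staggered donating regions. I would argue that the absence of flux overlap and transit errors makes the in-transit contribution of each neighbour cancel under the orientation-weighted sum, so that $F_k\le 0$ for every $k\neq f$ (pure inflow) while the self-flux is an outflow. The quantitative bound comes from applying \cref{thm:mass:bounded_outflow} to the staggered control volume $\omega_f$, viewed as a generic polytope whose partial fluxes are $\volumefluxOneStag^{\pi,(n)}_{g,k}$: this bounds the self-outflow by $F_f\le|\omega_f|\volfracstag^{\pi,(n)}_f$, and combined with the staggered mass balance $\sum_k F_k = |\omega_f|(\volfracstag^{\pi,(n)}_f-\volfracstag^{\pi,(n+1)}_f)$ it yields $\sum_{k\neq f}(-F_k)\le|\omega_f|\volfracstag^{\pi,(n+1)}_f$, i.e.\ the total inflow cannot exceed the phase volume present at $t^{(n+1)}$. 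The delicate point is verifying that the interpolated partial fluxes inherit the no-overlap/no-transit geometry of the underlying centred donating regions; here the operator connection \cref{lem:notation:connection} is what guarantees that the staggered mass balance used above is precisely the interpolant of the centred one.

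Finally, with $-F_k\ge 0$ for $k\neq f$ and $\sum_{k\neq f}(-F_k)\le|\omega_f|\volfracstag^{\pi,(n+1)}_f$, the increment is a sub-convex combination of the differences $\avarstag^{\pi,(n)}_k-\avarstag^{\pi,(n)}_f$, so the triangle inequality gives
\begin{equation*}
  \abs{\avarstag_f^{\pi,(n+1)} - \avarstag_f^{\pi,(n)}}
  \le \frac{\sum_{k\neq f}(-F_k)}{|\omega_f|\volfracstag^{\pi,(n+1)}_f}\,\max_{k\in\mathcal{F}(f)}\abs{\avarstag^{\pi,(n)}_k - \avarstag_f^{\pi,(n)}}
  \le \max_{k\in\mathcal{F}(f)}\abs{\avarstag^{\pi,(n)}_k - \avarstag_f^{\pi,(n)}},
\end{equation*}
which is the claimed estimate. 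The CFL assumption enters twice: it keeps the stencil within $\mathcal{F}(f)$ so the maximum on the right is over the correct neighbours, and it safeguards $\volfracstag^{\pi,(n+1)}_f>0$ so the division defining the increment is legitimate.
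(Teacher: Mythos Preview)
Your plan matches the paper's proof in structure: substitute the CTU flux so the denominator cancels, subtract the staggered mass balance to expose the increment as a sum over neighbours $k$ of coefficients times $(\avarstag^{\pi,(n)}_k - \avarstag^{\pi,(n)}_f)$, swap the order of summation, and then argue sign and size of the coefficients. The paper likewise splits the numerator into an inflow term $N^+$ and an outflow term $N^-$ and shows $N^- = 0$ identically, which is precisely your claim $F_k \le 0$ for $k \neq f$.

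The one place where your route diverges is the quantitative inflow bound. You propose to obtain $F_f \le |\omega_f|\volfracstag^{\pi,(n)}_f$ by applying \cref{thm:mass:bounded_outflow} directly to $\omega_f$ ``viewed as a generic polytope whose partial fluxes are $\volumefluxOneStag^{\pi,(n)}_{g,k}$'', and then recover the inflow bound from the staggered mass balance. This is a gap: \cref{thm:mass:bounded_outflow} is proved for centred control volumes with \emph{geometric} donating regions, whereas the interpolated partial fluxes are algebraic averages of centred partial fluxes and are not the intersection volumes of any DR on the faces of $\omega_f$. The operator connection~\cref{lem:notation:connection} only gives you the staggered mass balance as the interpolant of the centred one; it does not by itself transfer the per-neighbour sign or the outflow bound. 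The paper instead uses the identities behind the operator connection (\cref{eqn:app:connection:samesets,eqn:app:connection:centred_orientation}) to rewrite every interpolated partial flux as a half-sum of centred partial fluxes, then bounds the staggered inflow via $\stagger V^{+,\pi}_f \le (\interpolantsbp V^{+,\pi})_f$ using $[a_1+a_2]^+ \le [a_1]^+ + [a_2]^+$, and finally invokes \cref{thm:mass:bounded_outflow} on each centred $c \in \mathcal{C}(f)$. The same reduction to the centred level is what makes the sign argument $F_k \le 0$ (equivalently $N^- = 0$) rigorous. Once you carry out that reduction, your argument and the paper's coincide.
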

A proof is found in~\cref{sec:app:dr_analysis}.
Note that no such result can be shown without making use of the partial volume fluxes, since for the usual volume fluxes $\volumefluxOne_f^{\pi,(n)}$ no equivalent of~\cref{thm:mass:bounded_outflow} can be shown due to the presence of fluid which is merely in transit.
It is exactly this fluid that is in transit that cancels out due to swapping the order of summation in~\cref{eqn:centred:advection_reorderedsummation}, and this cancellation allows for~\cref{thm:mass:bounded_outflow}, and thus~\cref{lem:momentum:ctu_boundedness}, to be proven.


\subsubsection{The modified flux interpolant}\label{sec:centred:flux_interp_mod}
Using the CTU flux interpolant introduced previously alleviates any issues regarding boundedness, provided that no flux overlap and transit errors are committed, but it is a rather dissipative and only first-order accurate interpolant.
To this end we will now introduce the modified flux interpolant, which switches between the second-order accurate interpolant from~\cref{sec:momentum:unsplit_flux_interp} and the CTU flux interpolant from~\cref{sec:momentum:flux_interp_ctu} in a way that the latter is only used whenever it is needed to ensure boundedness.
The modified flux interpolant is defined as
\begin{equation}\label{eqn:centred:unsplit:modified}
  (\stagger\fluxinterp^{\mtext{Method}^*} \avarstag)^{\pi,(n)}_g \defeq \left\lbrace\begin{array}{rl}
    (\stagger\fluxinterp^\mtext{CTU} \avarstag)^{\pi,(n)}_g & \text{if } \min_{f \in \mathcal{F}^\omega(g)}\volfracstag^{\pi,(n+1)}_f < \minfracval\\
    (\stagger\fluxinterp^{\mtext{Method}} \avarstag)^{\pi,(n)}_g & \text{otherwise}
  \end{array}\right.,
\end{equation}
for some parameter $\minfracval > 0$.
If a staggered control volume $\omega_f$ results in $\volfracstag^{\pi,(n+1)}_f < \minfracval$, then each of the modified interpolant fluxes through the boundary faces $g \in \mathcal{G}(\omega_f)$ will reduce to the CTU flux, resulting in boundedness via~\cref{lem:momentum:ctu_boundedness}.
And if a staggered control volumes results in $\volfracstag^{\pi,(n+1)}_f \ge \minfracval$, then the following result shows that the modified interpolant still yields some form of boundedness.
\begin{restatable}[Boundedness of the modified flux interpolant]{lemma}{lemmamodboundedness}\label{lem:momentum:mod_boundedness}
  Assuming that $\volfracstag^{\pi,(n+1)}_f \ge \minfracval$, we find that the temporal update in $\avarstag^\pi$ is bounded in the following sense
  \begin{eqnarray*}
    \abs{\avarstag^{\pi,(n+1)}_f - \avarstag^{\pi,(n)}_f} \le \frac{\stagger W^{+,\pi}_f + \stagger W^{-,\pi}_f}{\minfracval} \max_{g \in \mathcal{G}(\omega_f)} \abs{(\fluxinterpstag^{\mtext{Method}^*} \avarstag^{\pi,(n)})_g - \avarstag^{\pi,(n)}_f},
  \end{eqnarray*}
  where the in- and outgoing volume, without re-ordering the summation over the partial volume fluxes, is given by
  \begin{eqnarray*}
    \stagger W^{\pm,\pi}_f \defeq \pm\frac{\dt}{|\omega_f|}\sum_{g\in\mathcal{G}(\omega_f)} \squarepar{-\stagger\orientation_{f,g} |g|{\volumefluxOneStag_g^{\pi,(n)}}}^\pm.
  \end{eqnarray*}
\end{restatable}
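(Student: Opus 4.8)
The plan is to derive an exact identity for the update $\avarstag^{\pi,(n+1)}_f - \avarstag^{\pi,(n)}_f$ in which the flux interpolant enters only through its deviation from the local value, $(\fluxinterpstag^{\mtext{Method}^*}\avarstag^{\pi,(n)})_g - \avarstag^{\pi,(n)}_f$, and then to bound this by the triangle inequality. First I would take the staggered momentum transport equation~\eqref{eqn:momentum:onefluid:transport} together with the staggered mass conservation equation (i.e.~\eqref{eqn:momentum:onefluid:transport} with $\avarstag = 1$, which is~\eqref{eqn:mass:onefluid:mass_cons_stag}), multiply the latter by the constant $\avarstag^{\pi,(n)}_f$, and subtract it from the former. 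Since $\fluxinterpstag^{\mtext{Method}^*} 1 = 1$ for each of our flux interpolants, the left-hand sides combine to $(\volfracstag\rho)^{\pi,(n+1)}_f\bigl(\avarstag^{\pi,(n+1)}_f - \avarstag^{\pi,(n)}_f\bigr)$ and the two advection terms merge into a single staggered divergence whose argument is weighted by $\avarstag^{\pi,(n)}_f - \fluxinterpstag^{\mtext{Method}^*}\avarstag^{\pi,(n)}$.

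Next I would expand this staggered divergence via its definition~\eqref{eqn:notation:notation:divergence_stag} and factor the density out using $\massfluxOneStag^\pi = \rho^\pi\volumefluxOneStag^\pi$ (see~\eqref{eqn:momentum:onefluid:staggered_massflux} and~\eqref{eqn:centred:volfluxstag}). Cancelling the common, positive factor $\rho^\pi$ then yields the exact identity
\begin{equation*}
  \volfracstag^{\pi,(n+1)}_f\bigl(\avarstag^{\pi,(n+1)}_f - \avarstag^{\pi,(n)}_f\bigr) = \frac{\dt}{|\omega_f|}\sum_{g\in\mathcal{G}(\omega_f)} \stagger\orientation_{f,g}\,|g|\,\volumefluxOneStag^{\pi,(n)}_g\,\bigl(\avarstag^{\pi,(n)}_f - (\fluxinterpstag^{\mtext{Method}^*}\avarstag^{\pi,(n)})_g\bigr).
\end{equation*}
In contrast to~\cref{lem:momentum:ctu_boundedness}, no assumption on the absence of flux overlap or transit errors is needed here: this step is purely algebraic, which is precisely why only the weaker, $\minfracval$-dependent bound can be obtained.

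To finish, I would take absolute values, apply the triangle inequality, and bound each factor $\abs{\avarstag^{\pi,(n)}_f - (\fluxinterpstag^{\mtext{Method}^*}\avarstag^{\pi,(n)})_g}$ by its maximum over $g\in\mathcal{G}(\omega_f)$, pulling that maximum outside the sum. The surviving coefficient $\tfrac{\dt}{|\omega_f|}\sum_{g}|g|\,\abs{\volumefluxOneStag^{\pi,(n)}_g}$ I would identify with $\stagger W^{+,\pi}_f + \stagger W^{-,\pi}_f$ through the elementary identity $\abs{x} = \posflux{x} - \negflux{x}$ applied to $x = -\stagger\orientation_{f,g}\,|g|\,\volumefluxOneStag^{\pi,(n)}_g$ (using $\abs{\stagger\orientation_{f,g}} = 1$). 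The hypothesis $\volfracstag^{\pi,(n+1)}_f \ge \minfracval$ then lets me replace the denominator $\volfracstag^{\pi,(n+1)}_f$ by the smaller $\minfracval$, giving the claimed bound.

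The one place demanding care is the second step: correctly merging the two advection terms into a single deviation-weighted staggered divergence—this hinges on $\fluxinterpstag^{\mtext{Method}^*} 1 = 1$—and then matching the sign convention of $\posflux{\cdot}$ and $\negflux{\cdot}$ so that the coefficient sum collapses \emph{exactly} to $\stagger W^{+,\pi}_f + \stagger W^{-,\pi}_f$ rather than to some looser majorant; the remaining manipulations are just the triangle inequality and the lower bound on the updated staggered volume fraction.
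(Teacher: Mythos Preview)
Your proposal is correct and follows essentially the same route as the paper: the paper derives the identical intermediate identity (by substituting the staggered mass update into~\eqref{eqn:centred:approx_dr_solve}, which is equivalent to your subtraction of the $\avarstag=1$ equation multiplied by $\avarstag^{\pi,(n)}_f$), then splits the sum into its positive and negative parts $M^{\pm,\pi}_f$ and bounds each separately by $\stagger W^{\pm,\pi}_f/\dt$ times the maximum deviation. Your variant of taking the absolute value first and then recognising $\sum_g |g|\,\lvert\volumefluxOneStag^{\pi,(n)}_g\rvert$ as $\tfrac{|\omega_f|}{\dt}(\stagger W^{+,\pi}_f + \stagger W^{-,\pi}_f)$ via $\abs{x} = \posflux{x} - \negflux{x}$ is the same argument with the split performed one line later.
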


A proof is found in~\cref{sec:app:dr_analysis}.
By not using the partial volume fluxes we cannot re-order the summation over the partial volume fluxes, and therefore fluid that is merely in transit does not cancel (as was the case with the previously discussed CTU flux interpolant). 
This prohibits the possibility of obtaining a bound for $\stagger W^{+,\pi}_f$ in terms of $\volfracstag^{\pi,(n+1)}_f$, and therefore we obtain an upper bound which is inversely proportional to the assumed lower bound of $\volfracstag^{\pi,(n+1)}_f$ (denoted by $\minfracval$).
Moreover, if each of the DRs do not self intersect then we can directly make use of~\cref{eqn:interface:onefluid:dr:cfl_bound}, resulting in ${\stagger W^{\pm,\pi}} \le \interpolantsbp\mathcfl$, such that the resulting bound from~\cref{lem:momentum:mod_boundedness} can be somewhat simplified
\begin{equation}\label{eqn:momentum:unsplit:modified_simplified}
  \abs{\avarstag^{\pi,(n+1)}_f - \avarstag^{\pi,(n)}_f} \le \frac{2 (\interpolantsbp\mathcfl)_f}{\minfracval}\max_{g \in \mathcal{G}(\omega_f)} \abs{(\fluxinterpstag^{\mtext{Method}^*} \avarstag^{\pi,(n)})_g - \avarstag^{\pi,(n)}_f}.
\end{equation}
In practise this bound holds with $\interpolantsbp\mathcfl$ replaced by $\wycflval$, since DRs are permitted to self intersect.
We will use $\minfracval = \half$ (unless indicated otherwise), hence the CTU flux interpolant is used if a staggered control volume becomes less than half full.

\subsection{The \onefluid formulation}
When the \onefluid formulation is considered we assume continuity of the velocity field.
However, the momentum transport method proposed here leads to two velocity fields to be defined at the interface after each time step (we let $\avarstag^{\pi} = \onevelo^{}$ in~\cref{eqn:momentum:onefluid:transport})
\begin{equation}
  u^{\pi,(n+1)} = \frac{(\volfracstag \rho)^{\pi,(n)} \onevelo^{(n)} - \dt \advectionstag{\massfluxOne^{\pi,(n)}}\onevelo^{(n)}}{(\volfracstag\rho)^{\pi,(n+1)}},
\end{equation}
which in general will not be equal $u^{l,(n+1)} \neq u^{g,(n+1)}$.
Here we follow~\citet{Arrufat2021} and choose to define a single continuous velocity as the sum (over the phases) of the momentums divided by the sum of the masses
\begin{equation}\label{eqn:momentum:onefluid}
  \onevelo^{(n+1)} = \frac{\mean{\volfracstag \rho u}^{(n+1)}}{\mean{\volfracstag \rho}^{(n+1)}},
\end{equation}
which results in
\begin{equation}
  \frac{(\mean{\volfracstag \rho}\onevelo)^{(n+1)} - (\mean{\volfracstag \rho} \onevelo)^{(n)}}{\dt} + \mean{\advectionstag{\massfluxOne}\onevelo}^{(n)} = 0,
\end{equation}
and thus yields exact conservation of the sum (over the phases) of the total linear momentum.

\subsection{The \twofluid formulation}
We will use the mass fluxes $\massfluxTwo^l, \massflux^g$, as defined in~\cref{sec:mass:twofluid}, for the transport of mass and momentum in the \twofluid formulation, for the liquid and gas phase respectively.
For the liquid phase this results in the advection method
\begin{equation}\label{eqn:momentum:liq_twofluid}
  \frac{(\volfracstag\rho u)^{l,(n+1)} - (\volfracstag\rho u)^{l,(n)}}{\dt} + \advection{\massfluxTwo^{l,(n)}} u^{l,(n)} = 0.
\end{equation}
The transport of the staggered mass follows from application of the interpolant $\interpolantsbp$ to the centred mass transport~\cref{eqn:mass:twofluid:mass_cons}, and can equivalently be obtained from~\cref{eqn:momentum:liq_twofluid} by letting $u^l \equiv 1$ by making use of~\cref{eqn:mass:onefluid:advection_property}.
It follows that liquid momentum is exactly conserved as the staggered mass need not be redefined.

On the other hand, transport of gas momentum is achieved using the mass fluxes $\massflux^g$, resulting in
\begin{equation}\label{eqn:momentum:gas_twofluid}
  \frac{\volfracstag^{g,*}(\rho u)^{g,(n+1)} - (\volfracstag\rho u)^{g,(n)}}{\dt} + \advection{\massflux^{g,(n)}} u^{g,(n)} = 0.
\end{equation}
Contrary to the transport of liquid mass and momentum, letting $u^g \equiv 1$ in~\cref{eqn:momentum:gas_twofluid} now does not yield the same staggered gas mass transport as one would obtain using the interpolant $\interpolantsbp$ applied to~\cref{eqn:mass:twofluid:gas_mass}.
Instead, as previously discussed in~\cref{sec:mass:twofluid}, letting $u^g \equiv 1$ in~\cref{eqn:momentum:gas_twofluid} yields the same staggered gas mass transport as one would obtain when applying the interpolant $\interpolantsbp$ to~\cref{eqn:mass:twofluid:mass_cons_gas}, which uses the same gas mass fluxes $\massflux^g$ as used in~\cref{eqn:momentum:gas_twofluid}.
For obtaining a bounded value of the gas velocity we must divide the gas momentum by the gas mass which results from the same gas mass fluxes $\massflux^g$, hence the one defined in~\cref{eqn:mass:twofluid:mass_cons_gas}.
Hence we will reset the gas mass according to~\cref{eqn:mass:twofluid:gas_mass} after every time step, resulting in a loss of momentum conservation in the gas phase.
We deem this to be acceptable because the gas density is much smaller than that of the liquid $\rho^g \ll \rho^l$, and therefore the loss or gain of momentum is comparatively small.

\subsection{Discussion}
\begin{table}
  \centering
  \begin{tabular}{lllllll}
    \toprule
    & \rotatebox{90}{Unsplit?} & \rotatebox{90}{Conservative?} & \rotatebox{90}{\# volume fluxes} & \rotatebox{90}{\# reconstructions} & \rotatebox{90}{\# velocities} & Flux interpolation\\ \midrule
    \citet{Rudman1998}            & N & $l+g$ & $d2^{d-1}$  & $d(2^d-1)$ & 1 & Flux limiting \\ 
    \citet{Zuzio2020}             & N & $l+g$ & $d2^{d-1}$  & $0$        & 1 & WENO5\\ 
    \citet{Arrufat2021}           & N &       & $d^2$       & $d^2$      & 1 & QUICK/Superbee \\ 
    \citet{Owkes2017}             & Y & $l+g$ & $d2^{d-1}$  & ?          & 1 & CTU near interface \\ 
    \textbf{Proposed \onefluid}   & Y & $l+g$ & $0$         & 0          & 1 & CTU {if needed} \\ \midrule
    \textbf{Proposed \twofluid}   & Y & $l$   & $d$         & 0          & 2 & CTU {if needed} \\ \bottomrule
  \end{tabular}
  \caption{Overview of considered methods for the transport of a staggered momentum field, where the bold-faced methods are proposed in this paper.
  Conservation of `$l + g$' means that the sum of liquid and gas momentum is conserved, whereas `$l$' means that the liquid momentum is conserved (and the gas momentum is not).
  We show the \emph{additional} number of volume fluxes and interface reconstructions that are needed per centred interface control volume for the transport of momentum (excluding those needed for the advection of the interface: $d$ for a split method and $1$ for an unsplit method).}
  \label{tab:momentum:overview}
\end{table}
We give an overview of momentum transport methods found in the literature, restricted to staggered formulations using a DR-based geometric VOF method, in~\cref{tab:momentum:overview}.
Our proposed advection method for both the one- and \twofluid formulation has been included.
All of the methods under consideration conserve the liquid as well as gas mass.
Note that for the split methods often no arguments regarding boundedness were made in the respective references, which is most likely because for a dimensionally split method, a simple upwind flux corresponds to what the CTU flux would give, and therefore sufficient limiting implies boundedness.
In~\citet{Arrufat2021} the authors mention the use of a very low CFL number ($0.03$) and they still report the `blow up' of the solution, which is likely to be related to not taking boundedness into account.

For the dimensionally unsplit method from~\citet{Owkes2017} we find that the CTU flux interpolant is used in a neighbourhood around the interface.
This ensures boundedness, but we find that using it everywhere near the interface results in an unnecessarily dissipative flux interpolant, and we would always suggest using the modified flux interpolant as proposed in~\cref{sec:centred:flux_interp_mod} to ensure that the CTU flux interpolant is used only as much as required for ensuring boundedness.

Our proposed use of interpolated mass (or equivalently, volume) fluxes results in (to the best of our knowledge) the only, dimensionally split or unsplit, staggered advection method which does not require any additional volume flux computation. 
Moreover the staggered advection method results in exact mass and momentum conservation (for the \onefluid formulation), sharply models the interface and ensures that quadratic invariants are preserved in the semi-discrete limit $\dt \rightarrow 0$ if the LW flux interpolant is used (see~\cref{sec:app:quadratic}).

  \section{Validation}\label{sec:validation}
We will now validate the proposed staggered advection methods for both the one- and \twofluid formulation.
Since we have thus far not discussed how we impose~\cref{eqn:intro:normal_smoothness}, we will use a single and therefore continuous advecting velocity field which will be denoted by $\Onevelo$.
This means that for now we will use $u^g = u^l = \onevelo$ as the advecting velocity field when the \twofluid formulation is considered.
In a future paper, where the discretisation of~\cref{eqn:intro:normal_smoothness} will be discussed, we will consider a multitude of test cases for which $\jump{u} \neq 0$.

The advected field, which we will again denote by $\avarstag^\pi \in \mathcal{F}^h$, will be discontinuous to mimic the presence of an unresolved shear layer
\begin{equation}\label{eqn:validation:advection:field} 
  \avarstag_x^l(t_0, \+x) = \prod_{i=1}^d \sin(4\pi x_i),\quad
  \avarstag_x^g(t_0, \+x) = \prod_{i=1}^d \cos(2\pi x_i),
\end{equation}
where the remaining $y, z$ components are zero.
The advection of this staggered field, for the \onefluid formulation, is performed as if it were momentum: after each time step the two fields are merged according to~\cref{eqn:momentum:onefluid}, where $\ratio{\rho} = 10^{-3}$. 

At all times we will use the modified flux interpolant as given by~\cref{eqn:centred:unsplit:modified}.

\begin{figure}
  \subcaptionbox{\Twofluid (LW).\label{fig:validation:advection:example:lw2}}
  [\threefigwidth]{
    \includegraphics[scale=0.24,trim=0 150 0 0,clip=true]{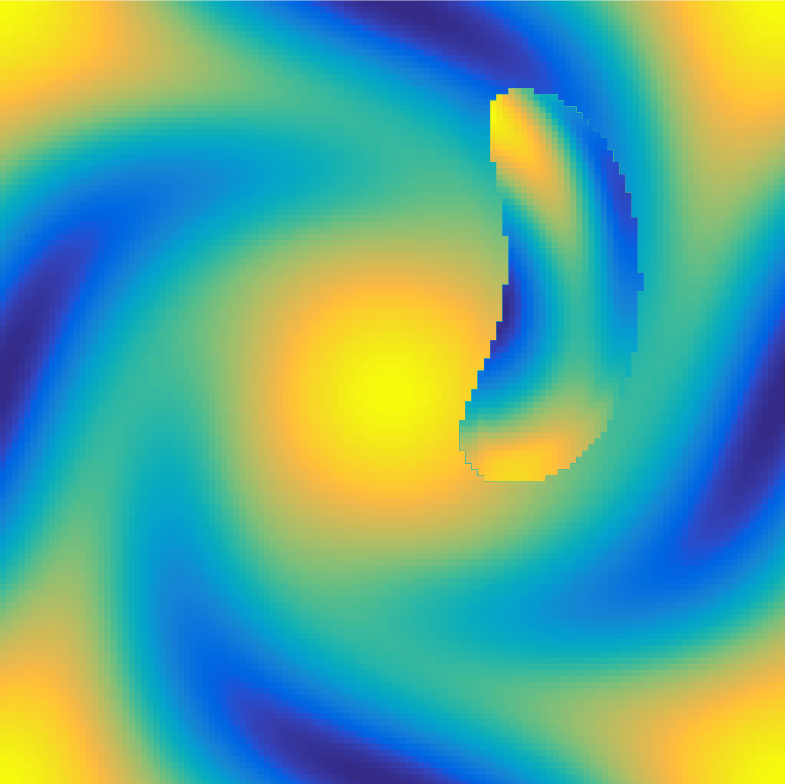}
  }\hfill
  \subcaptionbox{\Onefluid (LW).\label{fig:validation:advection:example:lw1}}
  [\threefigwidth]{
    \includegraphics[scale=0.24,trim=0 150 0 0,clip=true]{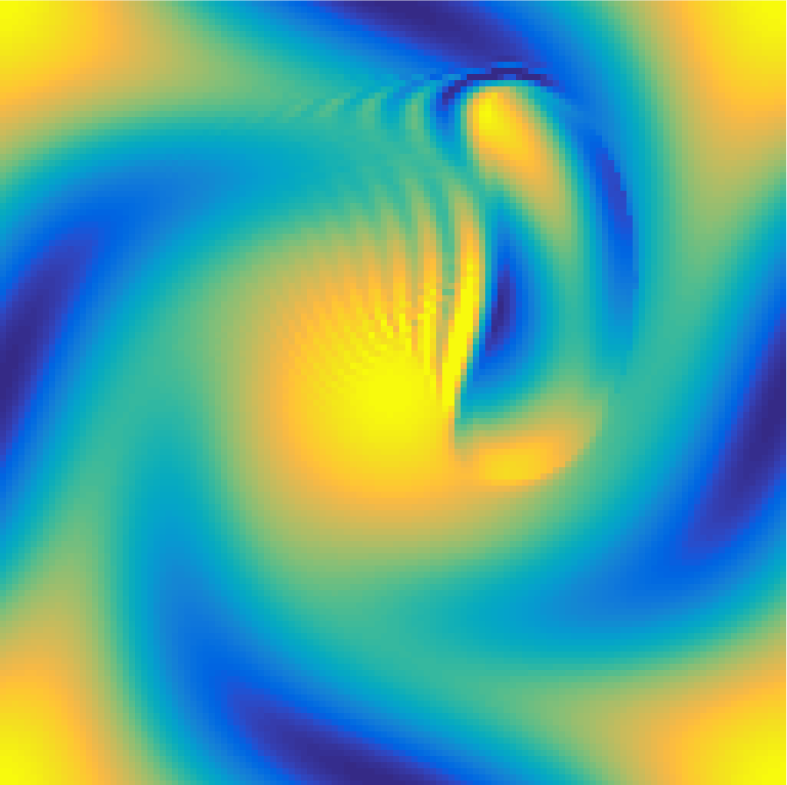}
  }\hfill
  \subcaptionbox{\Onefluid (Fromm).\label{fig:validation:advection:example:fromm}}
  [\threefigwidth]{
    \includegraphics[scale=0.24,trim=0 150 0 0,clip=true]{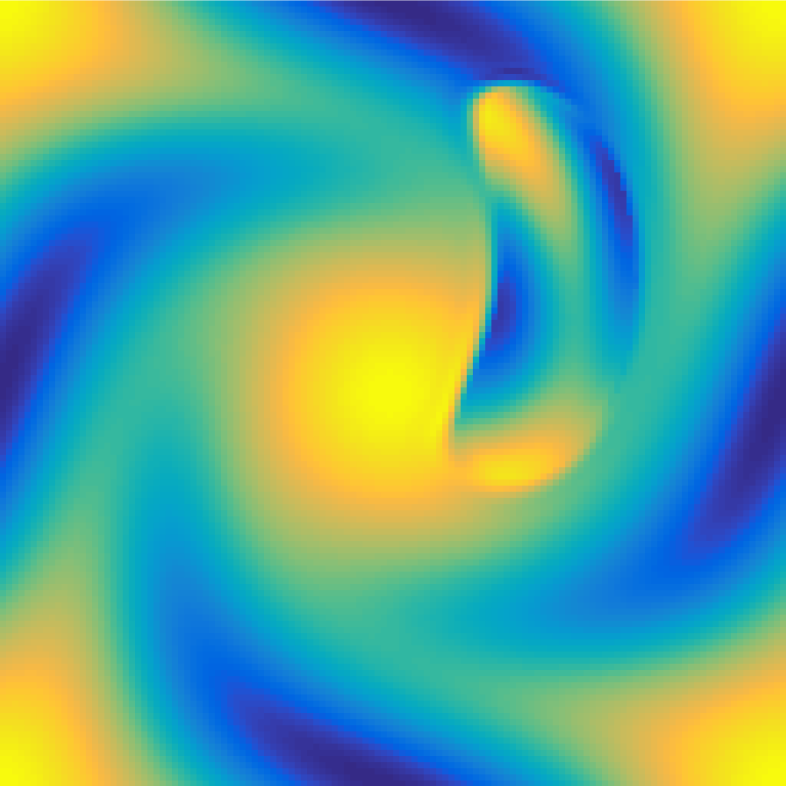}
  } 
  \caption{Example intermediate solutions to the 2D vortex reverse problem, where the field~\cref{eqn:validation:advection:field} is passively advected along with the flow.
  Here $h / R \approx 1/19$.}
  \label{fig:validation:advection:examples}
\end{figure}
In 2D we consider the classical vortex reverse problem~\citep{Bell1989,Rider1997} where the interface undergoes a reversible deformation defined by the stream function
\begin{equation}
  \bar\psi(t, \+x) = \frac{\cos(\pi t/T)}{\pi} \sin(\pi x)^2 \sin(\pi y)^2.
\end{equation}
The initial interface is a circle of radius $R = 0.15$ centred at $\+x_0 = \begin{bmatrix}0.5& 0.75\end{bmatrix}^T$.
We let $T = 1$ such that the interface remains easy to resolve.

The deformation field proposed by~\citet{Leveque1996a} is considered in 3D
\begin{equation}
  \Onevelo(t, \+x) = \cos(\pi t/T)\begin{bmatrix}
    2\sin(\pi x)^2\sin(2\pi y)\sin(2\pi z)\\
    -\sin(2\pi x)\sin(\pi y)^2\sin(2\pi z)\\
    -\sin(2\pi x)\sin(2\pi y)\sin(\pi z)^2
  \end{bmatrix},
\end{equation}
where a sphere of radius $R = 0.15$ is initially located at $\+x_0 = \begin{bmatrix}0.35& 0.35& 0.35\end{bmatrix}^T$.
The period is again taken as $T = 1$.

We will now explicitly denote an approximation to $\avarstag$ as $\approximate{\avarstag}$.

\subsection{Comparison of the one- and \twofluid formulation}
First we will compare the one- and \twofluid formulations in terms of the advection of the discontinuous field given by~\cref{eqn:validation:advection:field}.
In~\cref{fig:validation:advection:example:lw2,fig:validation:advection:example:lw1} we show the intermediate solutions of the 2D vortex reverse problem when using the LW flux interpolant with the two- and \onefluid formulation respectively.
The LW flux results in the appearance of wiggles in the lighter gas phase when the \onefluid formulation is used, the \twofluid formulation however yields satisfactory results.
Using the Fromm flux results in an absence of wiggles also in the \onefluid formulation, as shown in~\cref{fig:validation:advection:example:fromm}.
Of course the shear layer is still diffuse, contrary to the sharp shear layer found with the \twofluid formulation.

\begin{figure} 
  \subcaptionbox{Liquid phase.\label{fig:validation:advection:adveccomp:liq}}
  [\twofigwidth]{
    \def\tikzWidth{\textwidth*0.35}
    \def\tikzHeight{\textwidth*0.3}
    \tikzsetnextfilename{advection_paper_adveccomp_Fromm_l1ErrorL}
%
%
\definecolor{mycolor1}{rgb}{0.00000,0.44700,0.74100}%
\definecolor{mycolor2}{rgb}{0.85000,0.32500,0.09800}%
\begin{tikzpicture}

\begin{axis}[%
width=12.206in,
height=9.878in,
at={(2.047in,1.333in)},
scale only axis,
unbounded coords=jump,
xmode=log,
xmin=0.01,
xmax=0.416666666666667,
xminorticks=true,
xlabel style={font=\color{white!15!black}},
xlabel={$h / R$},
ymode=log,
ymin=0.000966011877192583,
ymax=1,
yminorticks=true,
ylabel style={font=\color{white!15!black}},
ylabel={$\|\approximate{\avarstag} - \avarstag\|_{L^1(\Omega^l)} / |\Omega^l|$},
axis background/.style={fill=white},
axis x line*=bottom,
axis y line*=left,
legend style={at={(0.03,0.97)}, anchor=north west, legend cell align=left, align=left, draw=white!15!black},
width=\tikzWidth,
height=\tikzHeight,
at={(0,0)}
]
\addplot [color=mycolor1, line width=1.0pt]
  table[]{/home/ronald/git_repos/phd_project/papers/2021_2fluid_pt1_transport/tikz/advection_paper_adveccomp_Fromm_l1ErrorL-1.tsv};
\addlegendentry{2-velocity}

\addplot [color=mycolor1, dashed, line width=1.0pt]
  table[]{/home/ronald/git_repos/phd_project/papers/2021_2fluid_pt1_transport/tikz/advection_paper_adveccomp_Fromm_l1ErrorL-2.tsv};
\addlegendentry{1-velocity}

\addplot [color=mycolor2, line width=1.0pt, forget plot]
  table[]{/home/ronald/git_repos/phd_project/papers/2021_2fluid_pt1_transport/tikz/advection_paper_adveccomp_Fromm_l1ErrorL-3.tsv};
\addplot [color=mycolor2, dashed, line width=1.0pt, forget plot]
  table[]{/home/ronald/git_repos/phd_project/papers/2021_2fluid_pt1_transport/tikz/advection_paper_adveccomp_Fromm_l1ErrorL-4.tsv};
\addplot [color=black, forget plot]
  table[]{/home/ronald/git_repos/phd_project/papers/2021_2fluid_pt1_transport/tikz/advection_paper_adveccomp_Fromm_l1ErrorL-5.tsv};
\node[below, align=center]
at (axis cs:1.042e-01,3.508e-03) {$\mathcal{O}(h^{2})$};
\end{axis}
\end{tikzpicture}%
  }\hfill
  \subcaptionbox{Gas phase.\label{fig:validation:advection:adveccomp:gas}}
  [\twofigwidth]{
    \def\tikzWidth{\textwidth*0.35}
    \def\tikzHeight{\textwidth*0.3}
    \tikzsetnextfilename{advection_paper_adveccomp_Fromm_l1ErrorG}
%
%
\definecolor{mycolor1}{rgb}{0.00000,0.44700,0.74100}%
\definecolor{mycolor2}{rgb}{0.85000,0.32500,0.09800}%
\begin{tikzpicture}

\begin{axis}[%
width=12.206in,
height=9.878in,
at={(2.047in,1.333in)},
scale only axis,
unbounded coords=jump,
xmode=log,
xmin=0.01,
xmax=0.416666666666667,
xminorticks=true,
xlabel style={font=\color{white!15!black}},
xlabel={$h / R$},
ymode=log,
ymin=1e-05,
ymax=0.1,
yminorticks=true,
ylabel style={font=\color{white!15!black}},
ylabel={$\|\approximate{\avarstag} - \avarstag\|_{L^1(\Omega^g)} / |\Omega^g|$},
axis background/.style={fill=white},
axis x line*=bottom,
axis y line*=left,
width=\tikzWidth,
height=\tikzHeight,
at={(0,0)}
]
\addplot [color=mycolor1, line width=1.0pt, forget plot]
  table[]{/home/ronald/git_repos/phd_project/papers/2021_2fluid_pt1_transport/tikz/advection_paper_adveccomp_Fromm_l1ErrorG-1.tsv};
\addplot [color=mycolor1, dashed, line width=1.0pt, forget plot]
  table[]{/home/ronald/git_repos/phd_project/papers/2021_2fluid_pt1_transport/tikz/advection_paper_adveccomp_Fromm_l1ErrorG-2.tsv};
\addplot [color=mycolor2, line width=1.0pt, forget plot]
  table[]{/home/ronald/git_repos/phd_project/papers/2021_2fluid_pt1_transport/tikz/advection_paper_adveccomp_Fromm_l1ErrorG-3.tsv};
\addplot [color=mycolor2, dashed, line width=1.0pt, forget plot]
  table[]{/home/ronald/git_repos/phd_project/papers/2021_2fluid_pt1_transport/tikz/advection_paper_adveccomp_Fromm_l1ErrorG-4.tsv};
\addplot [color=black, forget plot]
  table[]{/home/ronald/git_repos/phd_project/papers/2021_2fluid_pt1_transport/tikz/advection_paper_adveccomp_Fromm_l1ErrorG-5.tsv};
\node[above, align=center]
at (axis cs:2.604e-02,3.354e-02) {$\mathcal{O}(h)$};
\addplot [color=black, forget plot]
  table[]{/home/ronald/git_repos/phd_project/papers/2021_2fluid_pt1_transport/tikz/advection_paper_adveccomp_Fromm_l1ErrorG-6.tsv};
\node[below, align=center]
at (axis cs:1.042e-01,1.754e-04) {$\mathcal{O}(h^{2})$};
\end{axis}
\end{tikzpicture}%
  } 
  \caption{Comparison of the one- and \twofluid formulations in terms of the $L^1$-norm of the error at $t = T$ for the 2D vortex reverse problem (blue) and 3D deformation problem (red).
  The Fromm flux was used.}
  \label{fig:validation:advection:adveccomp} 
\end{figure}
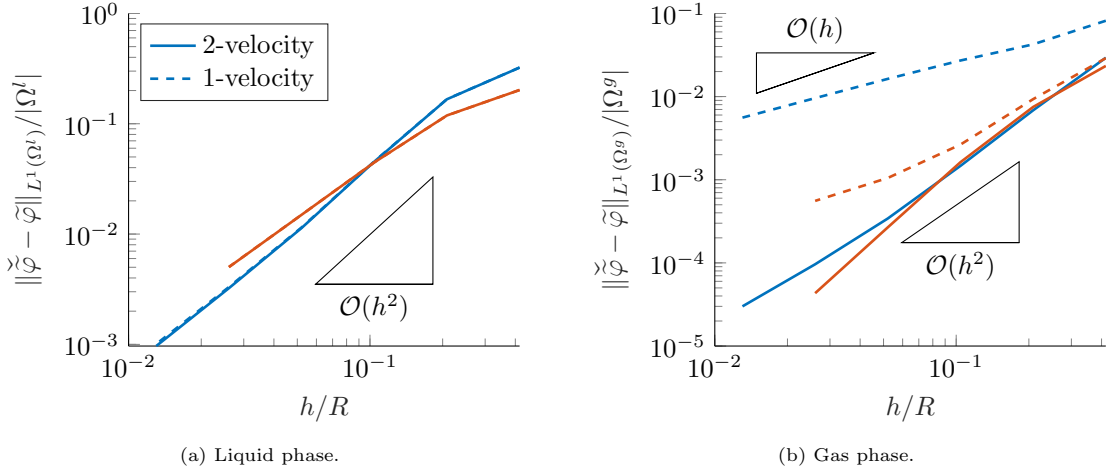
These observations are reflected in~\cref{fig:validation:advection:adveccomp} where we show the accuracy of the advection method in terms of the $L^1$-norm of the error at $t = T$, for each of the phases separately.
We combine the results from the 2D vortex reverse as well as 3D deformation problem.
Because the liquid phase is heavier, it is favoured by~\cref{eqn:momentum:onefluid}, and therefore the accuracy in the liquid phase is the same for the two formulations, which is close to second-order.
In the gas phase we however find that the \onefluid formulation results in less than first-order accuracy, whereas for the \twofluid formulation the accuracy is again of second-order.

\subsection{Comparison of flux interpolants (2D)}
\begin{figure}
  \centering
  \tikzsetnextfilename{advec_fluxcomp_legend}
  \definecolor{mycolor1}{rgb}{0.00000,0.44700,0.74100}%
\definecolor{mycolor2}{rgb}{0.85000,0.32500,0.09800}%
\definecolor{mycolor3}{rgb}{0.46600,0.67400,0.18800}%
\definecolor{mycolor4}{rgb}{0.49400,0.18400,0.55600}%
\begin{tikzpicture}

\begin{axis}[%
  hide axis,
  width=2cm,
  height=2cm,
  ymin=0.990,
  ymax=1,
  legend style={at={(0.5,0.5)}, anchor=south, legend columns=4, legend cell align=left, align=left, draw=white!15!black},
  ]

  \addplot [line width=1.0pt, color=mycolor1, domain=-0.1:0.1,samples=2, draw=none] {1};
  \addlegendentry{\gls{lw}}
  \addplot [line width=1.0pt, color=mycolor2, domain=-0.1:0.1,samples=2, draw=none] {1};
  \addlegendentry{Fromm}
  \addplot [line width=1.0pt, color=mycolor3, domain=-0.1:0.1,samples=2, draw=none] {1};
  \addlegendentry{\gls{mc}}
  \addplot [line width=1.0pt, color=mycolor4, domain=-0.1:0.1,samples=2, draw=none] {1};
  \addlegendentry{Upwind}

\end{axis}
\end{tikzpicture}%

  \subcaptionbox{No interface ($\Omega^l = \Omega$).\label{fig:validation:advection:fluxcomp_noint}}
  [\twofigwidth]{
    \def\tikzWidth{\textwidth*0.33}
    \def\tikzHeight{\textwidth*0.3}
    \tikzsetnextfilename{advection_paper_nointerface_fluxcomp_l1ErrorL}
%
%
\definecolor{mycolor1}{rgb}{0.00000,0.44700,0.74100}%
\definecolor{mycolor2}{rgb}{0.85000,0.32500,0.09800}%
\definecolor{mycolor3}{rgb}{0.46600,0.67400,0.18800}%
\definecolor{mycolor4}{rgb}{0.49400,0.18400,0.55600}%
\begin{tikzpicture}

\begin{axis}[%
width=12.206in,
height=9.878in,
at={(2.047in,1.333in)},
scale only axis,
xmode=log,
xmin=0.01,
xmax=0.416666666666667,
xminorticks=true,
xlabel style={font=\color{white!15!black}},
xlabel={$h / R$},
ymode=log,
ymin=1e-06,
ymax=1,
yminorticks=true,
ylabel style={font=\color{white!15!black}},
ylabel={$\|\approximate{\avarstag} - \avarstag\|_{L^1(\Omega)} / |\Omega|$},
axis background/.style={fill=white},
axis x line*=bottom,
axis y line*=left,
width=\tikzWidth,
height=\tikzHeight,
at={(0,0)}
]
\addplot [color=mycolor1, line width=1.0pt, forget plot]
  table[]{/home/ronald/git_repos/phd_project/papers/2021_2fluid_pt1_transport/tikz/advection_paper_nointerface_fluxcomp_l1ErrorL-1.tsv};
\addplot [color=mycolor2, line width=1.0pt, forget plot]
  table[]{/home/ronald/git_repos/phd_project/papers/2021_2fluid_pt1_transport/tikz/advection_paper_nointerface_fluxcomp_l1ErrorL-2.tsv};
\addplot [color=mycolor3, line width=1.0pt, forget plot]
  table[]{/home/ronald/git_repos/phd_project/papers/2021_2fluid_pt1_transport/tikz/advection_paper_nointerface_fluxcomp_l1ErrorL-3.tsv};
\addplot [color=mycolor4, line width=1.0pt, forget plot]
  table[]{/home/ronald/git_repos/phd_project/papers/2021_2fluid_pt1_transport/tikz/advection_paper_nointerface_fluxcomp_l1ErrorL-4.tsv};
\addplot [color=black, forget plot]
  table[]{/home/ronald/git_repos/phd_project/papers/2021_2fluid_pt1_transport/tikz/advection_paper_nointerface_fluxcomp_l1ErrorL-5.tsv};
\node[below, align=center]
at (axis cs:2.604e-02,1.480e-02) {$\mathcal{O}(h)$};
\addplot [color=black, forget plot]
  table[]{/home/ronald/git_repos/phd_project/papers/2021_2fluid_pt1_transport/tikz/advection_paper_nointerface_fluxcomp_l1ErrorL-6.tsv};
\node[below, align=center]
at (axis cs:1.042e-01,1.126e-04) {$\mathcal{O}(h^{3})$};
\end{axis}
\end{tikzpicture}%
  }\hfill
  \subcaptionbox{With interface (\twofluid model).\label{fig:validation:advection:fluxcomp}}
  [\twofigwidth]{
    \def\tikzWidth{\textwidth*0.33}
    \def\tikzHeight{\textwidth*0.3}
    \tikzsetnextfilename{advection_paper_fluxcomp_l1ErrorL}
%
%
\definecolor{mycolor1}{rgb}{0.00000,0.44700,0.74100}%
\definecolor{mycolor2}{rgb}{0.85000,0.32500,0.09800}%
\definecolor{mycolor3}{rgb}{0.46600,0.67400,0.18800}%
\definecolor{mycolor4}{rgb}{0.49400,0.18400,0.55600}%
\begin{tikzpicture}

\begin{axis}[%
width=12.206in,
height=9.878in,
at={(2.047in,1.333in)},
scale only axis,
xmode=log,
xmin=0.01,
xmax=0.416666666666667,
xminorticks=true,
xlabel style={font=\color{white!15!black}},
xlabel={$h / R$},
ymode=log,
ymin=0.000973917013588004,
ymax=1,
yminorticks=true,
ylabel style={font=\color{white!15!black}},
ylabel={$\|\approximate{\avarstag} - \avarstag\|_{L^1(\Omega^l)} / |\Omega^l|$},
axis background/.style={fill=white},
axis x line*=bottom,
axis y line*=left,
width=\tikzWidth,
height=\tikzHeight,
at={(0,0)}
]
\addplot [color=mycolor1, line width=1.0pt, forget plot]
  table[]{/home/ronald/git_repos/phd_project/papers/2021_2fluid_pt1_transport/tikz/advection_paper_fluxcomp_l1ErrorL-1.tsv};
\addplot [color=mycolor2, line width=1.0pt, forget plot]
  table[]{/home/ronald/git_repos/phd_project/papers/2021_2fluid_pt1_transport/tikz/advection_paper_fluxcomp_l1ErrorL-2.tsv};
\addplot [color=mycolor3, line width=1.0pt, forget plot]
  table[]{/home/ronald/git_repos/phd_project/papers/2021_2fluid_pt1_transport/tikz/advection_paper_fluxcomp_l1ErrorL-3.tsv};
\addplot [color=mycolor4, line width=1.0pt, forget plot]
  table[]{/home/ronald/git_repos/phd_project/papers/2021_2fluid_pt1_transport/tikz/advection_paper_fluxcomp_l1ErrorL-4.tsv};
\addplot [color=black, forget plot]
  table[]{/home/ronald/git_repos/phd_project/papers/2021_2fluid_pt1_transport/tikz/advection_paper_fluxcomp_l1ErrorL-5.tsv};
\node[below, align=center]
at (axis cs:2.604e-02,2.673e-02) {$\mathcal{O}(h)$};
\addplot [color=black, forget plot]
  table[]{/home/ronald/git_repos/phd_project/papers/2021_2fluid_pt1_transport/tikz/advection_paper_fluxcomp_l1ErrorL-6.tsv};
\node[below, align=center]
at (axis cs:1.042e-01,7.506e-03) {$\mathcal{O}(h^{2})$};
\end{axis}
\end{tikzpicture}%
  }\hfill
  \caption{Comparison of the flux interpolants in terms of the $L^1$-norm of the error at $t = T$ for the 2D vortex reverse problem.}

  \vspace*{\floatsep}

  \subcaptionbox{No interface ($\Omega^l = \Omega$).\label{fig:validation:advection:fluxcomp_noint_energy}}
  [\twofigwidth]{
    \def\tikzWidth{\textwidth*0.33}
    \def\tikzHeight{\textwidth*0.3}
    \tikzsetnextfilename{advection_paper_nointerface_fluxcomp_energyL}
%
%
\definecolor{mycolor1}{rgb}{0.00000,0.44700,0.74100}%
\definecolor{mycolor2}{rgb}{0.85000,0.32500,0.09800}%
\definecolor{mycolor3}{rgb}{0.46600,0.67400,0.18800}%
\definecolor{mycolor4}{rgb}{0.49400,0.18400,0.55600}%
\begin{tikzpicture}

\begin{axis}[%
width=12.206in,
height=9.878in,
at={(2.047in,1.333in)},
scale only axis,
xmode=log,
xmin=0.01,
xmax=0.416666666666667,
xminorticks=true,
xlabel style={font=\color{white!15!black}},
xlabel={$h / R$},
ymode=log,
ymin=1e-06,
ymax=1,
yminorticks=true,
ylabel style={font=\color{white!15!black}},
ylabel={$\abs{\Delta E_k} / E_k(0)$},
axis background/.style={fill=white},
axis x line*=bottom,
axis y line*=left,
width=\tikzWidth,
height=\tikzHeight,
at={(0,0)}
]
\addplot [color=mycolor1, line width=1.0pt, forget plot]
  table[]{/home/ronald/git_repos/phd_project/papers/2021_2fluid_pt1_transport/tikz/advection_paper_nointerface_fluxcomp_energyL-1.tsv};
\addplot [color=mycolor2, line width=1.0pt, forget plot]
  table[]{/home/ronald/git_repos/phd_project/papers/2021_2fluid_pt1_transport/tikz/advection_paper_nointerface_fluxcomp_energyL-2.tsv};
\addplot [color=mycolor3, line width=1.0pt, forget plot]
  table[]{/home/ronald/git_repos/phd_project/papers/2021_2fluid_pt1_transport/tikz/advection_paper_nointerface_fluxcomp_energyL-3.tsv};
\addplot [color=mycolor4, line width=1.0pt, forget plot]
  table[]{/home/ronald/git_repos/phd_project/papers/2021_2fluid_pt1_transport/tikz/advection_paper_nointerface_fluxcomp_energyL-4.tsv};
\addplot [color=black, forget plot]
  table[]{/home/ronald/git_repos/phd_project/papers/2021_2fluid_pt1_transport/tikz/advection_paper_nointerface_fluxcomp_energyL-5.tsv};
\node[below, align=center]
at (axis cs:2.604e-02,3.058e-02) {$\mathcal{O}(h)$};
\addplot [color=black, forget plot]
  table[]{/home/ronald/git_repos/phd_project/papers/2021_2fluid_pt1_transport/tikz/advection_paper_nointerface_fluxcomp_energyL-6.tsv};
\node[below, align=center]
at (axis cs:1.042e-01,1.500e-04) {$\mathcal{O}(h^{3})$};
\end{axis}
\end{tikzpicture}%
  }\hfill
  \subcaptionbox{With interface (\twofluid model).\label{fig:validation:advection:fluxcomp_energy}}
  [\twofigwidth]{
    \def\tikzWidth{\textwidth*0.33}
    \def\tikzHeight{\textwidth*0.3}
    \tikzsetnextfilename{advection_paper_fluxcomp_energyL}
%
%
\definecolor{mycolor1}{rgb}{0.00000,0.44700,0.74100}%
\definecolor{mycolor2}{rgb}{0.85000,0.32500,0.09800}%
\definecolor{mycolor3}{rgb}{0.46600,0.67400,0.18800}%
\definecolor{mycolor4}{rgb}{0.49400,0.18400,0.55600}%
\begin{tikzpicture}

\begin{axis}[%
width=12.206in,
height=9.878in,
at={(2.047in,1.333in)},
scale only axis,
xmode=log,
xmin=0.01,
xmax=0.416666666666667,
xminorticks=true,
xlabel style={font=\color{white!15!black}},
xlabel={$h / R$},
ymode=log,
ymin=0.0001,
ymax=0.123612714567334,
yminorticks=true,
ylabel style={font=\color{white!15!black}},
ylabel={$\abs{\Delta E_k^l} / E^l_k(0)$},
axis background/.style={fill=white},
axis x line*=bottom,
axis y line*=left,
width=\tikzWidth,
height=\tikzHeight,
at={(0,0)}
]
\addplot [color=mycolor1, line width=1.0pt, forget plot]
  table[]{/home/ronald/git_repos/phd_project/papers/2021_2fluid_pt1_transport/tikz/advection_paper_fluxcomp_energyL-1.tsv};
\addplot [color=mycolor2, line width=1.0pt, forget plot]
  table[]{/home/ronald/git_repos/phd_project/papers/2021_2fluid_pt1_transport/tikz/advection_paper_fluxcomp_energyL-2.tsv};
\addplot [color=mycolor3, line width=1.0pt, forget plot]
  table[]{/home/ronald/git_repos/phd_project/papers/2021_2fluid_pt1_transport/tikz/advection_paper_fluxcomp_energyL-3.tsv};
\addplot [color=mycolor4, line width=1.0pt, forget plot]
  table[]{/home/ronald/git_repos/phd_project/papers/2021_2fluid_pt1_transport/tikz/advection_paper_fluxcomp_energyL-4.tsv};
\addplot [color=black, forget plot]
  table[]{/home/ronald/git_repos/phd_project/papers/2021_2fluid_pt1_transport/tikz/advection_paper_fluxcomp_energyL-5.tsv};
\node[below, align=center]
at (axis cs:2.604e-02,1.368e-02) {$\mathcal{O}(h)$};
\addplot [color=black, forget plot]
  table[]{/home/ronald/git_repos/phd_project/papers/2021_2fluid_pt1_transport/tikz/advection_paper_fluxcomp_energyL-6.tsv};
\node[below, align=center]
at (axis cs:1.042e-01,2.833e-03) {$\mathcal{O}(h^{2})$};
\end{axis}
\end{tikzpicture}%
  }\hfill
  \caption{Comparison of the flux interpolants in terms of the loss of kinetic energy at $t = T$ for the 2D vortex reverse problem.}
\end{figure}

We now consider the effect of the flux interpolant on the accuracy of the advection method, in 2D.
To separate the effect of the \gls{ctu} flux interpolant from the higher-order flux interpolants, we also consider a simulation without an interface ($\Omega^l = \Omega$).
The accuracy in terms of the $L^1$-norm, in the absence of an interface, is shown in~\cref{fig:validation:advection:fluxcomp_noint}.
We find that the higher-order flux interpolants (excluding the upwind interpolant) are third-order accurate in space and time (the \gls{cfl} constraint is fixed at $\wycflval = \half$, see also~\cref{eqn:interface:onefluid:dr:cfl}).
In~\cref{fig:validation:advection:fluxcomp} we show the accuracy in the presence of an interface, where the \twofluid formulation is used.
We only show the accuracy of the liquid phase, as the accuracy in the gas phase was found to be similar.
The results show that the flux interpolants, except for the upwind interpolant, are close to second-order accurate in the $L^1$-norm and of very similar accuracy.
Moreover we find that the error is two orders of magnitude larger than what we found in the absence of an interface, which suggests that the first-order error at the interface, due to the \gls{ctu} flux interpolant, dominates the overall accuracy.

We furthermore consider how well quadratic invariants are conserved (see~\cref{sec:app:quadratic}), where we define the `kinetic energy' of the passively advected scalar as
\begin{equation}
  E^\pi_k \defeq \half\sum_\setextrusion{F} \volfracstag^\pi \rho^\pi (\avarstag^\pi)^2.
\end{equation}
The change in kinetic energy is defined as
\begin{equation}
  \Delta E^\pi_k(t) \defeq E^\pi_k(t) - E^\pi_k(0).
\end{equation}
The resulting change in kinetic energy at $t = T$, relative to the initial kinetic energy, is shown in~\cref{fig:validation:advection:fluxcomp_noint_energy,fig:validation:advection:fluxcomp_energy} for the simulation without and with the interface respectively.
In the absence of an interface, we find that the higher-order methods are third-order accurate, where the \gls{lw} flux interpolant yields the smallest change in kinetic energy, as can be expected from the discussion in~\cref{sec:app:quadratic}.
When an interface is included, however, we find that significantly more energy is lost, which is attributed to the use of the first-order accurate \gls{ctu} flux interpolant at the interface.

\subsection{Effect of the parameters $\wycflval$ and $\minfracval$ (2D)}
Finally we consider the effect of the \gls{cfl} limit $\wycflval$ (see also~\cref{eqn:interface:onefluid:dr:cfl}) and the parameter $\minfracval$ (as required in~\cref{eqn:centred:unsplit:modified}, see also~\cref{lem:momentum:mod_boundedness,eqn:momentum:unsplit:modified_simplified}) on the accuracy as well as conservation of kinetic energy.
Recall that the parameter $\minfracval$ determines when we use the \gls{ctu} flux interpolant: if $\volfracstag_f^{\pi,(n+1)} < \minfracval$ then the \gls{ctu} flux interpolant is used on all faces of $\omega_f$.
As the parameter $\minfracval$ only has an influence at the interface, and results in the use of the first-order accurate \gls{ctu} flux interpolant, we now consider the $L^\infty$-norm of the error at $t = T$ such that we essentially obtain the accuracy at the interface.

\begin{figure} 
  \centering
  \tikzexternaldisable
  \definecolor{mycolor1}{rgb}{0.00000,0.44700,0.74100}%
\definecolor{mycolor2}{rgb}{0.85000,0.32500,0.09800}%
\definecolor{mycolor3}{rgb}{0.92900,0.69400,0.12500}%

\begin{tikzpicture}
    \begin{axis}[
        hide axis,
        width=2.cm,
        height=2cm,
        ymin=0.99,
        ymax=1,
    ]
        \addplot [dotted, line width=1.0pt, color=mycolor2, domain=-0.1:0.1,samples=2, draw=none] {1};
            \label{plot:advect_paper:cflcomp:line1}
        \addplot [dashed, line width=1.0pt, color=mycolor2, domain=-0.1:0.1,samples=2, draw=none] {1};
            \label{plot:advect_paper:cflcomp:line2}
        \addplot [line width=1.0pt, color=mycolor2, domain=-0.1:0.1,samples=2, draw=none] {1};
            \label{plot:advect_paper:cflcomp:line3}
        \addplot [dotted, line width=1.0pt, color=mycolor1, domain=-0.1:0.1,samples=2, draw=none] {1};
            \label{plot:advect_paper:cflcomp:line4}
        \addplot [dashed, line width=1.0pt, color=mycolor1, domain=-0.1:0.1,samples=2, draw=none] {1};
            \label{plot:advect_paper:cflcomp:line5}
        \addplot [line width=1.0pt, color=mycolor1, domain=-0.1:0.1,samples=2, draw=none] {1};
            \label{plot:advect_paper:cflcomp:line6}

        %
        \coordinate (legend) at (axis description cs:0.5,0.5);
    \end{axis}

    %
    \draw (0,0) node{
      \begin{tabular}{|clccc|}
        \hline
         \multicolumn{2}{|r}{\small $\wycflval$:}                   & $0.1$                             & $0.5$ & $1.0$\\
        \hline
        \multirow{2}{*}{{\small $\minfracval:$}}& $0.1$  & \ref{plot:advect_paper:cflcomp:line4}   & \ref{plot:advect_paper:cflcomp:line5} & \ref{plot:advect_paper:cflcomp:line6}\\[0.15cm]
        & $0.5$  & \ref{plot:advect_paper:cflcomp:line1}   & \ref{plot:advect_paper:cflcomp:line2} & \ref{plot:advect_paper:cflcomp:line3}\\
        \hline
    \end{tabular}
    };

\end{tikzpicture}
  \tikzexternalenable

  \subcaptionbox{The $L^\infty$-norm of the error in the gas phase.\label{fig:validation:advection:cflcomp_linf}}
  [\twofigwidth]{
    \def\tikzWidth{\textwidth*0.35}  
    \def\tikzHeight{\textwidth*0.3}
    \tikzsetnextfilename{advection_paper_cflcomp_lInfErrorG}
%
%
\definecolor{mycolor1}{rgb}{0.85000,0.32500,0.09800}%
\definecolor{mycolor2}{rgb}{0.00000,0.44700,0.74100}%
\begin{tikzpicture}

\begin{axis}[%
width=12.206in,
height=9.878in,
at={(2.047in,1.333in)},
scale only axis,
xmode=log,
xmin=0.01,
xmax=0.416666666666667,
xminorticks=true,
xlabel style={font=\color{white!15!black}},
xlabel={$h / R$},
ymode=log,
ymin=0.01,
ymax=0.261593565636194,
yminorticks=true,
ylabel style={font=\color{white!15!black}},
ylabel={$\|\approximate{\avarstag} - \avarstag\|_{L^\infty(\Omega^g)}$},
axis background/.style={fill=white},
axis x line*=bottom,
axis y line*=left,
width=\tikzWidth,
height=\tikzHeight,
at={(0,0)}
]
\addplot [color=mycolor1, line width=1.0pt, forget plot]
  table[]{/home/ronald/git_repos/phd_project/papers/2021_2fluid_pt1_transport/tikz/advection_paper_cflcomp_lInfErrorG-1.tsv};
\addplot [color=mycolor1, dashed, line width=1.0pt, forget plot]
  table[]{/home/ronald/git_repos/phd_project/papers/2021_2fluid_pt1_transport/tikz/advection_paper_cflcomp_lInfErrorG-2.tsv};
\addplot [color=mycolor1, dotted, line width=1.0pt, forget plot]
  table[]{/home/ronald/git_repos/phd_project/papers/2021_2fluid_pt1_transport/tikz/advection_paper_cflcomp_lInfErrorG-3.tsv};
\addplot [color=mycolor2, dashed, line width=1.0pt, forget plot]
  table[]{/home/ronald/git_repos/phd_project/papers/2021_2fluid_pt1_transport/tikz/advection_paper_cflcomp_lInfErrorG-4.tsv};
\addplot [color=mycolor2, dotted, line width=1.0pt, forget plot]
  table[]{/home/ronald/git_repos/phd_project/papers/2021_2fluid_pt1_transport/tikz/advection_paper_cflcomp_lInfErrorG-5.tsv};
\addplot [color=mycolor2, line width=1.0pt, forget plot]
  table[]{/home/ronald/git_repos/phd_project/papers/2021_2fluid_pt1_transport/tikz/advection_paper_cflcomp_lInfErrorG-6.tsv};
\addplot [color=white!50!mycolor2, dashed, line width=1.0pt, mark size=2.5pt, mark=+, mark options={solid, white!50!mycolor2}, forget plot]
  table[]{/home/ronald/git_repos/phd_project/papers/2021_2fluid_pt1_transport/tikz/advection_paper_cflcomp_lInfErrorG-7.tsv};
\addplot [color=white!50!mycolor2, dotted, line width=1.0pt, mark size=2.5pt, mark=+, mark options={solid, white!50!mycolor2}, forget plot]
  table[]{/home/ronald/git_repos/phd_project/papers/2021_2fluid_pt1_transport/tikz/advection_paper_cflcomp_lInfErrorG-8.tsv};
\addplot [color=black, forget plot]
  table[]{/home/ronald/git_repos/phd_project/papers/2021_2fluid_pt1_transport/tikz/advection_paper_cflcomp_lInfErrorG-9.tsv};
\node[below, align=center]
at (axis cs:1.042e-01,2.101e-02) {$\mathcal{O}(h)$};
\end{axis}
\end{tikzpicture}%
  }\hfill
  \subcaptionbox{The loss of kinetic energy in the gas phase.\label{fig:validation:advection:cflcomp_energy}}
  [\twofigwidth]{
    \def\tikzWidth{\textwidth*0.35} 
    \def\tikzHeight{\textwidth*0.3} 
    \tikzsetnextfilename{advection_paper_cflcomp_energyG}
%
%
\definecolor{mycolor1}{rgb}{0.85000,0.32500,0.09800}%
\definecolor{mycolor2}{rgb}{0.00000,0.44700,0.74100}%
\begin{tikzpicture}

\begin{axis}[%
width=12.206in,
height=9.878in,
at={(2.047in,1.333in)},
scale only axis,
xmode=log,
xmin=0.01,
xmax=0.416666666666667,
xminorticks=true,
xlabel style={font=\color{white!15!black}},
xlabel={$h / R$},
ymode=log,
ymin=1e-06,
ymax=0.1,
yminorticks=true,
ylabel style={font=\color{white!15!black}},
ylabel={$\abs{\Delta E_k^g} / E^g_k(0)$},
axis background/.style={fill=white},
axis x line*=bottom,
axis y line*=left,
width=\tikzWidth,
height=\tikzHeight,
at={(0,0)}
]
\addplot [color=mycolor1, line width=1.0pt, forget plot]
  table[]{/home/ronald/git_repos/phd_project/papers/2021_2fluid_pt1_transport/tikz/advection_paper_cflcomp_energyG-1.tsv};
\addplot [color=mycolor1, dashed, line width=1.0pt, forget plot]
  table[]{/home/ronald/git_repos/phd_project/papers/2021_2fluid_pt1_transport/tikz/advection_paper_cflcomp_energyG-2.tsv};
\addplot [color=mycolor1, dotted, line width=1.0pt, forget plot]
  table[]{/home/ronald/git_repos/phd_project/papers/2021_2fluid_pt1_transport/tikz/advection_paper_cflcomp_energyG-3.tsv};
\addplot [color=mycolor2, dashed, line width=1.0pt, forget plot]
  table[]{/home/ronald/git_repos/phd_project/papers/2021_2fluid_pt1_transport/tikz/advection_paper_cflcomp_energyG-4.tsv};
\addplot [color=mycolor2, dotted, line width=1.0pt, forget plot]
  table[]{/home/ronald/git_repos/phd_project/papers/2021_2fluid_pt1_transport/tikz/advection_paper_cflcomp_energyG-5.tsv};
\addplot [color=mycolor2, line width=1.0pt, forget plot]
  table[]{/home/ronald/git_repos/phd_project/papers/2021_2fluid_pt1_transport/tikz/advection_paper_cflcomp_energyG-6.tsv};
\addplot [color=white!50!mycolor2, dashed, line width=1.0pt, mark size=2.5pt, mark=+, mark options={solid, white!50!mycolor2}, forget plot]
  table[]{/home/ronald/git_repos/phd_project/papers/2021_2fluid_pt1_transport/tikz/advection_paper_cflcomp_energyG-7.tsv};
\addplot [color=white!50!mycolor2, dotted, line width=1.0pt, mark size=2.5pt, mark=+, mark options={solid, white!50!mycolor2}, forget plot]
  table[]{/home/ronald/git_repos/phd_project/papers/2021_2fluid_pt1_transport/tikz/advection_paper_cflcomp_energyG-8.tsv};
\addplot [color=black, forget plot]
  table[]{/home/ronald/git_repos/phd_project/papers/2021_2fluid_pt1_transport/tikz/advection_paper_cflcomp_energyG-9.tsv};
\node[below, align=center]
at (axis cs:1.042e-01,3.355e-05) {$\mathcal{O}(h^{2})$};
\end{axis}
\end{tikzpicture}%
  }
  \caption{Comparison of the parameters $\wycflval$ and $\minfracval$ for the 2D vortex reverse problem, using the \gls{lw} flux interpolant.
  The markers (shown for only for two pairs of $(\wycflval, \minfracval)$) result from using the Fromm flux interpolant.
  The \twofluid formulation was used.}
  \label{fig:validation:advection:cflcomp}
\end{figure}
We consider mesh refinement for $\wycflval \in \{0.1, 0.5, 1.0\}$ and $\minfracval \in \{0.1, 0.5\}$, as shown in~\cref{fig:validation:advection:cflcomp}.
In~\cref{fig:validation:advection:cflcomp_linf} the resulting $L^\infty$-norm of the error in the gas phase is shown, where we made use of the \gls{lw} flux interpolant (no markers) as well as the Fromm flux interpolant (markers, shown only for two pairs of $(\wycflval, \minfracval)$).
We find that varying the parameters hardly affects the accuracy at the interface when the \gls{lw} flux interpolant is used.
What little effect there is, consistently shows that taking larger time steps (i.e. considering larger values of $\wycflval$) yields a more accurate solution for a fixed value of $\minfracval$.
When the Fromm flux interpolant is used, we find that using $\minfracval = 0.1$ yields divergence of the solution under time step refinement (i.e. $\wycflval \rightarrow 0$), suggesting that $\minfracval$ should not be taken too small when the Fromm flux interpolant is used.
Note that we have shown the results using the Fromm flux interpolant only for those two pairs of $(\wycflval, \minfracval)$ which yield divergence of the solution.
Exactly why this occurs with the Fromm flux interpolant, and not with the \gls{lw} flux interpolant, is unclear.


The loss in kinetic energy is shown in~\cref{fig:validation:advection:cflcomp_energy}. 
A lower value of $\minfracval$ does lead to improved conservation of kinetic energy when the \gls{lw} flux interpolant is used, as can be expected because the \gls{ctu} flux interpolant is used less frequently.
Moreover we find that taking a smaller time step, resulting in a smaller value of $\wycflval$, similarly leads to improved conservation of kinetic energy, as can be expected from the discussion in~\cref{sec:app:quadratic}.

\begin{figure}
  \centering
  
  \subcaptionbox{Change in gas kinetic energy as function of time.\label{fig:validation:advection:show_energy}}
  [\twofigwidth]{
    \tikzsetnextfilename{advection_show_energy_legend}
%
%
\definecolor{mycolor1}{rgb}{0.00000,0.44700,0.74100}%
\definecolor{mycolor2}{rgb}{0.85000,0.32500,0.09800}%
\definecolor{mycolor3}{rgb}{0.46600,0.67400,0.18800}%
\definecolor{mycolor4}{rgb}{0.49400,0.18400,0.55600}%
\begin{tikzpicture}

\begin{axis}[%
  hide axis,
  width=2cm,
  height=2cm,
  ymin=0.99,
  ymax=1,
  legend style={at={(0.5,1.03)}, anchor=south, legend columns=5, legend cell align=left, align=left, draw=white!15!black},
  ]
  \addlegendimage{empty legend}
  \addlegendentry{$\minfracval = \wycflval$: }
  \addplot [line width=1.0pt, color=mycolor1, domain=-0.1:0.1,samples=2, draw=none] {1};
  \addlegendentry{$10^{0}$}
  \addplot [line width=1.0pt, color=mycolor2, domain=-0.1:0.1,samples=2, draw=none] {1};
  \addlegendentry{$10^{-1}$}
  \addplot [line width=1.0pt, color=mycolor3, domain=-0.1:0.1,samples=2, draw=none] {1};
  \addlegendentry{$10^{-2}$}
  \addplot [line width=1.0pt, color=mycolor4, domain=-0.1:0.1,samples=2, draw=none] {1};
  \addlegendentry{$10^{-3}$}
  
\end{axis}
\end{tikzpicture}%

    \def\tikzWidth{\textwidth*0.33} 
    \def\tikzHeight{\textwidth*0.3} 
    \tikzsetnextfilename{advection_paper_show_energy}
%
%
\definecolor{mycolor1}{rgb}{0.00000,0.44700,0.74100}%
\definecolor{mycolor2}{rgb}{0.85000,0.32500,0.09800}%
\definecolor{mycolor3}{rgb}{0.46600,0.67400,0.18800}%
\definecolor{mycolor4}{rgb}{0.49400,0.18400,0.55600}%
\begin{tikzpicture}

\begin{axis}[%
width=8.159in,
height=10.014in,
at={(1.369in,1.352in)},
scale only axis,
xmin=0,
xmax=1,
xlabel style={font=\color{white!15!black}},
xlabel={$t / T$},
ymode=log,
ymin=1e-06,
ymax=0.01,
yminorticks=true,
ylabel style={font=\color{white!15!black}},
ylabel={$\abs{\Delta E^g} / E^g(0)$},
axis background/.style={fill=white},
axis x line*=bottom,
axis y line*=left,
legend style={at={(0.5,0.03)}, anchor=south, legend cell align=left, align=left, draw=white!15!black},
width=\tikzWidth,
height=\tikzHeight,
at={(0,0)}
]
\addplot [color=mycolor1, line width=1.0pt]
  table[]{/home/ronald/git_repos/phd_project/papers/2021_2fluid_pt1_transport/tikz/advection_paper_show_energy-1.tsv};
\addlegendentry{LW}

\addplot [color=mycolor2, line width=1.0pt, forget plot]
  table[]{/home/ronald/git_repos/phd_project/papers/2021_2fluid_pt1_transport/tikz/advection_paper_show_energy-2.tsv};
\addplot [color=mycolor3, line width=1.0pt, forget plot]
  table[]{/home/ronald/git_repos/phd_project/papers/2021_2fluid_pt1_transport/tikz/advection_paper_show_energy-3.tsv};
\addplot [color=mycolor4, line width=1.0pt, forget plot]
  table[]{/home/ronald/git_repos/phd_project/papers/2021_2fluid_pt1_transport/tikz/advection_paper_show_energy-4.tsv};
\addplot [color=white!50!mycolor1, line width=1.0pt, forget plot]
  table[]{/home/ronald/git_repos/phd_project/papers/2021_2fluid_pt1_transport/tikz/advection_paper_show_energy-5.tsv};
\addplot [color=white!50!mycolor1, line width=1.0pt, draw=none, mark=+, mark options={solid, mycolor1}]
  table[]{/home/ronald/git_repos/phd_project/papers/2021_2fluid_pt1_transport/tikz/advection_paper_show_energy-6.tsv};
\addlegendentry{Fromm}

\addplot [color=white!50!mycolor2, line width=1.0pt, forget plot]
  table[]{/home/ronald/git_repos/phd_project/papers/2021_2fluid_pt1_transport/tikz/advection_paper_show_energy-7.tsv};
\addplot [color=mycolor2, line width=1.0pt, draw=none, mark=+, mark options={solid, mycolor2}, forget plot]
  table[]{/home/ronald/git_repos/phd_project/papers/2021_2fluid_pt1_transport/tikz/advection_paper_show_energy-8.tsv};
\addplot [color=white!50!mycolor3, line width=1.0pt, forget plot]
  table[]{/home/ronald/git_repos/phd_project/papers/2021_2fluid_pt1_transport/tikz/advection_paper_show_energy-9.tsv};
\addplot [color=mycolor3, line width=1.0pt, draw=none, mark=+, mark options={solid, mycolor3}, forget plot]
  table[]{/home/ronald/git_repos/phd_project/papers/2021_2fluid_pt1_transport/tikz/advection_paper_show_energy-10.tsv};
\addplot [color=white!50!mycolor4, line width=1.0pt, forget plot]
  table[]{/home/ronald/git_repos/phd_project/papers/2021_2fluid_pt1_transport/tikz/advection_paper_show_energy-11.tsv};
\addplot [color=mycolor4, line width=1.0pt, draw=none, mark=+, mark options={solid, mycolor4}, forget plot]
  table[]{/home/ronald/git_repos/phd_project/papers/2021_2fluid_pt1_transport/tikz/advection_paper_show_energy-12.tsv};
\end{axis}
\end{tikzpicture}%
  }\hfill
  \subcaptionbox{The $L^\infty$-norm of the error at $t = T$.\label{fig:validation:advection:show_energy_linf}}
  [\twofigwidth]{
    \def\tikzWidth{\textwidth*0.33} 
    \def\tikzHeight{\textwidth*0.3} 
    \tikzsetnextfilename{advection_paper_show_energy_linf}
%
%
\definecolor{mycolor1}{rgb}{0.00000,0.44700,0.74100}%
\definecolor{mycolor2}{rgb}{0.85000,0.32500,0.09800}%
\definecolor{mycolor3}{rgb}{0.46600,0.67400,0.18800}%
\definecolor{mycolor4}{rgb}{0.49400,0.18400,0.55600}%
\begin{tikzpicture}

\begin{axis}[%
scale only axis,
xmode=log,
xmin=0.001,
xmax=1,
xminorticks=true,
xlabel style={font=\color{white!15!black}},
xlabel={$\wycflval = \minfracval$},
ymode=log,
ymin=0.01,
ymax=10,
yminorticks=true,
ylabel style={font=\color{white!15!black}},
ylabel={$\|\approximate{\avarstag} - \avarstag\|_{L^\infty(\Omega^g)}$},
axis background/.style={fill=white},
axis x line*=bottom,
axis y line*=left,
legend style={legend cell align=left, align=left, draw=white!15!black},
width=\tikzWidth,
height=\tikzHeight,
at={(0,0)}
]
\addplot [color=black, dashed, line width=1.0pt, forget plot]
  table[]{/home/ronald/git_repos/phd_project/papers/2021_2fluid_pt1_transport/tikz/advection_paper_show_energy_linf-1.tsv};
\addplot [color=mycolor1, line width=1.0pt, draw=none, mark size=0.8pt, only marks, mark=*, mark options={solid, mycolor1}]
  table[]{/home/ronald/git_repos/phd_project/papers/2021_2fluid_pt1_transport/tikz/advection_paper_show_energy_linf-2.tsv};
\addlegendentry{LW}

\addplot [color=mycolor2, line width=1.0pt, draw=none, mark size=0.8pt, only marks, mark=*, mark options={solid, mycolor2}, forget plot]
  table[]{/home/ronald/git_repos/phd_project/papers/2021_2fluid_pt1_transport/tikz/advection_paper_show_energy_linf-3.tsv};
\addplot [color=mycolor3, line width=1.0pt, draw=none, mark size=0.8pt, only marks, mark=*, mark options={solid, mycolor3}, forget plot]
  table[]{/home/ronald/git_repos/phd_project/papers/2021_2fluid_pt1_transport/tikz/advection_paper_show_energy_linf-4.tsv};
\addplot [color=mycolor4, line width=1.0pt, draw=none, mark size=0.8pt, only marks, mark=*, mark options={solid, mycolor4}, forget plot]
  table[]{/home/ronald/git_repos/phd_project/papers/2021_2fluid_pt1_transport/tikz/advection_paper_show_energy_linf-5.tsv};
\addplot [color=gray, dashed, line width=1.0pt, forget plot]
  table[]{/home/ronald/git_repos/phd_project/papers/2021_2fluid_pt1_transport/tikz/advection_paper_show_energy_linf-6.tsv};
\addplot [color=mycolor1, line width=1.0pt, draw=none, mark size=2.5pt, only marks, mark=+, mark options={solid, mycolor1}]
  table[]{/home/ronald/git_repos/phd_project/papers/2021_2fluid_pt1_transport/tikz/advection_paper_show_energy_linf-7.tsv};
\addlegendentry{Fromm}

\addplot [color=mycolor2, line width=1.0pt, draw=none, mark size=2.5pt, only marks, mark=+, mark options={solid, mycolor2}, forget plot]
  table[]{/home/ronald/git_repos/phd_project/papers/2021_2fluid_pt1_transport/tikz/advection_paper_show_energy_linf-8.tsv};
\addplot [color=mycolor3, line width=1.0pt, draw=none, mark size=2.5pt, only marks, mark=+, mark options={solid, mycolor3}, forget plot]
  table[]{/home/ronald/git_repos/phd_project/papers/2021_2fluid_pt1_transport/tikz/advection_paper_show_energy_linf-9.tsv};
\addplot [color=mycolor4, line width=1.0pt, draw=none, mark size=2.5pt, only marks, mark=+, mark options={solid, mycolor4}, forget plot]
  table[]{/home/ronald/git_repos/phd_project/papers/2021_2fluid_pt1_transport/tikz/advection_paper_show_energy_linf-10.tsv};
\end{axis}
\end{tikzpicture}%
  }
  
  \caption{Reducing the time-step size ($\wycflval \rightarrow 0$, where we let $\minfracval = \wycflval$), for the 2D vortex reverse problem, where the mesh width is fixed and given by $h / R \approx 0.1$.
  We only show results from the gas phase, as results from the liquid phase show the same trends.
  The \twofluid formulation was used.}
\end{figure}
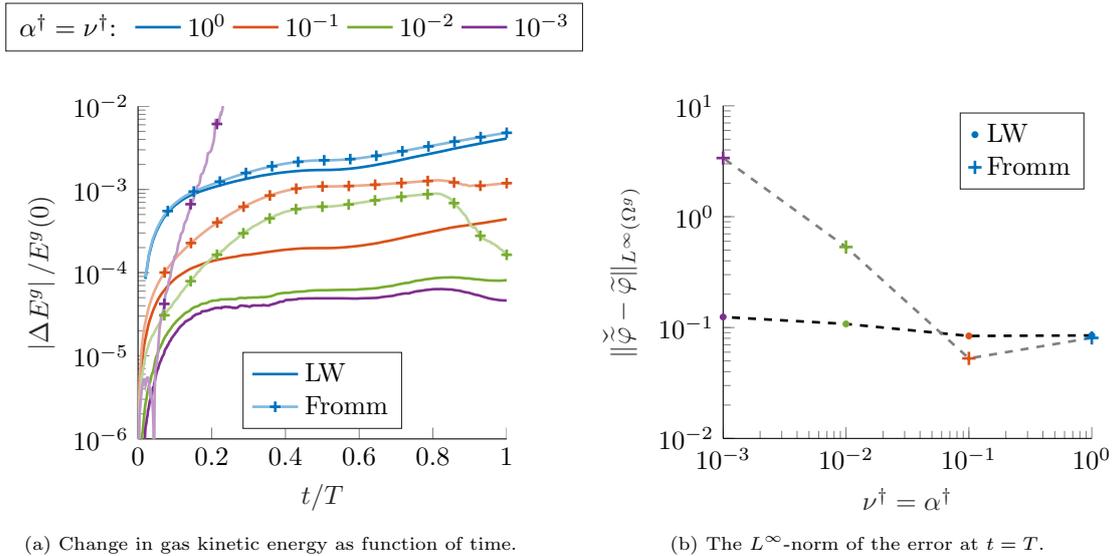
This leads to the question of how much the conservation of kinetic energy can be improved by letting both $\wycflval$ and $\minfracval$ tend to zero.
Based on the upper bound given by~\cref{eqn:momentum:unsplit:modified_simplified}, where the ratio $\wycflval / \minfracval$ appears, we now let $\minfracval = \wycflval$ such that we use the \gls{ctu} flux interpolant as little as possible, while still having boundedness of the solution.
We let $h / R \approx 0.1$, and consider four different values of $\minfracval = \wycflval \in \{10^0, 10^{-1}, 10^{-2}, 10^{-3}\}$.
The resulting evolution of kinetic energy in the gas phase is shown in~\cref{fig:validation:advection:show_energy}, where we consider both the \gls{lw} and Fromm flux interpolant.
For the \gls{lw} flux interpolant we initially find a significant improvement in the conservation of kinetic energy when decreasing the value of $\minfracval = \wycflval$.
Hence even though we use the \emph{modified} \gls{lw} flux interpolant, which uses the dissipative \gls{ctu} flux interpolant (which does not satisfy~\cref{eqn:app:quadratic:lwlimit}) at the interface, the analysis presented in~\cref{sec:app:quadratic} still holds in some sense when $\minfracval = \wycflval \rightarrow 0$.
On the other hand, the Fromm flux interpolant does not yield much of an improvement in the conservation of kinetic energy and moreover yields divergence of the solution, as is illustrated in~\cref{fig:validation:advection:show_energy_linf}.
It's unclear why the Fromm flux interpolant leads to divergence of the solution when $\minfracval = \wycflval \rightarrow 0$.

  \section{Discussion}\label{sec:discussion}
In an effort to model the shear layer at the interface, we have proposed the use of a \twofluid formulation of the two-phase Navier--Stokes equations.
For now we have focussed our attention on the transport of mass and momentum in such a \twofluid formulation.

To conserve mass and momentum in a sharp and accurate way, we follow the approach originally proposed in~\citet{Rudman1998} and use the same approximate space-time integration of the advection equation for momentum as is used for the advection of mass.
The dimensionally unsplit advection method used for the transport of mass and momentum relies on the construction of donating regions.
We have derived sufficient conditions on these donating regions which ensure boundedness of the resulting volume fraction (\cref{cor:interface:onefluid:boundedness}).

For the advection of a staggered momentum field we propose a simple averaging of the same mass fluxes that are used for advecting the centred mass.
For our proposed one- and \twofluid formulations this implies that, besides the conservation of mass, the total and liquid linear momentum respectively are conserved exactly, without the need for computing additional mass fluxes.
We find that such an interpolation yields a direct relation between the centred and staggered advection methods (as given by~\cref{eqn:mass:onefluid:advection_property}).
Furthermore we show that the advection method in semi-discrete form conserves quadratic invariants (i.e. kinetic energy), provided that the Lax--Wendroff flux interpolant is used.

The velocity (per phase) can be computed by dividing the momentum with the corresponding mass, however such a division is not always well-defined if the corresponding mass nearly vanishes during a single time step.
We have introduced a modified flux interpolant for which we can guarantee that the division is well defined, under the condition that the donating regions are absent of any flux overlap and transit errors (\cref{lem:momentum:ctu_boundedness,lem:momentum:mod_boundedness}).

The proposed methods are shown to converge under mesh refinement for two- and three-dimensional reversible deformation test cases.
The passive transport of a discontinuous  scalar shows that the \onefluid formulation yields low accuracy for the lighter of the two phases. This is because the advection of the velocity field in the \onefluid formulation favours the heavier liquid phase.
To the contrary, using the \twofluid formulation results in the same accuracy for both phases, regardless of the density ratio.

In a future paper we will focus our attention on coupling the two phases in the Navier--Stokes equations via a novel jump condition that is included in the pressure Poisson problem.
For future work it would moreover be interesting to see if a less dissipative flux interpolant can be constructed at the interface, resulting in second-order accuracy in the $L^\infty$-norm, while still guaranteeing boundedness.
The proposed methods are compatible with adaptive mesh refinement (AMR).
The restriction and prolongation operators between the refinement levels are however not yet momentum conservative, and it would therefore be interesting to develop this and obtain exact momentum conservation also when AMR is used.

  \section*{Acknowledgements}
This work is part of the research programme SLING, which is (partly) financed by the Netherlands Organisation for Scientific Research (NWO).
We would like to thank the Center for Information Technology of the University of Groningen for their support and for providing access to the Peregrine high performance computing cluster.
Moreover we thank Dr. Joaquín López (Universidad Politécnica de Cartagena) for kindly providing the VoFTools library.

  \appendix
  \setcounter{figure}{0}
  \section{Operator connection}\label{sec:app:connection}
\begin{figure}
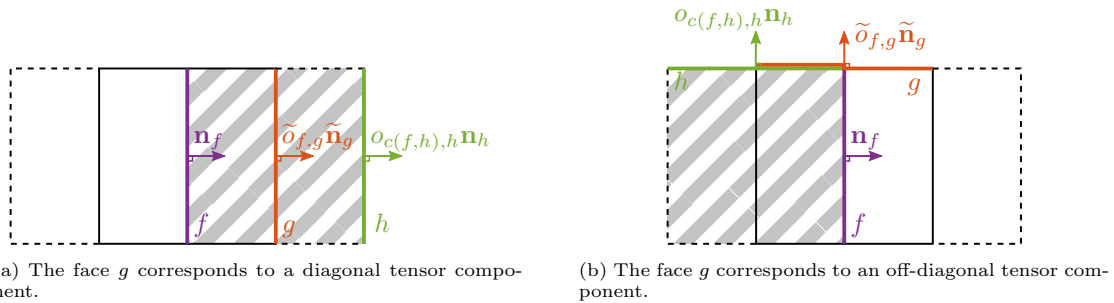

  \subcaptionbox{The face $g$ corresponds to a diagonal tensor component.}
  [\twofigwidth]{
    \import{inkscape/}{the_mesh_stag_connection_diagonal.pdf_tex}
  }\hfill
  \subcaptionbox{The face $g$ corresponds to an off-diagonal tensor component.}
  [\twofigwidth]{
    \import{inkscape/}{the_mesh_stag_connection_offdiagonal.pdf_tex}
  }
  \caption{We can uniquely identify a centred control volume $\connectionc \in \mathcal{C}(f)$ to the pair $f, h$, provided that $g \in \mathcal{G}(\omega_f)$, $h \in \mathcal{F}(g)$ and $f \neq h$.
  The faces $f, h$ both belong to the boundary of the hatched centred control volume $\connectionc$ and therefore $f, h \in \mathcal{F}(\connectionc)$.}\label{fig:app:connection:illustrate}
\end{figure}
\lemmaoperatorconnection*
\begin{proof}
  We start by applying the left-hand side of~\caref{eqn:notation:notation:operator_connection} to some field $\massflux \in \mathcal{F}^h$
  \begin{equation}\label{eqn:notation:notation:operator_connection:intermediate}
    |\omega_f|(\stagger \divh \interpolantflux \massflux)_f = \half\sum_{g \in \mathcal{G}(\omega_f)}  \sum_{h\in\mathcal{F}(g)} \stagger\orientation_{f,g} (\stagger{\+n}_g \cdot \+n_h) |h| \massflux_h,
  \end{equation} 
  where we have substituted~\cref{eqn:notation:notation:divergence_stag,eqn:notation:notation:interpolantsbp_stag}.
  The union over the faces $\mathcal{F}(g)$, for all faces $g \in \mathcal{G}(\omega_f)$, is the same as the union over the faces $\mathcal{F}(c)$, for all control volumes $c \in \mathcal{C}(f)$ (see also~\cref{fig:notation:notation:set_stag})
  \begin{equation}\label{eqn:app:connection:samesets}
    \bigcup_{g \in \mathcal{G}(\omega_f)} \mathcal{F}(g) = \bigcup_{c \in \mathcal{C}(f)} \mathcal{F}(c).
  \end{equation}
  This identity can be exploited to rewrite the double summation in~\cref{eqn:notation:notation:operator_connection:intermediate}, but in doing so we must get rid of the dependence on $g \in \mathcal{G}(\omega_f)$ in the $\stagger\orientation_{f,g} (\stagger{\+n}_g \cdot \+n_h)$ term, and instead express it in terms of $c \in \mathcal{C}(f)$.

  In order to do so, we uniquely identify a centred control volume to the pair $f, h$, which we denote by $\connectionc$, and for which it holds that $f, h \in \mathcal{F}(\connectionc)$, see also~\cref{fig:app:connection:illustrate}.
  Provided that $f \neq h$, it holds that the outward pointing normal of the staggered control volume $\stagger\omega_f$ coincides with the outward pointing normal of the centred control volume $c \in \mathcal{C}(f)$
  \begin{equation}\label{eqn:app:connection:centred_orientation}
    \stagger\orientation_{f,g} \stagger{\+n}_g = \orientation_{\connectionc,h} \+n_h \quad \implies \quad \stagger\orientation_{f,g} (\stagger{\+n}_g \cdot \+n_h) = \orientation_{\connectionc,h},
  \end{equation}
  as illustrated in~\cref{fig:app:connection:illustrate}.
  Note that the two contributions in~\cref{eqn:notation:notation:operator_connection:intermediate} for $f = h$ cancel, and therefore it does not matter that the above identity only holds for $f \neq h$.
  Substitution of~\cref{eqn:app:connection:centred_orientation} into~\cref{eqn:notation:notation:operator_connection:intermediate}, and subsequently utilising~\cref{eqn:app:connection:samesets}, results in
  \begin{equation}
    |\omega_f|(\stagger \divh \interpolantflux \massflux)_f = \half\sum_{g \in \mathcal{G}(\omega_f)}  \sum_{h\in\mathcal{F}(g)} \orientation_{\connectionc,h} |h| \massflux_h
    \stackrel{\eqref{eqn:app:connection:samesets}}{=} \half\sum_{c \in \mathcal{C}(f)}  \sum_{h\in\mathcal{F}(c)} \orientation_{c,h} |h| \massflux_h
    = |\omega_f|(\interpolantsbp \divh \massflux)_f\label{eqn:notation:notation:operator_connection:final},
  \end{equation} 
  where we have used the definition of the divergence and interpolation operator given by~\cref{eqn:notation:notation:interpolantsbp,eqn:notation:notation:divergence}.
  Equation~\eqref{eqn:notation:notation:operator_connection:final} holds for any $\massflux \in \mathcal{F}^h$ and therefore we find that~\caref{eqn:notation:notation:operator_connection} indeed holds true.
\end{proof}

\section{Analysis of DR methods}\label{sec:app:dr_analysis}
\lemmactuboundedoutflow*
\begin{proof}
  We will omit the superscript $(n)$.
  Substitution of the definition of the partial volume fluxes~\cref{eqn:centred:volume_flux_per_cell} into the definition of the outgoing volume~\cref{eqn:centred:advection_inoutgoing_volume} results in
  \begin{equation}
    |c|\totalfluxctu^{-,\pi}_c = -\sum_{b \in \mathcal{C}(c)} \negflux{-\sum_{f \in \mathcal{F}(c)} \orientation_{c,f} M_0(b \cap \Omega^{\pi} \cap \drOne_f)},
  \end{equation}
  where we recall that $b^{\pi} = b \cap \Omega^{\pi}$.
  By making use of the notation of scalar multiplication, as defined in~\cref{eqn:interfacee:dr:mult}, of an oriented DR (where the scalar must be plus or minus one), we can move the multiplication by $-\orientation_{c,f}$ inside $M_0(\ldots)$ to the oriented set $\drOne_f$, resulting in
  \begin{equation}
    |c|\totalfluxctu^{-,\pi}_c = -\sum_{b \in \mathcal{C}(c)} \negflux{\sum_{f \in \mathcal{F}(c)}  M_0(b \cap \Omega^{\pi} \cap (-\orientation_{c,f}\drOne_f))}.
  \end{equation}
  Note that an absence of flux overlap errors (see~\cref{eqn:interface:onefluid:dr:overlap}) implies that the summation over the faces of the volumes can equivalently be written as the volume of the union of the sets, that is
  \begin{equation}\label{eqn:app:bounded_outflow:outflow}
    |c|\totalfluxctu^{-,\pi}_c = -\sum_{b \in \mathcal{C}(c)} \negflux{M_0(b \cap \Omega^{\pi} \cap \drOne_c)}
  \end{equation}
  where we have defined the oriented union of the DRs as
  \begin{equation}\label{eqn:app:bounded_outflow:union}
    \drOne_c \defeq \bigcup_{f\in\mathcal{F}(c)} -\orientation_{c,f}\drOne_f.
  \end{equation}

  For any two non-oriented sets $A, B$ it holds that $A$ can be written as the union of the following two non-overlapping sets: $A = (A \setminus B) \cup (A \cap B)$.
  The definition of our oriented set allows for a similar result
  \begin{equation}\label{eqn:app:bounded_outflow:split}
    \drOne_c = (\drOne_c \setminus -\drOne_c) \cup (\drOne_c \cap -\drOne_c),
  \end{equation}
  where $\drOne_c \cap -\drOne_c$ results from overlapping DRs with opposite relative orientation, resulting in the phase volume that is merely in transit as shown by the hatched regions in~\cref{fig:interface:mass_transport}.
  Moreover, the intersection volume of any non-oriented set $A$ with the second bracketed term in the right-hand side of~\cref{eqn:app:bounded_outflow:split} vanishes
  \begin{equation}\label{eqn:app:bounded_outflow:vanishing_intersection}
    M_0(A \cap (\drOne_c \cap -\drOne_c)) = |A \cap (\drOne_c^+ \cap \drOne_c^-)| - |A \cap (\drOne_c^- \cap \drOne_c^+)| = 0,
  \end{equation}
  which follows from~\cref{eqn:interface:dr:signed_vol,eqn:interfacee:dr:mult}, and reflects the fact that fluid that is merely in transit does not contribute to the change in volume within the control volume $c$.

  The two bracketed sets in~\cref{eqn:app:bounded_outflow:split} are non-overlapping and therefore the outgoing volume can be written as
  \begin{align}
    |c|\totalfluxctu^{-,\pi}_c &= -\sum_{b \in \mathcal{C}(c)} \negflux{M_0(b \cap \Omega^{\pi} \cap (\drOne_c \setminus -\drOne_c))}\\
    &= -\sum_{b \in \mathcal{C}(c)} \negflux{|b \cap \Omega^{\pi} \cap (\drOne^{+}_c \setminus \drOne^{-}_c)| - |b \cap \Omega^{\pi} \cap (\drOne^{-}_c \setminus \drOne^{+}_c)|},\label{eqn:app:bounded_outflow:outgoing_volume_intermediate}
  \end{align}
  where we made use of~\cref{eqn:app:bounded_outflow:outflow,eqn:app:bounded_outflow:split,eqn:app:bounded_outflow:vanishing_intersection,eqn:interface:dr:signed_vol}.
  Recall from~\cref{sec:interface:oriented_dr} that the orientation of a part of the DR $\drOne_f$ is defined as negative if the face normal $\+n_f$ points into it, and therefore the positively orientated part $\drOne^+_c$ of the oriented union of the DRs, as given by~\cref{eqn:app:bounded_outflow:union}, lies outside of $c$: $\drOne^+_c \cap c = \emptyset$.
  From this observation it follows that $b \cap \drOne^+_c \setminus \drOne^-_c = \emptyset$ if $b = c$ and moreover assuming an absence of flux transit errors (see~\cref{eqn:interface:onefluid:dr:transit}) implies that $b \cap \drOne^-_c \setminus \drOne^+_c = \emptyset$ if $b \neq c$. 
  It follows that~\cref{eqn:app:bounded_outflow:outgoing_volume_intermediate} can be written as
  \begin{equation}
    |c|\totalfluxctu^{-,\pi}_c = -\sum_{b \in \mathcal{C}(c)\setminus c} \negflux{|b \cap \Omega^{\pi} \cap (\drOne^{+}_c \setminus \drOne^{-}_c)|} - \negflux{ - |c \cap \Omega^{\pi} \cap (\drOne^{-}_c \setminus \drOne^{+}_c)|} = |c \cap \Omega^{\pi} \cap (\drOne^{-}_c \setminus \drOne^{+}_c)|,
  \end{equation}
  and therefore
  \begin{equation}
    |c|\totalfluxctu^{-,\pi}_c \le |c \cap \Omega^{\pi}| = |c|\volfrac^{\pi}_c,
  \end{equation}
  which coincides with the bounded outflow condition~\caref{eqn:centred:bounded_outflow}.
\end{proof}

\lemmactuboundedness*
\begin{proof}
  We substitute $\volfracstag^{\pi,(n)} = \volfracstag^{\pi,(n+1)} + \dt \stagger \divh\volumefluxOneStag^{\pi,(n)}$ (as follows from~\cref{eqn:momentum:onefluid:transport,eqn:mass:advection_stagdef,eqn:centred:volfluxstag,eqn:mass:onefluid:massflux_def} with $\avarstag \equiv 1$) into~\cref{eqn:centred:approx_dr_solve} and divide out the constant density $\rho^\pi$ which appears in both the numerator and denominator, this results in
  \begin{equation}
    \avarstag^{\pi,(n+1)} = \avarstag^{\pi,(n)} + \dt \frac{(\stagger \divh\volumefluxOneStag^{\pi,(n)})\avarstag^{\pi,(n)} - \stagger \divh(\volumefluxOneStag^{\pi,(n)}\fluxinterpstag^\mtext{\gls{ctu}}\avarstag^{\pi,(n)})}{\volfracstag^{\pi,(n+1)}},
  \end{equation}
  where we have made use of~\cref{eqn:mass:advection_stagdef} with $\stagger\fluxinterp = \stagger\fluxinterp^\mtext{\gls{ctu}}$.
  Subsequently we combine both terms in the numerator (recall that the staggered divergence operator $\stagger \divh$ is given by~\cref{eqn:notation:notation:divergence_stag})
  \begin{equation}\label{eqn:app:ctuboundedness:intermediate_form}
    \avarstag^{\pi,(n+1)}_f = \avarstag^{\pi,(n)}_f + \dt \frac{\sum_{g\in\mathcal{G}(\omega_f)} \stagger\orientation_{f,g}|g| \volumefluxOneStag^{\pi,(n)}_g \roundpar{\avarstag^{\pi,(n)}_f - (\fluxinterpstag^\mtext{\gls{ctu}}\avarstag^{\pi,(n)})_g}}{|\omega_f|\volfracstag^{\pi,(n+1)}_f},
  \end{equation}
  and substitute the definition of the \gls{ctu} flux interpolant~\cref{eqn:centred:ctu_flux}
  \begin{equation}\label{eqn:app:ctuboundedness:approx_ctuform}
    \avarstag^{\pi,(n+1)}_f = \avarstag^{\pi,(n)}_f + \dt \frac{ \sum_{k \in \mathcal{F}(f)} \sum_{g\in\mathcal{G}(\omega_f)} \stagger\orientation_{f,g}|g| \volumefluxOneStag^{\pi,(n)}_{g,k}\roundpar{\avarstag^{\pi,(n)}_f - \avarstag^{\pi,(n)}_k}}{|\omega_f|\volfracstag_f^{\pi,(n+1)}},
  \end{equation}
  where we have swapped the order of summation.
  We now split the term in the numerator of~\cref{eqn:app:ctuboundedness:approx_ctuform} into the parts corresponding to the in- and outgoing flow 
  \begin{equation}\label{eqn:app:ctuboundedness:ctu_numerator_split}
    \frac{\avarstag^{\pi,(n+1)}_f - \avarstag^{\pi,(n)}_f}{\dt} = \frac{N^{+,\pi}_{f} - N^{-,\pi}_{f}}{\volfracstag_f^{\pi,(n+1)}},
  \end{equation}
  where the terms in the numerator are given by
  \begin{equation}\label{eqn:app:ctuboundedness:ctu_numerator_flux}
    N^{\pm,\pi}_{f} \defeq \pm \frac{1}{|\omega_f|}\sum_{k \in \mathcal{F}(f)} \underbrace{\squarepar{-\sum_{g\in\mathcal{G}(\omega_f)} \stagger\orientation_{f,g}|g| \volumefluxOneStag^{\pi,(n)}_{g,k}}^\pm}_\text{fluid that flows out of $\omega_f$ ($-$), or from $\omega_k$ into $\omega_f$ ($+$).} \roundpar{\avarstag^{\pi,(n)}_k - \avarstag^{\pi,(n)}_f},
  \end{equation}
  corresponding to an outflow of $\avarstag^{\pi,(n)}$ given by $N^{-,\pi}_{f}$, and an inflow into $\omega_f$ given by $N^{+,\pi}_{f}$.
  Note that swapping the order of summation in~\cref{eqn:app:ctuboundedness:approx_ctuform}, combined with the placement of $\squarepar{\ldots}^\pm$ in~\cref{eqn:app:ctuboundedness:ctu_numerator_flux}, exactly  results in the cancellation of any fluxes resulting from neighbouring DRs which are overlapping with an opposite relative orientation.
  This step is essential in being able to separately bound each of the terms in the numerator of~\cref{eqn:app:ctuboundedness:ctu_numerator_split}, and explicitly relies on the use of the partial volume fluxes.

  \def\connectione{\hat b(g,h,k)}
  \def\connectionei{\hat b(g,h_i,k)}
  \def\connectionec{b(c,f,k)}
  \begin{figure}
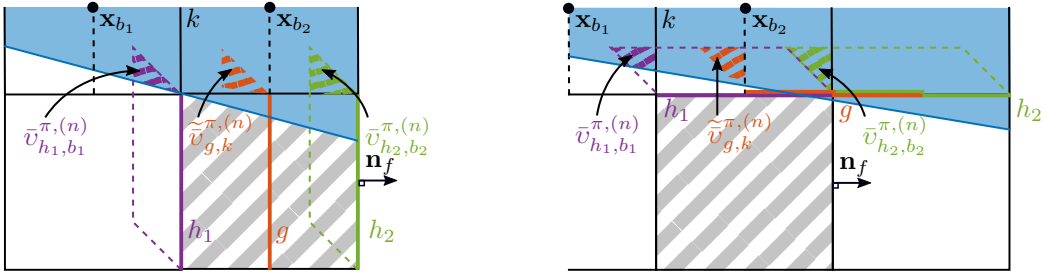

    \def\be{b}
    \subcaptionbox{The face $g$ corresponds to a diagonal tensor component.
    Here $f = h_2$.}
    [0.45\textwidth]{
      \import{inkscape/}{the_mesh_stag_partialflux_connection_diagonal.pdf_tex}
    }\hfill
    \subcaptionbox{The face $g$ corresponds to an off-diagonal tensor component.}
    [0.5\textwidth]{
      \import{inkscape/}{the_mesh_stag_partialflux_connection_offdiagonal.pdf_tex}
    }
    \caption{Illustration of the interpolated partial volume flux as defined in~\cref{eqn:app:ctuboundedness:interp_partial_flux}.
    The centred control volumes (solid lines) $b_i$ are positioned the same, relative to faces $h_i$, as the staggered control volume $\omega_k$ is positioned relative to $g$.
    In this example we find $b_i = \connectionei \in \mathcal{C}(k)$.
    The gray hatched region corresponds to the control volume $c$, where, for example, $b_1 = \connectionec$: $b_1$ is a diagonal neighbour of $c$ in the same way that $\omega_k$ is a diagonal neighbour of $\omega_{f}$.}\label{fig:app:ctuboundedness:connectione}
  \end{figure}
  For the remainder we need an explicit definition of the interpolated partial volume flux~\cref{eqn:centred:volume_flux_per_cell}, as illustrated in~\cref{fig:app:ctuboundedness:connectione} and given by (cf.~\cref{eqn:notation:notation:interpolantsbp_stag,eqn:centred:volfluxstag})
  \begin{equation}\label{eqn:app:ctuboundedness:interp_partial_flux}
    |g|\volumefluxOneStag^{\pi,(n)}_{g,k} \defeq \half \sum_{h \in \mathcal{F}(g)} (\stagger{\+n}_g \cdot \+n_h) |h| \volumefluxOne^{\pi,(n)}_{h,\connectione},
  \end{equation}
  where the centred control volume $\connectione \in \mathcal{C}(k)$ is as illustrated in~\cref{fig:app:ctuboundedness:connectione} and is positioned relative to $h$ in the same way as $\omega_k$ is positioned relative to $g$.
  Using this definition of the interpolated partial volume flux we can write~\cref{eqn:app:ctuboundedness:ctu_numerator_flux} as
  \begin{align}
    N^{\pm,\pi}_{f} &= \frac{\pm1}{2|\omega_f|}\sum_{k \in \mathcal{F}(f)} \squarepar{-\sum_{g\in\mathcal{G}(\omega_f)} \sum_{h \in \mathcal{F}(g)} \stagger\orientation_{f,g}(\stagger{\+n}_g \cdot \+n_h) |h| \volumefluxOne^{\pi,(n)}_{h,\connectione}}^\pm \roundpar{\avarstag^{\pi,(n)}_k - \avarstag^{\pi,(n)}_f}\\
    &= \frac{\pm1}{2|\omega_f|}\sum_{k \in \mathcal{F}(f)} \squarepar{-\sum_{c\in\mathcal{C}(f)} \sum_{h \in \mathcal{F}(c)} \orientation_{c,h} |h| \volumefluxOne^{\pi,(n)}_{h,\connectionec}}^\pm \roundpar{\avarstag^{\pi,(n)}_k - \avarstag^{\pi,(n)}_f},\label{eqn:app:ctuboundedness:inoutflow_rewritten}
  \end{align}
  where we have furthermore utilised~\cref{eqn:app:connection:samesets,eqn:app:connection:centred_orientation} and have expressed $\connectione$ as $\connectionec$.
  That is, $\omega_k$ is a neighbour of $\omega_f$ in the same way that $\connectionec$ is a neighbour of $c$, see also~\cref{fig:app:ctuboundedness:connectione}.

  We will now first show that the outflow term vanishes identically: $N^{-,\pi}_{f} = 0$.
  The arguments used are identical to those used in the proof of~\cref{thm:mass:bounded_outflow}. 
  Substitution of the definition of the partial volume flux given by~\cref{eqn:centred:volume_flux_per_cell} into~\cref{eqn:app:ctuboundedness:inoutflow_rewritten} results in
  \begin{equation}
    N^{-,\pi}_{f} = - \frac{1}{2\dt|\omega_f|} \sum_{k \in \mathcal{F}(f)} \squarepar{-\sum_{c\in\mathcal{C}(f)} \sum_{h \in \mathcal{F}(c)} \orientation_{c,h} M_0(\drOne^{(n)}_h \cap \connectionec^{\pi,(n)})}^- \roundpar{\avarstag^{\pi,(n)}_k - \avarstag^{\pi,(n)}_f}.
  \end{equation}
  We then move the multiplication by the orientation $-\orientation_{c,h}$ into the signed volume, and moreover make use of the assumption that neighbouring \glsplural{dr} do not overlap, which permits replacing the summation over $h \in \mathcal{F}(c)$ with a union of the \glsplural{dr} (as defined in~\cref{eqn:app:bounded_outflow:union}), resulting in
  \begin{equation}
    N^{-,\pi}_{f} = - \frac{1}{2\dt|\omega_f|} \sum_{k \in \mathcal{F}(f)} \squarepar{\sum_{c\in\mathcal{C}(f)}  M_0(\drOne^{(n)}_c \cap \connectionec^{\pi,(n)})}^- \roundpar{\avarstag^{\pi,(n)}_k - \avarstag^{\pi,(n)}_f}.
  \end{equation}
  Subsequently we make use of~\cref{eqn:app:bounded_outflow:split,eqn:app:bounded_outflow:vanishing_intersection}, from which it follows that the outflow term can be written as
  \begin{multline}
    N^{-,\pi}_{f} = - \frac{1}{2\dt|\omega_f|} \\ \sum_{k \in \mathcal{F}(f)} \squarepar{\sum_{c\in\mathcal{C}(f)} M_0(\connectionec^{\pi,(n)} \cap (\drOne^{(n)}_c \setminus -\drOne^{(n)}_c))}^- \roundpar{\avarstag^{\pi,(n)}_k - \avarstag^{\pi,(n)}_f}.\label{eqn:app:bounded_outflow:outflowterm}
  \end{multline}
  Note that only $k \neq f$ will contribute to~\cref{eqn:app:bounded_outflow:outflowterm}, and therefore by the definition of $\connectionec$ we can assume $\connectionec \neq c$.

  Recall that $\drOne^{(n)}_c$ is an oriented \gls{dr}, and therefore
  \begin{equation}\label{eqn:app:bounded_outflow:recall_signed_volume}
    M_0(b^\pi \cap (\drOne^{(n)}_c \setminus -\drOne^{(n)}_c)) \stackrel{\eqref{eqn:interface:dr:signed_vol}}{=} \abs{b^\pi \cap (\drOne^{+,(n)}_c \setminus \drOne^{-,(n)}_c)} - \abs{b^\pi \cap (\drOne^{-,(n)}_c \setminus \drOne^{+,(n)}_c)},
  \end{equation} 
  where we note that $\drOne^{-,(n)}_c\setminus \drOne^{+,(n)}_c \cap b = \emptyset$ if $b \neq c$, thanks to an absence of flux transit errors.
  This implies that~\eqref{eqn:app:bounded_outflow:recall_signed_volume} can be written as (assuming $b \neq c$)
  \begin{equation}
    M_0(b^\pi \cap (\drOne^{(n)}_c \setminus -\drOne^{(n)}_c)) = \abs{b^\pi \cap (\drOne^{+,(n)}_c \setminus \drOne^{-,(n)}_c)}.
  \end{equation}
  This result can be used to write~\cref{eqn:app:bounded_outflow:outflowterm} as
  \begin{multline}
    N^{-,\pi}_{f} = - \frac{1}{2\dt|\omega_f|} \\ \sum_{k \in \mathcal{F}(f)} \squarepar{\sum_{c\in\mathcal{C}(f)} \abs{\connectionec^{\pi,(n)} \cap (\drOne^{+,(n)}_c \setminus \drOne^{-,(n)}_c)}}^- \roundpar{\avarstag^{\pi,(n)}_k - \avarstag^{\pi,(n)}_f} = 0,\label{eqn:app:ctuboundedness:ctu_numerator_minus}
  \end{multline}
  where we made use of $[x]^- = 0$ if $x \geq 0$.
  Hence thanks to an assumed absence of flux overlap and transit errors, as well as the use of the \gls{ctu} flux interpolant, we can guarantee that the outflow $N^{-,\pi}_f$ does not affect the change in $\avarstag^{\pi}_f$ in~\cref{eqn:app:ctuboundedness:ctu_numerator_split}.
  
  The inflow, which was the first term in the numerator of the right-hand side of~\cref{eqn:app:ctuboundedness:ctu_numerator_split}, can be bounded in the following way
  \begin{equation}\label{eqn:app:ctuboundedness:ctu_numerator_plus}
    |N^{+,\pi}_{f}| \le \frac{\stagger V^{+,\pi}_f}{\dt} \max_{k\in\mathcal{F}(f)}\abs{\avarstag^{\pi,(n)}_k - \avarstag^{\pi,(n)}_f},
  \end{equation}
  where we made use of~\cref{eqn:app:ctuboundedness:inoutflow_rewritten} and we define the staggered ingoing flow (cf.~\cref{eqn:centred:advection_inoutgoing_volume}) as
  \begin{equation}\label{eqn:app:ctuboundedness:stagger_outvolume}
    \stagger V^{+,\pi}_f \defeq \frac{\dt}{2|\omega_f|}\sum_{k \in \mathcal{F}} \squarepar{-\sum_{c\in\mathcal{C}(f)} \sum_{h \in \mathcal{F}(c)} \orientation_{c,h} |h| \volumefluxOne^{\pi,(n)}_{h,\connectionec}}^+.
  \end{equation}
  In~\cref{eqn:app:ctuboundedness:stagger_outvolume} we have replaced the summation over $k \in \mathcal{F}(f)$ (summing over the neighbours of $\omega_f$) by a summation over $k \in \mathcal{F}$ (assuming a \gls{cfl} constraint there will be no contributions from $\mathcal{F}\setminus \mathcal{F}(f)$).
  We now note that summing over all $k \in \mathcal{F}$ results in all centred control volumes $\connectionec$ to be considered, which implies that the staggered ingoing flow can equivalently be written as
  \begin{equation}
    \stagger V^{+,\pi}_f \defeq \frac{\dt}{2|\omega_f|}\sum_{b \in \mathcal{C}} \squarepar{-\sum_{c\in\mathcal{C}(f)} \sum_{h \in \mathcal{F}(c)} \orientation_{c,h} |h| \volumefluxOne^{\pi,(n)}_{h,b}}^+.
  \end{equation}
  The staggered ingoing flow can now be bounded in the following way
  \begin{equation}
    \stagger V^{+,\pi}_f \le \frac{\dt}{2|\omega_f|}\sum_{c\in\mathcal{C}(f)}\sum_{b\in\mathcal{C}} \squarepar{- \sum_{h\in\mathcal{F}(c)} \orientation_{c,h}|h| \volumefluxOne^{\pi,(n)}_{h,b}}^+ \stackrel{\eqref{eqn:centred:advection_inoutgoing_volume}}{=} (\interpolantsbp V^{+,\pi})_f,\label{eqn:app:ctuboundedness:stagger_outvolume_bound}
  \end{equation}
  where we have used the algebraic identity $\posflux{a_1 + a_2 + \ldots} \le \posflux{a_1} + \posflux{a_2} + \ldots$ and finally have substituted the definition of the centred outgoing volume~\cref{eqn:centred:advection_inoutgoing_volume} as well as the interpolant given by~\cref{eqn:notation:notation:interpolantsbp}.

  Since we have assumed that the \glsplural{dr} do not commit any flux overlap nor transit errors, we may use the result of~\cref{thm:mass:bounded_outflow} which implies that the interpolated ingoing volume flow is bounded by the staggered volume fraction at $t = t^{(n+1)}$
  \begin{equation}\label{eqn:app:ctuboundedness:ctu_bounded_inflow}
    \interpolantsbp V^{+,\pi} \stackrel{\eqref{eqn:centred:advection_reorderedsummation}}{=} \interpolantsbp\squarepar{\volfrac^{\pi,(n+1)} - (\volfrac^{\pi,(n)} -  V^{-,\pi})} \le \interpolantsbp{\volfrac^{\pi,(n+1)}} = \volfracstag^{\pi,(n+1)}.
  \end{equation}
  Combining the results of~\cref{eqn:app:ctuboundedness:ctu_numerator_split,eqn:app:ctuboundedness:ctu_numerator_minus,eqn:app:ctuboundedness:ctu_numerator_plus,eqn:app:ctuboundedness:stagger_outvolume_bound,eqn:app:ctuboundedness:ctu_bounded_inflow} then concludes the proof.
\end{proof}

\lemmamodboundedness*
\begin{proof}
  As with the proof of~\cref{lem:momentum:ctu_boundedness}, we can write~\cref{eqn:momentum:onefluid:transport} as (cf.~\cref{eqn:app:ctuboundedness:ctu_numerator_split})
  \begin{equation}\label{eqn:momentum:flux_interp_mod:numerator_split}
    \frac{\avarstag_f^{\pi,(n+1)} - \avarstag_f^{\pi,(n)}}{\dt}  = \frac{M^{+,\pi}_{f} - M^{-,\pi}_{f}}{\volfracstag_f^{\pi,(n+1)}}
  \end{equation}
  where the terms in the numerator are defined as (cf.~\cref{eqn:app:ctuboundedness:ctu_numerator_flux})
  \begin{equation}
    M^{+,\pi}_{f} \defeq \pm\frac{1}{|\omega_f|}\sum_{g\in\mathcal{G}(\omega_f)} \squarepar{-\stagger\orientation_{f,g} |g| \volumefluxOneStag_g^{\pi,(n)}}^\pm\roundpar{(\fluxinterpstag^{\mtext{Method}^*}\avarstag^{\pi,(n)})_g - \avarstag_f^{\pi,(n)}}.
  \end{equation}
  Each of the terms in the numerator of the right-hand side of~\cref{eqn:momentum:flux_interp_mod:numerator_split} can be bounded as follows
  \begin{equation}\label{eqn:momentum:flux_interp_mod:numerator_bound}
    \abs{M^{\pm,\pi}_{f}} \le \frac{\stagger W^{\pm,\pi}_f}{\dt} \max_{g\in\mathcal{G}(\omega_f)} \abs{(\fluxinterpstag^{\mtext{Method}^*}\avarstag^{\pi,(n)})_g - \avarstag_f^{\pi,(n)}}.
  \end{equation}
  Combining~\cref{eqn:momentum:flux_interp_mod:numerator_split,eqn:momentum:flux_interp_mod:numerator_bound} as well as the assumption $\volfracstag^{\pi,(n+1)}_f \ge \minfracval$ then yields the desired result.
\end{proof}

\section{Semi-discrete conservation of quadratic invariants}\label{sec:app:quadratic}
We will use the following short-hand notation for discrete integration
\begin{equation}\label{eqn:sec:app:quadratic:integrate}
  \sum_\setextrusion{F} \avarstag \defeq \sum_{f\in\mathcal{F}} |\omega_f|\avarstag_f, \quad \sum_\mathcal{G} \tvar \defeq \sum_{g\in\mathcal{G}} |\stagger\omega_g|\tvar_g.
\end{equation}
Whenever we refer to the adjoint of an operator, e.g. $\divh^T$, the use of the $L^2$ inner product induced by the previously defined integral functionals in~\cref{eqn:sec:app:quadratic:integrate} is implied.
We consider the \onefluid formulation.

Conservation of quadratic invariants is usually studied in semi-discrete form~\citep{Veldman2019,Verstappen2003}.
Our discretisation of~\cref{eqn:intro:conservation_eqn} is however based on approximate space-time integration, resulting in~\cref{eqn:momentum:onefluid:transport}, and we therefore initially do not have a semi-discrete equation to study.
We can however take the limit $\dt \rightarrow 0$ of~\cref{eqn:momentum:onefluid:transport}, and obtain a semi-discretisation via that way.
In this limit the volume flux can be written as
\begin{equation}
  \lim_{\dt \rightarrow 0} \volumefluxOne^{\pi,(n)} = a^{\pi,(n)} \onevelo^{(n)},
\end{equation}
and therefore the space-time integration approach discussed in~\cref{sec:momentum}, resulting in~\cref{eqn:momentum:onefluid:transport}, can be viewed as a specialised time integration of the following semi-discretisation
\begin{equation}\label{eqn:app:quadratic:semidisc}
  \frac{d}{dt} (\volfracstag \rho \avarstag)^{\pi} + \advectionstag{\massfluxOne^\pi}\avarstag^{\pi} = 0,
\end{equation}
where the mass flux is now given by
\begin{equation}
  \massfluxOne^\pi = \rho^\pi a^{\pi} \onevelo.
\end{equation}
Moreover, in the same limit we find that the LW flux interpolant~\cref{eqn:centred:flux_interp:interp_lw} reduces to a central spatial discretisation (see~\cref{fig:momentum:flux_interp:example}, in this limit the centroid of the DR coincides with the face centroid, and therefore $\tanginterp_1\avarstag$ will coincide with the upwind centred value of $\avarstag$)
\begin{equation}\label{eqn:app:quadratic:lwlimit}
  \lim_{\dt \rightarrow 0} \fluxinterpstag^{\mtext{LW}} \avarstag = \half \tanginterp_0\avarstag + \half\tanginterp_1\avarstag = \interpolanthalfstag{\avarstag},
\end{equation}
where the equal weight interpolant $\interpolanthalfstag: \mathcal{F}^h \rightarrow \mathcal{G}^h$ is defined as
\begin{equation}
  (\interpolanthalfstag{\avarstag})_g \defeq \half \sum_{f\in\mathcal{F}^\omega(g)} \avarstag_f.
\end{equation}

We will now show that the semi-discretisation~\cref{eqn:app:quadratic:semidisc} conserves kinetic energy if the LW interpolant is used.
For ease of notation we will omit the superscript $\pi$ on $\volfrac, \rho, \avar$ and $\massfluxOne$.
The temporal evolution of the kinetic energy density is given by
\begin{equation}\label{eqn:app:quadratic:kinetic_evo}
  \half\frac{d}{dt} (\rho \volfracstag \avarstag^2) = \half\frac{d}{dt} \squarepar{\frac{(\rho \volfracstag \avarstag)^2}{\rho \volfracstag}} = \avarstag \frac{d}{dt} (\rho \volfracstag \avarstag) - \half \avarstag^2 \frac{d}{dt} (\rho \volfracstag) = -\avarstag \advectionstag{\massfluxOne}\avarstag + \half \avarstag^2 \advectionstag{\massfluxOne}1,
\end{equation}
where we have used the quotient rule, product rule as well as~\cref{eqn:app:quadratic:semidisc} and again~\cref{eqn:app:quadratic:semidisc} with $\avarstag=1$.

Integration of the first term that appears in the operator in the right-hand side of~\cref{eqn:app:quadratic:kinetic_evo} results in
\begin{equation}\label{eqn:app:quadratic:energy_firstterm}
  -\sum_\setextrusion{F} \avarstag \advectionstag{\massfluxOne} \avarstag = -\sum_\setextrusion{F} \avarstag \stagger \divh\roundpar{(\interpolantflux \massfluxOne) (\fluxinterpstag \avarstag)} = \sum_\mathcal{G} (\stagger \gradh\avarstag) {(\interpolantflux \massfluxOne) (\fluxinterpstag \avarstag)},
\end{equation}
where we have substituted~\cref{eqn:mass:advection_stagdef} and used that the staggered gradient operator is the skew-adjoint of the staggered divergence operator.
At this point we need that the flux interpolant $\fluxinterpstag$ is given by the LW interpolant, which reduces to the equal weight interpolant $\interpolanthalfstag$ in the semi-discrete limit $\dt \rightarrow 0$.
This is needed because the staggered gradient $\stagger \gradh$ and the interpolant $\interpolanthalfstag$ satisfy the discrete equivalent of the following special case of the product rule
\begin{equation}\label{eqn:app:quadratic:product_rule}
  \avar \frac{d}{dx}\avar = \half \frac{d}{dx} \avar^2 \quad \xrightarrow[\text{equivalent}]{\text{discrete}} \quad (\interpolanthalfstag\avarstag) (\stagger \gradh \avarstag) = \half \stagger \gradh \avarstag^2.
\end{equation}
which follows from the algebraic relation (for $a, b \in \mathbb{R}$)
\begin{equation}
  \half (a+b) (b-a) = \half (b^2 - a^2).
\end{equation}
Substitution of~\cref{eqn:app:quadratic:product_rule} into~\cref{eqn:app:quadratic:energy_firstterm} (with $\fluxinterpstag=\interpolanthalfstag$) yields
\begin{equation}
  -\sum_\setextrusion{F} \avarstag \advectionstag{\massfluxOne} \avarstag = \half\sum_\mathcal{G}  (\interpolantflux \massfluxOne) (\stagger \gradh \avarstag^2) = -\half\sum_\setextrusion{F}  \avarstag^2 \stagger \divh(\interpolantflux \massfluxOne) = -\half\sum_\setextrusion{F}  \avarstag^2 \advectionstag{\massfluxOne}1,
\end{equation}
which coincides, up to the sign, with the second term in the right-hand side of~\cref{eqn:app:quadratic:kinetic_evo}.
Hence when combined with~\cref{eqn:app:quadratic:kinetic_evo} this shows that kinetic energy is indeed conserved
\begin{equation}
  \half\frac{d}{dt} \sum_\setextrusion{F} \rho \volfracstag \avarstag^2 = 0.
\end{equation}

  \bibliography{library}

\begin{thebibliography}{}

\bibitem[Arrufat et~al., 2021]{Arrufat2021}
Arrufat, T., Crialesi-Esposito, M., Fuster, D., Ling, Y., Malan, L., Pal, S.,
  Scardovelli, R., Tryggvason, G., and Zaleski, S. (2021).
\newblock {A mass-momentum consistent, Volume-of-Fluid method for
  incompressible flow on staggered grids}.
\newblock {\em Computers and Fluids}, 215:104785.

\bibitem[Bell et~al., 1989]{Bell1989}
Bell, J.~B., Colella, P., and Glaz, H.~M. (1989).
\newblock {A second-order projection method for the incompressible
  Navier--Stokes Equations}.
\newblock {\em Journal of Computational Physics}, 283:257--283.

\bibitem[Chenadec and Pitsch, 2013]{Chenadec2013}
Chenadec, V.~L. and Pitsch, H. (2013).
\newblock {A monotonicity preserving conservative sharp interface flow solver
  for high density ratio two-phase flows}.
\newblock {\em Journal of Computational Physics}, 249:185--203.

\bibitem[Cheny and Botella, 2010]{Cheny2010}
Cheny, Y. and Botella, O. (2010).
\newblock {The LS-STAG method: A new immersed boundary/level-set method for the
  computation of incompressible viscous flows in complex moving geometries with
  good conservation properties}.
\newblock {\em Journal of Computational Physics}, 229(4):1043--1076.

\bibitem[Crockett et~al., 2011]{Crockett2011}
Crockett, R.~K., Colella, P., and Graves, D.~T. (2011).
\newblock {A Cartesian grid embedded boundary method for solving the Poisson
  and heat equations with discontinuous coefficients in three dimensions}.
\newblock {\em Journal of Computational Physics}, 230(7):2451--2469.

\bibitem[Desjardins and Moureau, 2010]{Desjardins2010}
Desjardins, O. and Moureau, V. (2010).
\newblock {Methods for multiphase flows with high density ratio}.
\newblock {\em Center for Turbulence Research Proceedings of the Summer
  Program}, pages 313--322.

\bibitem[Desmons and Coquerelle, 2021]{Desmons2021}
Desmons, F. and Coquerelle, M. (2021).
\newblock {A generalized high-order momentum preserving (HOMP) method in the
  one-fluid model for incompressible two phase flows with high density ratio}.
\newblock {\em Journal of Computational Physics}, 437(April).

\bibitem[Dr{\"{o}}ge, 2007]{Droge2007}
Dr{\"{o}}ge, M. (2007).
\newblock {\em {Cartesian grid methods for turbulent flow simulation in complex
  geometries}}.
\newblock PhD thesis, University of Groningen.

\bibitem[Dr{\"{o}}ge and Verstappen, 2005]{Droge2005}
Dr{\"{o}}ge, M. and Verstappen, R. (2005).
\newblock {A new symmetry-preserving Cartesian-grid method for computing flow
  past arbitrarily shaped objects}.
\newblock {\em International journal for numerical methods in fluids},
  47(8-9):979--985.

\bibitem[Etienne et~al., 2018]{Etienne2018}
Etienne, S., Scolan, Y.-M., and Brosset, L. (2018).
\newblock {Numerical study of density ratio influence on global wave shapes
  before impact}.
\newblock In {\em ASME 2018 37th International Conference on Ocean, Offshore
  and Arctic Engineering (OMAE2018-78624)}.

\bibitem[Favrie et~al., 2014]{Favrie2014}
Favrie, N., Gavrilyuk, S., Nkonga, B., and Saurel, R. (2014).
\newblock {Sharpening diffuse interfaces with compressible flow solvers}.
\newblock {\em Open Journal of Fluid Dynamics}, 04(01):44--68.

\bibitem[Gerrits and Veldman, 2003]{gerrits2003dynamics}
Gerrits, J. and Veldman, A. (2003).
\newblock Dynamics of liquid-filled spacecraft.
\newblock {\em Journal of Engineering Mathematics}, 45(1):21--38.

\bibitem[Harvie and Fletcher, 2001]{Harvie2001}
Harvie, D.~J. and Fletcher, D.~F. (2001).
\newblock {A new volume of fluid advection algorithm: the defined donating
  region scheme}.
\newblock {\em International Journal for Numerical Methods in Fluids},
  35(1):151--172.

\bibitem[Hernandez et~al., 2008]{hernandez2008}
Hernandez, J., Lopez, J., Gomez, P., Zanzi, C., and Faura, F. (2008).
\newblock {A new volume of fluid method in three dimensions - Part I:
  Multidimensional advection method with face-matched flux polyhedra}.
\newblock {\em International Journal for Numerical Methods in Fluids},
  58:897--921.

\bibitem[Hirt and Nichols, 1981]{Hirt1981}
Hirt, C.~W. and Nichols, B.~D. (1981).
\newblock {Volume of fluid (VOF) method for the dynamics of free boundaries}.
\newblock {\em Journal of Computational Physics}, 39(1):201--225.

\bibitem[Ivey and Moin, 2017]{Ivey2017}
Ivey, C.~B. and Moin, P. (2017).
\newblock {Conservative and bounded volume-of-fluid advection on unstructured
  grids}.
\newblock {\em Journal of Computational Physics}, 350:387--419.

\bibitem[Kleefsman, 2005]{Kleefsman2005}
Kleefsman, T. (2005).
\newblock {\em {Water Impact Loading on Offshore Structures}}.
\newblock PhD thesis, Rijksuniversiteit Groningen.

\bibitem[Leveque, 1996]{Leveque1996a}
Leveque, R.~J. (1996).
\newblock {High-resolution conservative algorithms for advection in
  incompressible flow}.
\newblock {\em SIAM Journal on Numerical Analysis}, 33(2):627--665.

\bibitem[Leveque, 2004]{Leveque2004}
Leveque, R.~J. (2004).
\newblock {\em {Finite-Volume Methods for Hyperbolic Problems}}.
\newblock Cambridge University Press.

\bibitem[Lipnikov et~al., 2014]{Lipnikov2014}
Lipnikov, K., Manzini, G., and Shashkov, M. (2014).
\newblock {Mimetic finite difference method}.
\newblock {\em Journal of Computational Physics}, 257:1163--1227.

\bibitem[Liu et~al., 2000]{Liu2000}
Liu, X.-d., Fedkiw, R.~P., and Kang, M. (2000).
\newblock {A boundary condition capturing method for Poisson's equation on
  irregular domains}.
\newblock {\em Journal of Computational Physics}, 160:151--178.

\bibitem[Lopez et~al., 2004]{Lopez2004}
Lopez, J., Hernandez, J., Gomez, P., and Faura, F. (2004).
\newblock {A volume of fluid method based on multidimensional advection and
  spline interface reconstruction}.
\newblock {\em Journal of Computational Physics}, 195:718--742.

\bibitem[L{\'{o}}pez et~al., 2020]{Lopez2020}
L{\'{o}}pez, J., Hern{\'{a}}ndez, J., G{\'{o}}mez, P., Zanzi, C., and Zamora,
  R. (2020).
\newblock {VOFTools 5: An extension to non-convex geometries of calculation
  tools for volume of fluid methods}.
\newblock {\em Computer Physics Communications}, 252:107277.

\bibitem[Owkes and Desjardins, 2014]{Owkes2014}
Owkes, M. and Desjardins, O. (2014).
\newblock {A computational framework for conservative, three-dimensional,
  unsplit, geometric transport with application to the volume-of-fluid (VOF)
  method}.
\newblock {\em Journal of Computational Physics}, 270:587--612.

\bibitem[Owkes and Desjardins, 2017]{Owkes2017}
Owkes, M. and Desjardins, O. (2017).
\newblock {A mass and momentum conserving unsplit semi-Lagrangian framework for
  simulating multiphase flows}.
\newblock {\em Journal of Computational Physics}, 332:21--46.

\bibitem[Pilliod and Puckett, 2004]{Pilliod2004}
Pilliod, J.~E. and Puckett, E.~G. (2004).
\newblock {Second-order accurate volume-of-fluid algorithms for tracking
  material interfaces}.
\newblock {\em Journal of Computational Physics}, 199(2):465--502.

\bibitem[Preisig and Zimmermann, 2010]{Preisig2010}
Preisig, M. and Zimmermann, T. (2010).
\newblock {Two-phase free-surface fluid dynamics on moving domains}.
\newblock {\em Journal of Computational Physics}, 229(7):2740--2758.

\bibitem[Puckett et~al., 1997]{Puckett1997}
Puckett, E.~G., Almgren, A.~S., Bell, J.~B., Marcus, D.~L., and Rider, W.~J.
  (1997).
\newblock {A high-order projection method for tracking fluid interfaces in
  variable density incompressible flows}.
\newblock {\em Journal of Computational Physics}, 130(2):269--282.

\bibitem[Rider and Kothe, 1997]{Rider1997}
Rider, W.~J. and Kothe, D.~B. (1997).
\newblock {Reconstructing volume tracking}.
\newblock {\em Journal of Computational Physics}, 141:112--152.

\bibitem[Rudman, 1998]{Rudman1998}
Rudman, M. (1998).
\newblock {A volume-tracking method for incompressible multifluid flows with
  large density variations}.
\newblock {\em Numerical Methods in Fluids}, 378(6):357--378.

\bibitem[Saurel and Abgrall, 1999]{Saurel1999}
Saurel, R. and Abgrall, R. (1999).
\newblock {A multiphase Godunov method for compressible multifluid and
  multiphase flows}.
\newblock {\em Journal of Computational Physics}, 150(2):425--467.

\bibitem[Scardovelli and Zaleski, 2000]{Scardovelli2000}
Scardovelli, R. and Zaleski, S. (2000).
\newblock {Analytical relations connecting linear interfaces and volume
  fractions in rectangular grids}.
\newblock {\em Journal of Computational Physics}, 164:228--237.

\bibitem[Sweby, 1984]{Sweby1984}
Sweby, P. (1984).
\newblock {High resolution schemes using flux limiters for hyperbolic
  conservation laws}.
\newblock {\em SIAM Journal on Numerical Analysis}, 21(5):995--1011.

\bibitem[Tryggvason et~al., 2011]{Tryggvason2011}
Tryggvason, G., Scardovelli, R., and Zaleski, S. (2011).
\newblock {\em {Direct numerical simulations of gas-liquid multiphase flows}}.
\newblock Cambridge University Press.

\bibitem[{Van der Plas}, 2017]{VanderPlas2017}
{Van der Plas}, P. (2017).
\newblock {\em {Local Grid Refinement for Free-Surface Flow Simulations}}.
\newblock PhD thesis, Rijksuniversiteit Groningen.

\bibitem[Veldman, 2019]{Veldman2019}
Veldman, A. E.~P. (2019).
\newblock {A general condition for kinetic-energy preserving discretization of
  flow transport equations}.
\newblock {\em Journal of Computational Physics}, 398:108894.

\bibitem[Veldman et~al., 2007]{Veldman2007}
Veldman, A. E.~P., Gerrits, J., Luppes, R., Helder, J.~A., and Vreeburg, J.
  P.~B. (2007).
\newblock {The numerical simulation of liquid sloshing on board spacecraft}.
\newblock {\em Journal of Computational Physics}, 224(1):82--99.

\bibitem[Verstappen and Veldman, 2003]{Verstappen2003}
Verstappen, R. and Veldman, A. E.~P. (2003).
\newblock {Symmetry-preserving discretization of turbulent flow}.
\newblock {\em Journal of Computational Physics}, 187(1):343--368.

\bibitem[Vuk{\v{c}}evi{\'{c}} et~al., 2017]{Vukcevic2017}
Vuk{\v{c}}evi{\'{c}}, V., Jasak, H., and Gatin, I. (2017).
\newblock {Implementation of the ghost fluid method for free surface flows in
  polyhedral finite volume framework}.
\newblock {\em Computers and Fluids}, 153:1--19.

\bibitem[Weymouth and Yue, 2010]{Weymouth2010}
Weymouth, G.~D. and Yue, D.~K. (2010).
\newblock {Conservative Volume-of-Fluid method for free-surface simulations on
  Cartesian-grids}.
\newblock {\em Journal of Computational Physics}, 229(8):2853--2865.

\bibitem[Youngs, 1982]{Youngs1982}
Youngs, D.~L. (1982).
\newblock {Time-dependent multi-material flow with large fluid distortion.}
\newblock In {\em Numerical Methods in Fluid Dynamics}. Academic Press.

\bibitem[Zhang and Ding, 2019]{Zhang2019}
Zhang, Q. and Ding, L. (2019).
\newblock {Lagrangian flux calculation through a fixed planar curve for scalar
  conservation laws}.
\newblock {\em SIAM Journal on Scientific Computing}, 41(6):A3596--A3623.

\bibitem[Zuzio et~al., 2020]{Zuzio2020}
Zuzio, D., Orazzo, A., Estival{\`{e}}zes, J.~L., and Lagrange, I. (2020).
\newblock {A new efficient momentum preserving Level-Set/VOF method for high
  density and momentum ratio incompressible two-phase flows}.
\newblock {\em Journal of Computational Physics}, 410:109342.

\end{thebibliography}
\end{document}